\numberwithin{equation}{section}
\newtheorem{thm}{Theorem}
\newtheorem{definition}{Definition}
\title{A New Robust Network Slack Based Measure Model}
\author{ {Alka Arya} \\
	Operations Management \& Decision Sciences Area,\\
	Indian Institute of Management Kashipur, \\
	Kashipur, U.S. Nagar, \\
	Uttarakhand, 244713,  India.\\
	\texttt{alka1dma@gmail.com} \\
	\And
	\href{https://orcid.org/0000-0003-0553-8187}{\includegraphics[scale=0.06]{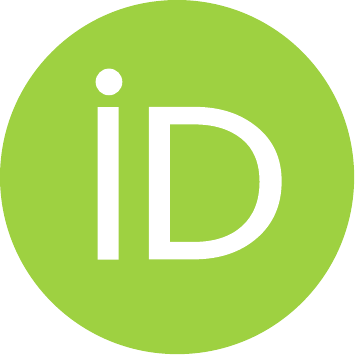}\hspace{1mm} Shubham Singh} \\
	Department of Mathematics,\\
	Indian Institute of Technology Kharagpur,\\
	Kharagpur, 721302, India. \\
	\texttt{shubham94iitkgp@gmail.com} \\
}
\begin{document}
\maketitle

\begin{abstract}
	In real-life challenges, unforeseen and unknown occurrences commonly influence the data values, which may affect the performance of the problems.  The performance of decision-making units (DMUs) is determined using the slack-based measure (SBM) model, which considers only crisp data values without uncertainty and is a black-box model. Many authors have used fuzzy SBM and stochastic SBM to deal with ambiguity and uncertainty, and many have used these approaches to deal with ambiguous and uncertain data. However, some ambiguous and uncertain data can not be taken as fuzzy logic and stochastic data due to the huge set of rules which must be given to construct a conceptual method. So, this paper tackles the uncertain data using robust optimization in which the input-output data are limited to remain within an uncertainty set, with extra constraints depending on the worst-case output for the uncertainty set and black-box is tackle with network SBM model. Thus, this paper proposes a robust network SBM model with negative, missing, and undesirable data that aims to maximize the efficiency same as traditional network SBM, and the constraint for a worst-case efficiency calculated by the uncertainty set its sustaining constraint while determining the efficiencies. Finally, the performances of Indian banks are computed, and then the results of the proposed models are compared to the crisp network SBM models and demonstrating how decision-makers can magnify the bank's performance of Indian banks.
\end{abstract}

\keywords{Data envelopment analysis\and network slack based measure \and robust optimization \and robust two-stage slack based measure \and Banks' efficiencies.}

Data uncertainty is unavoidable since the values are frequently influenced by unforeseen and unknown occurrences. Exogenous and endogenous data are two types of uncertain data \cite{lappas2018robust}. Exogenous uncertainty is rarely influenced by the decision-maker, whereas endogenous uncertainty is influenced by the decision maker's decisions, strategies, and judgments in a variety of application situations (see \cite{peeta2010pre}, \cite{edirisinghe2010optimized}, \cite{hassanzadeh2014robust}, \cite{hatami2021robustness}).  \cite{dantzig1955linear} proposed probabilistic constraint optimization, and  \cite{charnes1959chance} proposed stochastic programming, which may be traced back to the mid-1950s. These methods are based on the premise that the probability distributions of a random variable are perfectly defined. Furthermore, due to a lack of evidence in various real-world circumstances, the assumption is correct is unsure by the decision maker. Thereby, the literature has commonly been raised for the use of various ways to deal with uncertainty in quickly changing settings where stochastic models are ineffective. In terms of mathematics, uncertain optimization issues frequently confront two difficulties in practice: (i) an approach may not be feasible, and (ii) if it is, it may lead to a complete revenue (cost) that is significantly lower (higher) than the actual optimal value \cite{bertsimas2006robust}.  \cite{soyster1973convex} suggested a robust optimization framework for linear programming models in the early 1970s. In the context of unknown data, robust optimization seeks to develop a solution that is practical for all conceivable values of the parameters for achieving in the worst-case scenario \cite{hatami2021robustness}. Many real-life challenges require both immediate and long-term judgments. As a result, here-and-now decision variables are those that must be recognized right immediately, with no time to wait for more or complete information on uncertain parameters, and wait-and-see variables are those that can be recognized later when more information on uncertain parameters has been obtained \cite{yanikouglu2019survey}.

	The financial stability and growth of a country are dependent on finance. India is the world's second-largest country by population and geographically the world's seventh-largest territory. As a result of India's difficult economic situation, this study will analyze the performance of Indian banks using negative, incomplete, and undesirable data. Banks play a crucial role in growing India's economy in both urban and rural areas. Thereby, policymakers and planners are increasingly worried about the performance of the Indian banks. The operations of banks have undergone drastic changes. Economic growth occurs due to the liberalization of financial savings, their beneficial usage, and the transformation of many risks. Indian banks have seen significant changes in their operations due to privatization, globalization, and liberalization. In India, public sector banks confront a host of challenges, including high pricing and a lack of competition from private sector banks. In this technology era, the competition between the public sector (Government of India) and private sector banks is severe (governed by individuals). As a consequence, this study will calculate the performance of Indian banks.
	
A frontier approach to estimating the relative performance of decision-making units (DMUs) is Data Envelopment Analysis (DEA). The distance from the piece-wise linear frontier is used to calculate the performance of DMUs that generate multiple outputs by consuming multiple inputs in DEA. The DEA models were first suggested by  \cite{charnes1978measuring} to assess the relative efficiencies of DMUs. Constant returns to scale (CRS), indicated by  \cite{charnes1978measuring}, ignores the slacks when determining the efficiencies of DMUs.  \cite{tone2001slacks} suggested the input-oriented SBM and output-oriented SBM models to evaluate the performances of DMUs. The SBM model \cite{tone2001slacks} specifies the efficiencies and slacks of DMUs. As a result, the SBM model is more practical than the CRS. Due to the black-box nature of the SBM model, we cannot determine the internal structure of DMUs.

The black-box models were first proposed to assess the performance of DMUs using an overall system without using internal operations that used initial inputs to generate final DMU outputs, and this overall system was dubbed "black-box." We can't find the internal structure of DMUs using these black-box models. In certain real-world situations, such as banks, insurance firms, and businesses activities have two processes rather than one (see \cite{tavana2018new}, \cite{arya2021development}, \cite{ma2018evaluating}). We cannot determine the correct details about efficiencies using a black-box model, and some DMUs can be inefficient. A bank's overall efficiency is divided into two parts: operations and profitability efficiency.

According to \cite{wang1997use}, banks and industries have a two-stage profit process, with the capital collection being the first stage and investment being the second. This is why it's essential to look into the component or stage-by-stage process for determining DMU performance \cite{tavana2018new}. So, the explanation for any DMU's inefficiency can be determined. As a result, rather than using a black-box approach to evaluate DMU performance, the internal operation process (network) is more practical. The literature review on network DEA till 2014 was summarized by  \cite{kao2014network}. \cite{fare2000network} suggested a network DEA model with intermediate items and a heuristic approach for budget allocation.  \cite{maghbouli2014two} determined the efficiency of a Spanish airport Using a two-stage DEA model with undesirable data.  \cite{liu2015two} proposed a two-stage model with undesirable inputs, intermediates, and outputs for evaluating the performances and production possibility set (PPS) of China's Bank using the free disposal axiom.  \cite{mahdiloo2018modelling} calculated the technical, ecological, and process environmental efficiency of China's provinces using a proposed multi-objective two-stage DEA model.  \cite{mavi2019joint} suggested a two-stage DEA model focused on target programming to assess eco-efficiency and eco-innovation when dealing with undesirable and big data.  \cite{tajbakhsh2018evaluating} suggested a two-stage DEA model assess the output of fossil power plants. \cite{sotiros2019dominance} calculated the divisional additive efficiencies of DMUs using a network (mainly two-stage) method in DEA.

Some researchers consider deposits to be inputs (see \cite{das2009financial}, \cite{karimzadeh2012efficiency}) in banks, while others consider them to be outputs (see \cite{dash2009study}, \cite{laplante2015evaluation}). Deposits are regarded as intermediaries that play a dual role in determining a bank's results. Negative data is described as a loss, and some banks have missing data. For more details, researchers can visit the article of Kuosmanen \cite{kuosmanen2009data}. The existence of negative, missing, and undesirable data adds to the challenge. Despite the vast number of studies investigating the efficacy of two-stage systems, two-stage DEA models still consider non-negative data. There is no in-depth discussion of two-stage DEA models when negative data is present. In this study, the value for missing data is determined by using the imputation approach. Although negative values for inputs and outputs are becoming more common, negative values for intermediate variables are also permissible \cite{tavana2018new}. Negative data (like net profit, operating income) in DEA has also overgrown to assess the efficiencies of banks (see \cite{tavana2018new}, \cite{matin2011two}). In specific real-world applications, the outputs are in two forms: desirable and undesirable (see \cite{tavana2018new}). Desirable outputs are considered profitable/good for any DMU, whereas undesirable outputs are considered bad. As a result, undesirable outputs are viewed as inputs that must be reduced as much as possible during the manufacturing process. Since the idea of undesirable outputs was implemented, DEA has become a useful tool for assessing Bank's efficiency; Non-performing assets (NPAs) are regarded as undesirable outputs that must be reduced in tandem with inputs (see \cite{puri2014fuzzy}, \cite{nasseri2018fuzzy}) in DEA. NPAs avoid producing revenue in banks, and the bank's profit suffers as a result of their existence \cite{puri2017new}. Further, some inputs, intermediates, and outputs have no data value for a year due to various circumstances.

This paper also investigates when the input and output data of the SBM model are not known precisely.  In a real-world optimization problem, the most common reason for being uncertain data of the problems are given as follows: (i) Some parameters (returns, future needs, etc.) are not known precisely when the issue is addressed and are thus replaced by predictions. Therefore, these parameter entries are thus subject to prediction errors, (ii) Some of the parameters (contents associated with raw materials, parameters of the technological process) are difficult to quantify precisely. In reality, their values either follow some random variables or drift around the measured nominal values. These data are subject to measurement errors, (iii) Some of the information can not be implemented precisely as it is computed. The implementation mistakes that arise are the same as appropriate artificial data uncertainties. To describe imprecise data in DEA, three distinct types of methodologies have been proposed in the literature: fuzzy approaches, stochastic and robust optimization framework.

The uncertainty associated with the available input-output data is incorporated in the fuzzy DEA method by representing inputs and outputs as a membership function. However, solving fuzzy DEA with various membership functions is often quite tricky from a computational and practical point (see \cite{emrouznejad2014state}). If information of the input-output data follows some random variable with given probability density functions, then such problems undergo the stochastic DEA framework.  The chance-constrained programming technique is one of the most efficient and practical approaches to dealing with stochastic programming problems with probabilistic constraints that was initially proposed by \cite{charnes1959chance}. The most significant disadvantage of stochastic optimization is determining a random variable for the decision-maker is quite difficult  (see \cite{nasseri2018fuzzy}). For a comprehensive overview of stochastic optimization, one may refer to  \cite{ruszczynski2003stochastic},   \cite{birge2011introduction}, and  \cite{prekopa2013stochastic}. On the contrary, the Robust Optimization (RO) technique deals with uncertain optimization problems without making assumptions about the probability distribution functions. It is the most effective approach. The uncertain models take on an adversarial aspect, with the selected uncertain parameters from an uncertainty set having the worst impact on the model. Following that,  \cite{ben1998robust} studied robust convex optimization problems by taking into account the knowledge of uncertain data in the form of an ellipsoidal uncertainty set. Afterward, they approximate few significant general convex optimization problems, including linear programming, quadratically constrained programming, semi-definite programming, into the tractable formulations and solve them with an efficient technology such as polynomial-time interior-point methods. \cite{el1998robust} investigated robust semi-definite programming problems and established sufficient conditions to ensure that the robust solution is unique and continuous with respect to the unperturbed problem's data.  \cite{bertsimas2004price} studied robust linear optimization model with a budget uncertainty set and also discussed how to adjust the level of conservatism of the robust solutions in terms of probabilistic bounds of constraint violations.
Further, they extend their methods to solve discrete optimization problems. Further, \cite{ben1999robust} examined the robust counterpart of the linear programming problems with an ellipsoidal uncertainty set is computationally tractable, yielding a conic quadratic programming problem that can be solved in polynomial time. \cite{bertsimas2004robust} studied robust modeling of linear programming problems based on uncertainty sets given by the general norm.

The output of DMUs with uncertain data is determined using a robust optimization process. The robust optimization approach is used to solve a linear optimization problem with uncertain data. In this case, we use a robust optimization technique for creating the equivalents of robust of the two-stage SBM models to deal with extreme uncertainty in the data and make in-the-moment judgments. This method emphasizes tractability throughout the need to optimize this worst-case criterion while maintaining high degrees of robustness and objective values of high quality. This research proposes a set of static robust network SBM models for obtaining efficiencies in the presence of pricing uncertainty.   \cite{sadjadi2008data} proposed robust optimization in DEA and calculated the efficiencies of 38 Iranian electricity distribution firms, which was the first analysis of its kind in DEA.  \cite{sadjadi2011robust} proposed the super-efficiency DEA model and calculated the super-efficiencies of Iran's gas companies using robust optimization with an ellipsoidal uncertainty set.  \cite{omrani2013common} suggested a robust optimization in common weights in DEA and calculated the outputs of Iran's provincial gas companies.  \cite{lu2015robust} proposed a robust DEA model for determining DMU performance using a heuristic approach. \cite{toloo2019robust} presented robust and reduced robust CCR and BCC models with non-negative variables to predict the performance of 250 European banks. Recently, \cite{dehnokhalaji2021box} studied the DEA model, in which they considered that the information of input and output data is given in the form of box uncertainty set. A robust optimization approach is proposed to examine the performance measurement and ranking of DMUs with interval data. Therefore, a robust two-stage SBM model with negative, missing, and undesirable data could be proposed. The literature summary is provided in Table \ref{tab:supplyapp}.

%


	\begin{table}[!htb]
	\centering
	\caption{\textbf{Robust optimization in DEA}}\label{tab:supplyapp}
	\resizebox{\textwidth}{!}{
		\begin{tabular}{lllll}
			\hline
			Method  & Robust Approach & Application & Reference& Year \\\hline	
			BCC  & Ben-Tal and Nemirovski&  Iranian Electricity companies & \cite{sadjadi2008data} & 2008\\
			Interval CCR	 & Bertsimas and Sim & Empirical examples  & \cite{shokouhi2010robust}& 2010\\
			Super-efficiency CCR  & Bertsimas and Sim & Iranian Gas companies & \cite{sadjadi2011robust}& 2011\\
		CCR &  Bertsimas and Sim &  Iranian provincial gas companies & \cite{omrani2013common}& 2013\\
		Interval CCR &  Bertsimas and Sim &  Empirical examples & \cite{shokouhi2014consistent}& 2014\\
			BCC	& Ben-Tal and Nemirovski &   Vehicle routing problem & \cite{lu2015robust}& 2015\\
			Interval CCR	& Bertsimas and Sim &   Iranian banks & \cite{aghayi2016efficiency}& 2016\\
		Super-efficiency CCR	& Bertsimas and Sim &   Forest districts \& Iranian gas companies & \cite{arabmaldar2017new}& 2017\\
			BCC	& Bertsimas and Sim &   European banks & \cite{toloo2019robust}& 2019\\
		CCR	& Bertsimas and Sim &   Empirical examples & \cite{salahi2020new}& 2020\\
		CCR	& Bertsimas and Sim &   US manufacturing & \cite{hatami2021robustness}& 2021	\\
	CCR	& Ben-Tal and Nemirovski& East Virgina hospitals& \cite{dehnokhalaji2021box}&2021\\\hline
	\end{tabular}}
\end{table}	
The significant contributions of this paper are organized as follows: (1) this article investigates a two-stage SBM model in the uncertain environment using robust optimization (RO) theory. Additionally, it demonstrates how the model is used to calculate the stage efficiencies of homogeneous DMUs, as well as their overall efficiencies, (2) this article also determines the efficiencies of DMUs when the negative, missing, and undesirable data is presented, and a robust two-stage SBM model with negative, missing, and undesirable data is proposed, (3) three robust two-stage SBM models are offered with types of tractability in RO: ellipsoidal, polyhedral, budget uncertainty set, (4) this article analyses the relative efficiencies of Indian bank branches using robust two-stage SBM model to incorporate the proposed models.

The rest of the paper is laid out as follows. Section 2 provides some background information on basic SBM models, undesirable data negative data, missing data, two-stage SBM models, and robust optimization, which we use to propose the robust two-stage models. Section 3 proposes a robust two-stage SBM model to deal with missing, negative, and undesired data and examine the suggested models' features. We give a case study of Indian banks in Section 4 to show the applicability and efficacy of the proposed models and processes. In Section 5, we describe our findings.
	\section{Preliminaries}
	This section explains the basic SBM model, undesirable outputs, negative outputs, and robust optimization techniques.
	\subsection{\bf Basic SBM model} Assume that n homogeneous DMUs ($DMU_j; j=1,2,...,n$) are taken for measuring the performances. Let us suppose that each DMU utilizes m inputs to produce s outputs. Let $y_{rk};\,(r=1,2,...,s)$ be the $r^{th}$ output amount produced to utilize $x_{ik};\,(i=1,2,...,m)$ the $i^{th}$ input amount by $DMU_{k}$, shown in Figure \ref{fig:minputs}.
	\begin{figure}[!htb]
		\centering
		\includegraphics[width=0.45\linewidth]{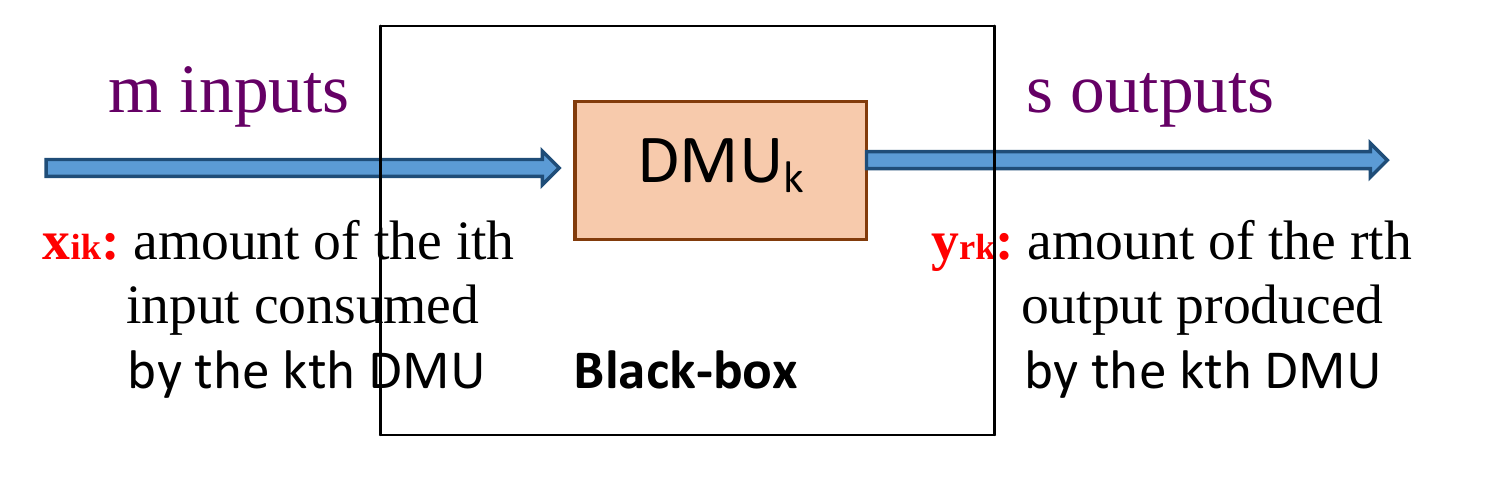}
		\caption{Graphical representation of black-box}
		\label{fig:minputs}
	\end{figure}
	
	The fractional SBM model \cite{tone2001slacks} for ${DMU_{k}}; \,\,k=1,2,...,n$ is given in Table \ref{tab:1},
	\begin{table}[!htb]
		\centering
		\caption{{SBM model}}
		\label{tab:1}
		\begin{tabular}{cc}
			\hline
			SBM: Fractional model & \,\,\,\,\,\,\,\,\,\,\,\,\,\,\,SBM: LP model\\\hline
			$\min P_{k}=\dfrac{1-\dfrac{1}{m}{\sum\limits_{i=1}^{m}{s_{ik}^-}/{x_{ik}}}}{1+\dfrac{1}{s}\sum\limits_{r=1}^{s}{s_{rk}^+}/{y_{rk}}}$ & \,\,\,\,\,\,\,\,\,\,\,\,\,\,\,$\max P_{k}=p+\dfrac{1}{s}{\sum\limits_{r=1}^{s}{S_{rk}^+}/{y_{rk}}}$\\
			subject to\,\, $ x_{ik}=\sum\limits_{j=1}^{n}{x_{ij}\lambda_j+s_{ik}^-}\,\, \forall i,$  & \,\,\,\,\,\,\,\,\,\,\,\,\,\,\,subject to $ p-\dfrac{1}{m}{\sum\limits_{i=1}^{m}{S_{ik}^-}/{x_{ik}}} =1,$ \\
			$\,\,\,\,\,\,\,\,\,\,\,\,y_{rk}=\sum\limits_{j=1}^{n}{y_{rj}\lambda_j-s_{rk}^+}\,\,\,\,\forall r,$ & \,\,\,\,\,\,\,\,\,\,\,\,\,\,\,$px_{ik}=\sum\limits_{j=1}^{n}{x_{ij}\lambda_j'+S_{ik}^-}\,\, \forall i,$ \\
			$\sum\limits_{j=1}^{n}{\lambda_j}=1,	$& \,\,\,\,\,\,\,\,\,\,\,\,\,\,\,$py_{rk}=\sum\limits_{j=1}^{n}{y_{rj}\lambda_j'-S_{rk}^+}\,\,\,\,\forall r,$\\
			$s_{ik}^-\geq 0\,\,\forall i,\,\,\,s_{rk}^+\geq0\,\,\forall r.$	&$\sum\limits_{j=1}^{n}{\lambda_j'}=p\, \, \forall j,\,\, p >0,$ \\
			& $S_{ik}^-=ps_{ik}^-\geq0\,\,\forall i,$\\
			& $S_{rk}^+=ps_{rk}^+\geq0\,\,\forall r,\,\,\lambda_j'={p}{\lambda_j}\geq 0\, \, \forall j$ .\\
			\\\hline
		\end{tabular}
	\end{table}
where $s^{-}_{ik}$ is the slack variable of $i^{th}$ input and $s^{+}_{rk}$ is the slack variable of the $r^{th}$ output of the $k^{th}$ DMU. Let $P_k^*$ be the optimal efficiency of SBM model for $DMU_k$. Then, $DMU_k$ is called SBM efficient if $P_k^*=1$. This condition is equivalent to $s^{-}_{ik}=0$ and $s^{+}_{rk}=0$, i.e., no output 	shortfalls and no input excesses in optimal solution, otherwise $DMU_k$ is SBM inefficient \cite{tone2001slacks}.
By normalizing the denominator, the fractional SBM model is turned into an LP SBM model. In a fractional SBM model with addition constraint, the denominator is maximized using the normalized approach while the numerator is equal to 1. The output-oriented SBM model is the name given to this LP SBM model. Researchers can read \cite{tone2001slacks} article  for further information about SBM.
\subsection{\bf Undesirable data}	
The premise in traditional DEA models is that inputs must be minimized and outputs must be maximized. It's worth mentioning, however, that processes can also produce undesirable consequences, such as Non-performing assets \cite{puri2014fuzzy}. Other applications, such as health care, banks, and business, may potentially produce undesirable outputs (see \cite{scheel2001undesirable}, \cite{lu2012data}, \cite{puri2014fuzzy}, \cite{tone2004dealing}). A DMU's efficiency is defined by a production process that takes $m$ inputs and produces $S$ outputs, with $s_1$ outputs being desirable (good) and $s_2$ outputs being undesirable (poor), with $s=s_1+s_2$. Let $x_{ij}(\forall i)$ represent the $m$ inputs to $DMU_k$, and $y_{r_1k}^D\,\, (r_1=1,2,...,s_1)$ and $y_{r_2k}^D\,\, (r_2=1,2,...,s_2)$ represent the $s_1$ desirable and $s_2$ undesirable outputs produced by $DMU_k$, respectively. The following is the SBM model with undesirable outputs (\cite{tone2004dealing}, \cite{apergis2015energy}):
\begin{small}
\begin{flalign}
&\min \rho_{k}=\dfrac{1-\dfrac{1}{m}{\sum\limits_{i=1}^{m}{s_{ik}^-}/{x_{ik}}}}{1+\dfrac{1}{s_1+s_2}(\sum\limits_{r_1=1}^{s_1}{s_{r_1k}^D}/{y_{r_1k}^D}+\sum\limits_{r_2=1}^{s_2}{s_{r_2k}^U}/{y_{r_2k}^U})} \nonumber&\\
&\mbox{ Subject to}\nonumber&\\
&{x_{ik}=\sum\limits_{j=1}^{n}{x_{ij}\lambda_j+s_{ik}^-}\,\, \forall i, \,\,\,
y_{r_1k}^D=\sum\limits_{j=1}^{n}{y_{r_1j}\lambda_j-s_{r_1k}^D}\,\,\,\,\forall r_1,\,\,\,y_{r_2k}^U=\sum\limits_{j=1}^{n}{y_{r_2j}\lambda_j+s_{r_2k}^U}\,\,\,\,\forall r_2,\,\,\,s_{ik}^-\geq 0\,\,\forall i,\,\,\,s_{r_1k}^D\geq0,\,\,s_{r_2k}^U\geq0\forall r.}\nonumber&
\end{flalign}
\end{small}
The $DMU_k$ is efficient when $\rho_{k}=1,\,s_{ik}^-=0,\, s_{r_1k}^D=0,\, s_{r_2k}^U=0$ \cite{tone2004dealing}. Researchers can read Tone's \cite{tone2004dealing} and Yang et al.'s article \cite{yang2018regional} for further information about undesirable SBM.
\subsection {\bf Negative data}
We employ the methodology proposed by  \cite{portela2004negative} to avoid issues arising from the existence of negative data while ensuring the usage of a meaningful directional vector. Their method is based on a modified version of the generic directional distance model, in which the direction chosen determines whether improvements in both inputs and outputs are sought, resulting in either the easiest or most difficult targets to achieve (\cite{portela2004negative}, \cite{tavana2018new}). Their method embraces negative data without needing any transformations and produces an efficiency measure that is similar to radially evaluations in classical DEA employing the concept of range of feasible improvements (\cite{portela2004negative}, \cite{tavana2018new}). \cite{portela2004negative} introduces the technical features of negative data, with an ideal point $I=(\max\limits_j{y_{1j}},....,\max\limits_j{y_{sj}},\min\limits_j{x_{1j}},...,\min\limits_j{x_{mj}},)$ and defined the directional vector $(R_j^x,R_j^y)=(R_{1j}^x,..., R_{mj}^x, R_{1j}^y,...,R_{sj}^y)$ as the direction from $DMU_{j_0}$ to the ideal point $I$, that is: $R_{ip}^x=x_{ip}-\min\limits_{j}{x_{ij}}$ and $R_{rp}^y=\max\limits_{j}{y_{rj}}-y_{rp}$. Researchers can read \cite{portela2004negative} article  for further information about negative data.

\subsection {\bf Missing data}
In many cases, coverage of potentially relevant input-output variables is poor, or enterprises fail to disclose all required information. In DEA applications, the problem of missing observations is a chronic sickness that affects both the quality and quantity of data \cite{kao2000data}. The immediate cure, of course, is to devote more time and effort to data collection. On the other hand, DEA applications often rely on non-experimental, observational data, as opposed to data gathered in laboratory experiments or field trials in the natural sciences. The majority of our work is on "found" data, material that has been acquired by someone else, frequently for entirely different goals \cite{kuosmanen2009data}. We almost always have to accept missing data as an unfortunate part of real-world data in these situations. Unbalanced data with blank entries are increasingly commonly used in panel data regression models. The treatment of missing data, on the other hand, has received only a few fleeting comments in the DEA literature \cite{kuosmanen2009data}. However, missing data are just as common in DEA as they are in regression analysis.  As a result, the conventional way to solving this problem is to remove blank entries from data matrices at random. Some writers openly disclose the exclusion of firms and/or variables from the data set. Still, it's reasonable to assume that many more do so in the pre-processing phase of the data analysis without disclosing the selection rationale. The issue is that there are often many different approaches to build balanced data, and the decision of which firms and factors to include in the data set can have a significant impact on the outcomes (\cite{kao2000data}, \cite{kuosmanen2009data}). The regression analysis approach, mice package in R, is used in this paper to transform missing data. More details on the mice package can be found in \cite{van2011mice}.

\subsection{\bf Two-stage SBM}	
Let us assume that the operation of a $DMU_j, \,j=1 \,\hbox{to}\, n$ with two-stages (systems). Suppose that the efficiency of first stage (system) is represented by $E_k^1$ which utilize the $m$ inputs, $x_{ij}$, to produce $d$ outputs (intermediates), $z_{dj}$, and the efficiency of second stage (system) is represented by $E_k^2$ which utilize $d$ inputs (intermediates) to produce $r$ outputs. Let $DMU_k,\,k=a \,\hbox{ to }\,n$ is the targeted DMU which efficiency is to be determined. Both the systems (stages) are connected in series. Suppose the input and output (intermediate) weights for first stage are $S_{ik}^-$ and $S_{dk}$ respectively, and the input (intermediate) and output weights for second stage are $S_{dk}$ and $S_{rk}^+$ respectively. Intermediate works in two ways, in first stage it is taken as outputs and in second stage it is taken as inputs. The two-stage SBM model (\cite{tone2004dealing}, \cite{apergis2015energy}) is as follows:

	\begin{minipage}[b]{0.4\textwidth}
	\begin{flalign}\label{mod001}
		&\max F_{k}^1= p_1+{\dfrac{1}{D}\left({\sum\limits_{d=1}^{D}{{S_{dk}}}/{z_{dk}}}\right)}\nonumber&\\
		&\mbox{subject to}&\nonumber\\
		&p_1-\dfrac{1}{m}{\sum\limits_{i=1}^{m}{{S_{ik}^-}}/{x_{ik}}}=1,\nonumber&\\
		&p_1 x_{ik}=\sum\limits_{j=1}^{n}{x_{ij}\lambda_j'}+{{S_{ik}^-}},\,\,\forall i,\nonumber&\\
		& p_1 z_{dk} =\sum\limits_{j=1}^{n}{z_{dj}\lambda_j'}-{{S_{dk}}},\,\,\,\,\forall d,\,\,\,\,\,\nonumber&\\
		& \sum\limits_{j=1}^{n}{\lambda_j'}=p_1,\,\,p_1 >0;\,\,\lambda_j'\geq 0, \,\forall j,\nonumber&\\
		&{S_{ik}^-}\geq 0,\,\,\forall i,\,\,\,{S_{dk}},\,\,\forall d\geq 0.&
		\end{flalign}
		\end{minipage}
		\hfill
		\begin{minipage}[b]{0.48\textwidth}
			\begin{flalign}\label{mod002}
	&\max F_{k}^2= p_1+\dfrac{1}{s}{\left(\sum\limits_{r=1}^{s}{{S_{rk}^+}}/{y_{rk}}\right)} \nonumber&\\
	&\mbox{subject to}&\nonumber\\
	&p_1-{\dfrac{1}{D}\left({\sum\limits_{d=1}^{D}{{S_{dk}}}/{z_{dk}}}\right)}=1,\nonumber&\\
	& p_1 z_{dk} =\sum\limits_{j=1}^{n}{z_{dj}\lambda_j'}+{{S_{dk}}},\,\,\,\,\forall d,\nonumber&\\
	& p_1 y_{rk} =\sum\limits_{j=1}^{n}{y_{rj} \lambda_j'}-{{S_{rk}^+}},\,\,\,\,\forall r_1,\nonumber&\\
	& \sum\limits_{j=1}^{n}{\lambda_j'}=p_1,\,\,p_1 >0;\,\,\lambda_j'\geq 0, \,\forall j,\nonumber&\\
	& {S_{rk}^+}\geq 0,\,\,\forall r\,\,\,{S_{dk}}\geq 0,\,\,\forall d.&
	\end{flalign}
	\end{minipage}\\

The notations are same as previously explained. Let $E_k^1=1/{F_k^1}$, and $E_k^2=1/{F_k^2}$. $DMU_k$ is called SBM efficient in two-stage method if $E_k^{1*}=1$ and $E_k^{2*}=1$, otherwise, SBM inefficient, where $(.)^{*}$ is the optimal value of $(.)$.
\subsection{\bf Robust optimization}
	In this section, we give the first brief introduction about the robust optimization problem, a class of convex optimization problems in which we take the somewhat agnostic view that our problem data is not accurate. In robust optimization, we are willing to accept a sub-optimal solution for the nominal data values to ensure that the solution remains feasible and close to optimal solutions when the data changes. A concern with such an approach is that it might be too conservative.
	
The general mathematical formulation of a linear optimization problem is as follows:

	\begin{eqnarray}\label{ar1}
	  &&  {\mbox{min}} ~~\sum_{j}^{n} c_{j} x_{j}\\
	&& {\hbox{subject to  }} \sum_{j}^{n} a_{ij} x_{j} \le b_{i}, ~~\hbox{where}~~ (a_{ij}, b_{i} )\in \mathcal{U}_{i}~~\forall~i,\\\label{ar2},
	&& x_{j} \ge 0, ~~\forall~j,
	\end{eqnarray}
where $c\in \mathcal{R}^{n}$ is cost parameter, $x \in \mathcal{R}^{n}$ is decision variables, $a_{ij}\in \mathcal{R}^{m \times n}$ is coefficient matrix, $ b\in\mathcal{R}^{m}$ and $\mathcal{U}_{i} \in \mathcal{R}^{L}$ is uncertainty sets, which, for our purposes, will always be convex and compact. In general, the robust model can not be solved unambiguously, because many classes of models admit efficient solutions. In the above linear optimization model \eqref{ar1}-\eqref{ar2}, we consider that the cost parameter $ c$ is fixed. Then, the feasible set of the problem \eqref{ar1}-\eqref{ar2} is defined as:$
    X(\mathcal{U})= \left\{x \Big|  \sum_{j}^{n} a_{ij} x_{j} \le b_{i},  (a_{ij}, b_{i} )\in \mathcal{U}_{i}~~\forall~i. \right\}
$\\
Here, a constraint-wise uncertainty set has been considered. In order to approximate the set of constraints into the tractable formulation, first, we transform it into the robust counterpart, which is given as follows:
\begin{eqnarray}\label{ar3}
	&& \sum_{j}^{n} a_{ij}(\xi) x_{j} \le b_{i}(\xi), ~~\hbox{where}~~ \xi \in \mathcal{Z}_{i},~~\forall~i,
\end{eqnarray}
where $\xi \in \mathcal{R}^{L}$ is possible deviations and $\mathcal{Z} \subset \mathcal{U}$ is primitive uncertainty set. Now, by defining the affine function of uncertain parameter $a_{ij}(\xi)=a_{ij}^{0}+\sum_{\ell=1}^{L} \xi_{\ell} a_{ij}^{\ell}$ and $b_{i}(\xi)=b_{i}^{0}+\sum_{\ell}^{L} \xi_{\ell} b_{i}^{\ell}$ where $a_{ij}^{0}, b_{i}^{0}$ are the nominal value and $a_{ij}^{\ell}, b_{i}^{\ell}$ are the possible deviations, to transform into tractable formulation of \eqref{ar3} for any possible realization of $\xi$ in the given uncertainty set $\mathcal{U}$ solution remains feasible, Mathematically, it can be expressed as:
\begin{equation}\label{ar4}
    \sum_{j}^{n} a_{ij}^{0} x_{j}+ \max_{\xi \in \mathcal{Z}_{i}} \sum_{j}^{n} \sum_{\ell=1}^{L} \xi_{\ell}\left( a_{ij}^{\ell} x_{j}- b_{i}^{\ell}\right) \le b_{i}^{0},
\end{equation}

The equation \eqref{ar4} known as sub-problem of equation \eqref{ar3}, that must be solved. The complexity of addressing the Robust Optimization issue is determined by its structure. Here, three different types of uncertainty sets have been considered with some parameters to control the size of the uncertainty set.
The structure of constructing these uncertainty sets is discussed in \cite{bertsimas2011theory}.

\subsubsection*{\textbf{Ellipsoidal uncertainty set}}
Ellipsoidal uncertainty sets have been referred to by  \cite{ben1999robust} as well as  \cite{el1997robust, el1998robust}. Controlling the size of these ellipsoidal sets interprets a budget of uncertainty that the decision-maker selects to conveniently trade-off between robustness and performance. Mathematically, it is represented as:
\begin{equation*}
    \mathcal{Z}_{i}=\{\xi|~~\|\xi\|_{2}\le \Omega_{i}, ~\forall~i,\}
\end{equation*}
where $\Omega_{i} \in [0, \sqrt{L}]$ is adjustable robust parameter and $L$ is the number of possible deviations. In this set, all the uncertain parameters within the radius $\Omega$ from the nominal values of the parameter are considered.


\subsubsection*{\textbf{Polyhedral uncertainty set}}
Polyhedral uncertainty is a particular case of ellipsoidal uncertainty that arises from the intersection of a finite number of ellipsoidal cylinders that is described in \cite{ben1999robust}. Mathematically, the polyhedral uncertainty set is defined as:
\begin{equation*}
    \mathcal{Z}_{i}=\{\xi|~~D_{i}^{T}\xi+q_{i} \ge 0 ~\forall~i,\}
\end{equation*}
 where $D_{i}\in \mathcal{R}^{L \times K}$ is deviations matrix of each constraint and $q_{i}\in\mathcal{R}^{K}$.



\subsubsection*{\textbf{Budget uncertainty set}}
\cite{bertsimas2004price} exploit this duality with a family of polyhedral sets that express a budget of uncertainty in terms of cardinality constraints, which is opposed to the size of an ellipsoid. In this set, they control the number of parameters that are allowed to deviate from their nominal values and provide a trade-off between the optimality and the robustness to parameter perturbation.

In budget uncertainty set, each element of coefficient matrix vary in some interval about their nominal value, i.e., $ a_{ij}\in [a_{ij}^{0}-\hat a_{ij},\,a_{ij}^{0}+\hat a_{ij}]$, where $a_{ij}^{0}$ is the nominal value and $\hat a_{ij}$ is half length of the confidence interval. In the original model of \cite{soyster1973convex}, rather than protect against the case when every parameter can deviate, we allow at most $\Gamma_{i} \in[0,~n]$ coefficients to deviate. The non-negative number $\Gamma_{i}$ represents the budget parameter in budget uncertainty set for every $i$-th constraint of the equation \eqref{ar4}.
If $\Gamma_{i}=0$, it means there is no coefficients parameter are deviate. In this case, equation \eqref{ar4} is similar to its nominal constraints, and if $\Gamma_{i}=n$, then all the coefficients parameters have deviated around their nominal values. In this case, uncertainty set $\mathcal{U} $ is defined by:
\begin{equation}
    \mathcal{U}_{i} = \left\{ a_{ij}\Big| a_{ij}=a_{ij}^{0}+\eta_{ij}\hat a_{ij}, \sum_{j}^{n} \frac{a_{ij}-a_{ij}^{0}}{\hat a_{ij}} \le \Gamma_{i}, ~\forall ~i\right\}
\end{equation}
where, $\eta_{ij}$ is the scaled deviation defined by $\eta_{ij}=(a_{ij}-a_{ij}^{0})/\hat a_{ij}$, which in support set $[-1,1] \subset \mathcal{Z}_{i}$. Therefore, $\mathcal{Z}_{i}$ becomes:
\begin{eqnarray}
\mathcal{Z}_{i}= \left\{ \eta_{ij} \Big| \sum_{j=1}^{n} \eta_{ij} \le \Gamma_{i}, ~-1\le \eta_{ij}\le 1,~~~\forall ~i,j\right\}.
\end{eqnarray}

	\section{Proposed robust network SBM model}	
Assume that the operation of a  $DMU_j\,(j=1,2,3,...,n)$ with two sub-systems, as can be seen in Figure \ref{fig:network2stage}. Assume that we want to evaluate the performance of a group of n homogeneous DMUs ($DMU_j,\,j=1,2,...,n$). Assume each DMU uses $m$ inputs to generate $D$ outputs in the first stage. Let $x_{ij};\,i = 1,2,...,m,$ be the amount of the $i$th input utilized and $z_{dj};\,\,d = 1,2,...,d,$ and $y_{rj}; \, r = 1,2,...,s$ be the amount of the $d$th and $r$th outputs respectively produced by $DMU_{j}$. In second stage, assume that each DMU utilizes $m_2$ and $d$ inputs to produce $S=s_1+s_2$ outputs. Let $x_{i_2j};\,i_2 = 1,2,...,m_2,$ and $z_{dj}$ be the amount of the $i_2$th and $d$th input utilized and $y_{r_1j}^D;\,r_1 = 1,2,...,s_1,$ and $y_{r_2j}^U;\,r_2 = 1,2,...,s_2,$ be the amount of the $s_1$th and $s_2$th desirable and undesirable final outputs produced respectively by $DMU_{j}$. $z_{dj}$ has dual role, it is output in first stage and input in second stage and known as intermediate. The first stage (Stage 1) is then linked to second stage (Stage 2) in series. This two-stage SBM model is also known as SBM series system.	The graphical representation is given in Figure \ref{fig:network2stage}.
	\begin{figure}[!htb]
		\centering
		\includegraphics[width=0.5\linewidth]{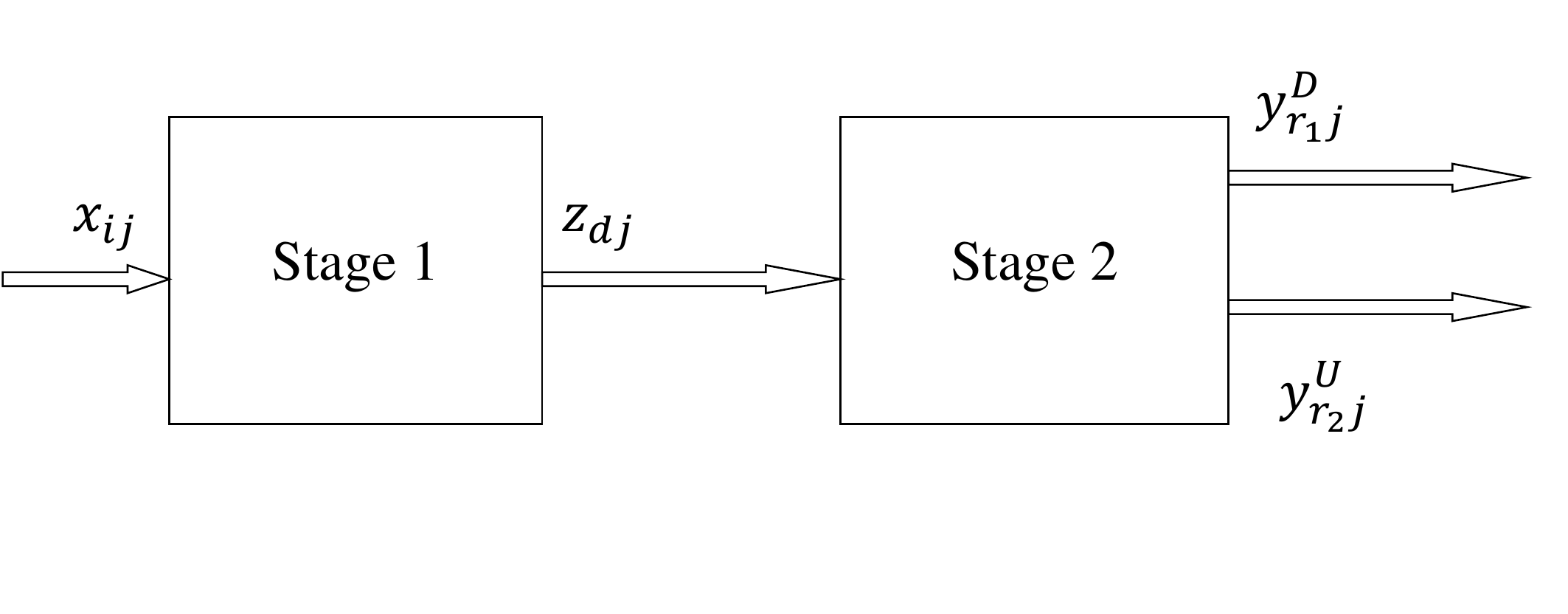}
		\caption{Graphical representation of two-stage}
		\label{fig:network2stage}
	\end{figure}
\\	The first stage LP SBM model for ${DMU_{k}},\,k=1,2,3,...,n$ is given in Model \ref{mod1},

	SBM model for Stage 1:
	\begin{flalign}\label{mod1}
	&\max F_{k}^1= p_1+{\dfrac{1}{D}\left({\sum\limits_{d=1}^{D}{{S_{dk}}}/{z_{dk}}}\right)}\nonumber&\\
	&\mbox{subject to } \Bigg\{p_1-\dfrac{1}{m}{\sum\limits_{i=1}^{m}{{S_{ik}^-}}/{x_{ik}}}=1,\,\,\,p_1 x_{ik}=\sum\limits_{j=1}^{n}{x_{ij}\lambda_j'}+{{S_{ik}^-}},\,\,\forall i,\,\,	p_1 z_{dk} =\sum\limits_{j=1}^{n}{z_{dj}\lambda_j'}-{{S_{dk}}},\,\,\,\,\forall d,\nonumber& \\
	& \sum\limits_{j=1}^{n}{\lambda_j'}=p_1,\,\,p_1 >0;\,\,\lambda_j'\geq 0, \,\forall j, \,\,\,{S_{ik}^-}\geq 0,\,\,\,{S_{dk}}\geq 0.\Bigg\}&
	\end{flalign}
	where $S^{-}_{ik}$, $S_{dk}$ and $S^{+}_{rk}$ are the slack variables corresponding to $i$th input, $d$th intermediate and $r^{th}$ output respectively for $DMU_k$.
	\begin{definition}
		Let $\theta_{k}^1=1/F_{k}^1$. $\theta_{k}^1 $ is called the first stage SBM (FSSBM) efficiency (FSSBME) of $DMU_{k}$. Let $\theta_{k}^{1*}$ be the optimal value of $\theta_{k}^1$. Then $DMU_{k}$ is called FSSBM-efficient if $\theta_{k}^{1*}  = 1$; otherwise, $DMU_{k}$ is called FSSBM-inefficient. Model \ref{mod1} is the proposed FSSBM (PFSSBM) model.
	\end{definition}
	The second stage LP SBM model for ${DMU_{k}},\,k=1,2,...,n$ is given in Model \ref{mod2}, where the output data is in desirable and undesirable form.
	\begin{flalign}\label{mod2}
	&\max F_{k}^2= p_1+\dfrac{1}{s_1+s_2}{\left(\sum\limits_{r_1=1}^{s_1}{{S_{r_1k}^+}}/{y_{r_1k}^D}-\sum\limits_{r_2=1}^{s_2}{{S_{r_2k}^+}}/{y_{r_2k}^U}\right)} \nonumber&\\
	&\mbox{subject to  }\Bigg\{p_1-{\dfrac{1}{D}\left({\sum\limits_{d=1}^{D}{{S_{dk}}}/{z_{dk}}}\right)}=1,\,\,\, p_1 z_{dk} =\sum\limits_{j=1}^{n}{z_{dj}\lambda_j'}+{{S_{dk}}},\,\,\,\,\forall d,\,\,\, p_1 y_{r_1k}^D =\sum\limits_{j=1}^{n}{y_{r_1j}^D \lambda_j'}-{{S_{r_1k}^+}},\,\,\,\,\forall r_1,\nonumber&\\
	& p_1 y_{r_2k}^U =\sum\limits_{j=1}^{n}{y_{r_2j}^U \lambda_j'}+{{S_{r_2k}^+}},\,\,\,\,\forall r_2,\,\,\, \sum\limits_{j=1}^{n}{\lambda_j'}=p_1,\,\,p_1 >0;\,\,\lambda_j'\geq 0, \,\forall j, \,\,\,{S_{r_1k}^+}\geq 0,\,\,\,{S_{r_2k}^+}\geq 0,\,\,\,{S_{dk}}\geq 0.\Bigg\}&
	\end{flalign}
	where $S^{-}_{i_2k}$, $S_{dk}$, $S^{+}_{r_1k}$ and $S^{+}_{r_2k}$ are the slack variables corresponding to $i_2$th input, $d$th intermediate, $r_1$th desirable output and $r_2$th undesirable output respectively for $DMU_k$.
	\begin{definition}
		Let $\theta_{k}^2=1/F_{k}^2$. $\theta_{k}^2 $ is called the Second stage SBM (SSSBM) efficiency (SSSBME) of $DMU_{k}$. Let $\theta_{k}^{2*}$ be the optimal value of $\theta_{k}^2$. Then $DMU_{k}$ is called SSSBM-efficient if $\theta_{k}^{2*}  = 1$; otherwise, $DMU_{k}$ is called SSSBM-inefficient. Model \ref{mod2} is the proposed SSSBM (PSSSBM) model.
	\end{definition}

	\subsection{Equality constraints in uncertainty}
	In robust optimization, if constraints have equal sign in any linear programming (LP) then the region may be feasible or infeasible (\cite{ben2009robust}, \cite{gorissen2015practical}). LP SBM model nclude the uncerainty in the normalization constraint when fractional SBM model is transformed into LP SBM model using normalized the denominator (The denominator of fractional SBM model is equal to 1). So, the uncertainty analysis in FSSBM and SSSBM models include the uncertainty in the normalization constraint (first constraint) in Models \ref{mod1} and \ref{mod2} respectively. If the data is uncertain in FSSBM and SSSBM models then the robust optimization is not suitable for Models \ref{mod1} and \ref{mod2}. The equality constraint in Model \ref{mod1} has uncertain values for robust optimization \cite{gorissen2015practical}.

	
	\begin{flalign}\label{mod3}
	&\max = p_1+{\dfrac{1}{D}\left({\sum\limits_{d=1}^{D}{{S_{dk}}}/{z_{dk}}}\right)}\nonumber&\\
	&\mbox{subject to  }\Bigg\{p_1-\dfrac{1}{m}{\sum\limits_{i=1}^{m}{{S_{ik}^-}}/{x_{ik}}}=t,\,\,\,p_1 x_{ik}=\sum\limits_{j=1}^{n}{x_{ij}\lambda_j'}+{{S_{ik}^-}},\,\,\forall i,\,\,\,p_1 z_{dk} =\sum\limits_{j=1}^{n}{z_{dj}\lambda_j'}-{{S_{dk}}},\,\,\,\,\forall d,\,\,\,\sum\limits_{j=1}^{n}{\lambda_j'}=p_1,\nonumber&\\
	& p_1 >0;\,\,\lambda_j'\geq 0, \,\forall j, \,\,\,{S_{ik}^-}\geq 0,\,\,\,{S_{dk}}\geq 0.\Bigg\}&
	\end{flalign}
$(S_{ik}^{-*},\,S_{dk},\,\lambda_j^{'*})$ is an optimal solution of Model \ref{mod1} if and only if $(t S_{ik}^{-*},\,t S_{dk},\,t \lambda_j^{'*})$ is an optimal solution of Model \ref{mod3}.

	\begin{thm}
		The equivalent model of Model \ref{mod1} is given below:
		\begin{flalign}\label{mod4}
		&\max = p_1+{\dfrac{1}{D}\left({\sum\limits_{d=1}^{D}{{S_{dk}}}/{z_{dk}}}\right)}\nonumber&\\
		&\mbox{subject to   } \Bigg\{p_1-\dfrac{1}{m}{\sum\limits_{i=1}^{m}{{S_{ik}^-}}/{x_{ik}}}\leq t,\,\,\,p_1 x_{ik}=\sum\limits_{j=1}^{n}{x_{ij}\lambda_j'}+{{S_{ik}^-}},\,\,\forall i,\,\,\, p_1 z_{dk} =\sum\limits_{j=1}^{n}{z_{dj}\lambda_j'}-{{S_{dk}}},\,\,\,\,\forall d,\,\,\, \sum\limits_{j=1}^{n}{\lambda_j'}=p_1,\,\,\nonumber&\\
		&p_1 >0;\,\,\lambda_j'\geq 0, \,\forall j, \,\,\,{S_{ik}^-}\geq 0,\,\,\,{S_{dk}}\geq 0.\Bigg\}&
		\end{flalign}
	\end{thm}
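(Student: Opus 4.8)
The plan is to prove the equivalence by the standard Charnes–Cooper relaxation argument: I would show that the inequality-constrained Model~\ref{mod4} has the same optimal value and the same optimal efficiency $\theta_k^1=1/F_k^1$ as the equality-constrained Model~\ref{mod3}, and then invoke the scaling correspondence stated immediately before the theorem (an optimal solution of Model~\ref{mod1} scales by $t$ into an optimal solution of Model~\ref{mod3}) to transfer the conclusion back to Model~\ref{mod1}. First I would note the trivial direction: Model~\ref{mod4} is obtained from Model~\ref{mod3} by relaxing $p_1-\frac{1}{m}\sum_i S_{ik}^-/x_{ik}=t$ to $p_1-\frac{1}{m}\sum_i S_{ik}^-/x_{ik}\le t$, so we are maximizing over a larger feasible region and the optimal value of Model~\ref{mod4} is at least that of Model~\ref{mod3}. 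The entire content is therefore the reverse inequality, and this reduces to proving that the first constraint is \emph{active} at every optimal solution of Model~\ref{mod4}.

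To establish bindingness I would exploit the cone (homogeneity) structure of the remaining data. Writing $g=p_1-\frac{1}{m}\sum_i S_{ik}^-/x_{ik}$, observe that the objective $F_k^1$, the expression $g$, and the three balance/normalization equalities (whose right-hand sides are all $0$) are each positively homogeneous of degree one in the variables $(p_1,S_{ik}^-,S_{dk},\lambda_j')$, while the sign constraints and $p_1>0$ are preserved under multiplication by any $\alpha>0$. Hence if $(p_1^*,\dots)$ is feasible with value $F_k^{1*}$, then $\alpha(p_1^*,\dots)$ is feasible for every $\alpha>0$ satisfying $\alpha g^*\le t$, with objective value exactly $\alpha F_k^{1*}$. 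Suppose, toward a contradiction, that an optimal solution had $g^*<t$. Since $F_k^{1*}\ge p_1^*>0$, scaling by $\alpha=t/g^*>1$ would yield a feasible point with strictly larger objective, contradicting optimality. Thus $g^*=t$ at every optimum, every optimizer of Model~\ref{mod4} is feasible for Model~\ref{mod3}, and the two optimal values coincide; combined with the given scaling relation this shows $\theta_k^1$ is unchanged and that Model~\ref{mod4} is indeed equivalent to Model~\ref{mod1}.

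I expect the only delicate point to be the hypothesis $g^*>0$ that the scaling step silently uses, i.e.\ ruling out the degenerate cone apex $g^*=0$. I would dispatch this directly from the input-balance equations $p_1 x_{ik}=\sum_j x_{ij}\lambda_j'+S_{ik}^-$: non-negativity forces $S_{ik}^-\le p_1 x_{ik}$, so $g\ge 0$ always, with $g=0$ only if $\sum_j x_{ij}\lambda_j'=0$ for all $i$. Under the mild and standard assumption that no DMU has an all-zero input vector (strictly positive input data), this is incompatible with $\lambda_j'\ge 0$ and $\sum_j\lambda_j'=p_1>0$, so $g>0$ on the whole feasible set and the scaling argument goes through. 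The remainder is routine homogeneity bookkeeping, so making this positivity — and the data assumption that guarantees it — explicit is the main obstacle I would flag.
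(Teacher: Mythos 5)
Your proof is correct, but it takes a genuinely different route from the paper's. The paper establishes the equivalence of Model \ref{mod3} and Model \ref{mod4} through LP duality: it writes down the dual of each (Models \ref{mod5} and \ref{mod6}), observes that the two duals differ only in the sign restriction $\theta \ge 0$ on the dual variable attached to the normalization constraint, and then argues that at any dual optimum $\theta^{*}>0$ must hold anyway, so the added sign restriction is vacuous, the duals coincide, and hence so do the primals; by complementary slackness, $\theta^{*}>0$ is exactly the dual shadow of your primal claim that $p_1-\frac{1}{m}\sum_{i} S_{ik}^{-}/x_{ik}\le t$ binds at every optimum. You instead work entirely in the primal: the relaxation direction is immediate, and you exploit the degree-one positive homogeneity of the objective and of the zero-right-hand-side balance constraints under $(p_1,\lambda',S^{-},S)\mapsto\alpha(p_1,\lambda',S^{-},S)$ to show that an optimum with slack $g^{*}<t$ could be rescaled by $\alpha=t/g^{*}>1$ into a feasible point of strictly larger value (since $F_k^1\ge p_1>0$), forcing activeness. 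Your route is more elementary and arguably more rigorous: it avoids writing duals altogether, it works at the level of values so it does not presuppose attainment (relevant here since $p_1>0$ is an open constraint), and it makes explicit the positivity hypothesis on the input data (guaranteeing $g>0$ on the feasible set) that the paper's terse and somewhat garbled claim that ``$\theta^{*}\le 0$ is not possible'' silently relies on — an assumption that is standard for Tone-type SBM models, the paper treating negative data separately via the Portela et al.\ transformation, so flagging it is a virtue rather than a gap. What the paper's duality route buys in exchange is the explicit dual formulations \ref{mod5}--\ref{mod6}, which are the natural objects when one subsequently derives robust counterparts of the constraints, whereas your scaling argument better exposes \emph{why} the relaxation is exact, namely the conic structure induced by the Charnes--Cooper transformation.
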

	\begin{proof}
		We have proved that Model \ref{mod1} is equivalent to Model \ref{mod3}. Now, we will show that Model \ref{mod3} is equivalent to Model \ref{mod4}. The dual of Model \ref{mod3} is given in \ref{mod5} and the dual of Model \ref{mod4} is given in \ref{mod6}.\\
	\begin{minipage}[b]{0.42\textwidth}
	\begin{flalign}\label{mod5}
		&\min = t \theta &\nonumber\\
		&\mbox{subject to}&\nonumber\\
		& \sum\limits_{i=1}^{m} u_{ik}+{\dfrac{\theta}{{m}x_{ik}}}\leq 0,\nonumber&\\
		& \sum\limits_{d=1}^{D} v_{dk}-{\dfrac{1}{D}{{z_{dk}}}}\geq 0,\nonumber&\\
		& \sum\limits_{i=1}^{m} u_{ik}{x_{ij}}+\sum\limits_{d=1}^{d} v_{dk} {z_{dj}}+a \leq 0,\nonumber&\\
		& \theta+\sum\limits_{i=1}^{m} u_{ik}{x_{ik}}+\sum\limits_{d=1}^{d} v_{dk} {z_{dk}}+a \geq 1,\nonumber&\\
		&u_{ik}, \,\,\,\,v_{dk},\,\,\theta,\,\,a \mbox{ are unrestricted in sign}.&
		\end{flalign}
		\end{minipage}
		\hfill
		\begin{minipage}[b]{0.48\textwidth}
	\begin{flalign}\label{mod6}
		&\min = t \theta &\nonumber\\
		&\mbox{subject to}&\nonumber\\
		& \sum\limits_{i=1}^{m} u_{ik}+{\dfrac{\theta}{{m}x_{ik}}}\leq 0,\nonumber&\\
		& \sum\limits_{d=1}^{d} v_{dk}-{\dfrac{1}{D{z_{dk}}}}\geq 0,\nonumber&\\
		& \sum\limits_{i=1}^{m} u_{ik}{x_{ij}}+\sum\limits_{d=1}^{D} v_{dk} {z_{dj}}+a \leq 0,\nonumber&\\
		& \theta+\sum\limits_{i=1}^{m} u_{ik}{x_{ik}}+\sum\limits_{d=1}^{D} v_{dk} {z_{dk}}+a \geq 1,\nonumber&\\
		&\theta \geq 0,\,\,u_{ik}, \,\,\,\,v_{dk},\,\,a \mbox{ are unrestricted in sign}.&
		\end{flalign}
		\end{minipage}\\

		The optimal solution of the Models \ref{mod5} and \ref{mod6} is $\theta^*$. Since $\theta^* \leq 0$ is not possible in Model \ref{mod5} because $u_{ik}*=0$ in first and last constraint of Model \ref{mod5} because it violates the first constraint. Therefore, $\theta^* > 0$. Thus, dual Models \ref{mod5} and \ref{mod6} are equivalent, so the primal Models \ref{mod1} and \ref{mod3} are equivalent.

	\end{proof}

	\begin{definition}
		Let the positive parameter t be fixed at 1, the Model \ref{mod02} is the appropriate FSSBM model for the robust optimization:
		\begin{flalign}\label{mod02}
		&\max z_k^1= p_1+{\dfrac{1}{D}\left({\sum\limits_{d=1}^{d}{{S_{dk}}}/{z_{dk}}}\right)}\nonumber&\\
		&\mbox{subject to  }\Bigg\{\,\,\,p_1-\dfrac{1}{m}{\sum\limits_{i=1}^{m}{{S_{ik}^-}}/{x_{ik}}}\leq 1,\,\,\,p_1 x_{ik}-\sum\limits_{j=1}^{n}{x_{ij}\lambda_j'}-{{S_{ik}^-}}\geq 0,\,\,\forall i,\,\,\, p_1 z_{dk} - \sum\limits_{j=1}^{n}{z_{dj}\lambda_j'}+{{S_{dk}}}\leq 0,\,\,\,\,\forall d,\nonumber&\\
		& p_1-\sum\limits_{j=1}^{n}{\lambda_j'}=0,\,\,p_1 >0;\,\,\lambda_j'\geq 0, \,\forall j, \,\,\,{S_{ik}^-}\geq 0,\,\,\,{S_{dk}}\geq 0.\Bigg\}&
		\end{flalign}
		In Model \ref{mod02} all the input, intermediate and output data are uncertain.
	\end{definition}
	
	\begin{definition}
For $k=1,2,3,...,n$, the following Model (\ref{mod04}) is equal to Model (\ref{mod02}):
		\begin{flalign}\label{mod04}
		&\max w_k^1  \nonumber&\\
		&\mbox{subject to  }&\nonumber\\
		& \Bigg\{ w_k^1-p_1-{\dfrac{1}{D}\left({\sum\limits_{d=1}^{d}{{S_{dk}}}/{z_{dk}}}\right)} \leq 0,\,\,\,p_1-\dfrac{1}{m}{\sum\limits_{i=1}^{m}{{S_{ik}^-}}/{x_{ik}}}\leq 1,\,\,\,p_1 x_{ik}-\sum\limits_{j=1}^{n}{x_{ij}\lambda_j'}-{{S_{ik}^-}}\geq 0,\,\,\forall i,\,\,\, \nonumber&\\
		& p_1 z_{dk} - \sum\limits_{j=1}^{n}{z_{dj}\lambda_j'}+{{S_{dk}}}\leq 0,\,\,\,\,\forall d,\,\,\,p_1-\sum\limits_{j=1}^{n}{\lambda_j'}= 0,\,\,p_1 >0;\,\,\lambda_j'\geq 0, \,\forall j, \,\,\,{S_{ik}^-}\geq 0,\,\,\,{S_{dk}}\geq 0.\Bigg\}&
		\end{flalign}
	\end{definition}
	
	\begin{definition}
		The following Model (\ref{mod004}) is equivalent to Model (\ref{mod04}) for $k=1,2,3,...,n$:
		\begin{flalign}\label{mod004}
		&\max w_k^1  \nonumber&\\
		&\mbox{subject to  }\Bigg\{ w_k^1-p_1-{\dfrac{1}{D}\left({{\sum\limits_{d=1}^{D}{b_{dk}}}}\right)} \leq 0,\,\,\,p_1-\dfrac{1}{m}{\sum\limits_{i=1}^{m}{{a_{ik}}}}\leq 1,\,\,\,-(p_1 x_{ik}-\sum\limits_{j=1}^{n}{x_{ij}\lambda_j'}-{{S_{ik}^-}})\leq 0,\,\,\forall i,\nonumber&\\
		& p_1 z_{dk} - \sum\limits_{j=1}^{n}{z_{dj}\lambda_j'}+{{S_{dk}}}\leq 0,\,\,\,\,\forall d,\,\,\,{z_{dk}}{b_{dk}}-{S_{dk}} \leq 0,\,\,\,\,\forall d,\,\,\,{x_{ik}}{a_{ik}}-{S_{ik}^-} \leq 0,\,\,\,\,\forall i,\,\,\,p_1-\sum\limits_{j=1}^{n}{\lambda_j'}= 0,\,\,\,p_1 >0;
	\nonumber&\\
	&\lambda_j'\geq 0, \,\forall j, \,\,\,{S_{ik}^-}\geq 0,\,\,\,{S_{dk}}\geq 0.\Bigg\}&
		\end{flalign}
		Thus, Model (\ref{mod004}) is equivalent to Model (\ref{mod1}) for $DMU_k$. The Model (\ref{mod004}) is the appropriate first stage model to apply robust optimization in which input, and intermediate data are uncertain.
	\end{definition}
	
	\begin{definition} \label{def1}
		Let $\mbox{Eff}_k^1=1/w_k^1$ in Model \ref{mod004}. $\mbox{Eff}_k^1$ is also called the first stage SBM (FSSBM) efficiency (FSSBME) of $DMU_{k}$. Let $\mbox{Eff}_k^{1*}$ be the optimal value of $\mbox{Eff}_k^1$. Then $DMU_{k}$ is called FSSBM-efficient if $\mbox{Eff}_k^{1*}  = 1$; otherwise, $DMU_{k}$ is called FSSBM-inefficient.
	\end{definition}
	
	\begin{thm}\label{Thm.effi1}
	If $(p_1^{*}, \lambda_j^{*'}S_{ik}^{*-}S_{dk}^{*})$ is the optimal solution of the model \eqref{mod004}, then $w_{k}^{1} \ge 1$ and consequently $\mbox{Eff}_k^1 \in (0,~1]$.
	\end{thm}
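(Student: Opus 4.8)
The plan is to establish the bound $w_k^1 \ge 1$ by exhibiting one explicit feasible point of the maximization problem \eqref{mod004} at which the objective equals $1$. Since \eqref{mod004} is a maximization, the existence of a feasible point with value $1$ forces the optimal value $w_k^1$ to be at least $1$, and the assertion $\mbox{Eff}_k^1\in(0,1]$ will then follow at once from $\mbox{Eff}_k^1 = 1/w_k^1$ (Definition \ref{def1}).

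First I would take the \emph{self-evaluation} point, in which $DMU_k$ is projected onto itself. Concretely, set $p_1 = 1$, $\lambda_k' = 1$ and $\lambda_j' = 0$ for $j\neq k$, all slacks $S_{ik}^- = 0$ and $S_{dk} = 0$, all auxiliary variables $a_{ik} = 0$ and $b_{dk} = 0$, and the epigraph variable $w_k^1 = 1$. Because $\lambda_k'=1$ gives $\sum_{j} x_{ij}\lambda_j' = x_{ik}$ and $\sum_{j} z_{dj}\lambda_j' = z_{dk}$, the input constraint $-(p_1 x_{ik}-\sum_j x_{ij}\lambda_j'-S_{ik}^-)\le 0$ becomes $0\le 0$, the intermediate constraint $p_1 z_{dk}-\sum_j z_{dj}\lambda_j'+S_{dk}\le 0$ becomes $0\le 0$, and the normalization $p_1-\sum_j\lambda_j'=0$ holds. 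The two coupling constraints $x_{ik}a_{ik}-S_{ik}^-\le 0$ and $z_{dk}b_{dk}-S_{dk}\le 0$ reduce to $0\le 0$; the relaxed first constraint reads $p_1-\tfrac1m\sum_i a_{ik}=1\le 1$; and the objective constraint reads $w_k^1-p_1-\tfrac1D\sum_d b_{dk}=1-1-0=0\le 0$. Thus the point is feasible with objective value exactly $1$, so $w_k^1\ge 1$.

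The step requiring the most care is the treatment of the auxiliary variables $a_{ik},b_{dk}$ that appear when one passes from \eqref{mod02}/\eqref{mod04} to \eqref{mod004}, together with the fact that the first constraint has been \emph{relaxed} from the equality of \eqref{mod1} to the inequality $\le 1$. Because of this relaxation one cannot simply invoke $p_1\ge 1$ directly in \eqref{mod004}; the feasible-point argument above is precisely what circumvents this, since the one-sided coupling constraints make setting $a_{ik}=b_{dk}=0$ admissible. As an independent cross-check I would also appeal to the equivalence of \eqref{mod004} with \eqref{mod1} recorded just after its statement (and to the Theorem for \eqref{mod4}): in \eqref{mod1} the equality $p_1-\tfrac1m\sum_i S_{ik}^-/x_{ik}=1$ with $S_{ik}^-\ge 0,\ x_{ik}>0$ gives $p_1\ge 1$, whence the objective $F_k^1=p_1+\tfrac1D\sum_d S_{dk}/z_{dk}\ge p_1\ge 1$, re-deriving the bound.

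Finally, for the ``consequently'' clause: the hypothesis that $(p_1^{*},\lambda_j^{*'},S_{ik}^{*-},S_{dk}^{*})$ is an optimal solution supplies a finite optimal value, so $1\le w_k^1<\infty$. Therefore $\mbox{Eff}_k^1=1/w_k^1$ satisfies $0<\mbox{Eff}_k^1\le 1$, i.e.\ $\mbox{Eff}_k^1\in(0,1]$, which completes the argument.
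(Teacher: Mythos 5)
Your proposal is correct and follows essentially the same approach as the paper: both proofs exhibit the self-evaluation feasible point ($p_1=1$, $\lambda_k'=1$, all other $\lambda_j'=0$, slacks and auxiliary variables $a_{ik},b_{dk}$ set to zero) for model \eqref{mod004} and conclude $w_k^1\ge 1$, hence $\mbox{Eff}_k^1\in(0,1]$. If anything, your direct argument --- reading the objective value $1$ off the feasible point and invoking maximization --- is logically tighter than the paper's intermediate manipulation (its step combining $p_1x_{ik}\ge w_k^1x_{ik}$ with the third constraint has the inequality pointing the wrong way as written), and your use of finiteness of the optimal value to get $\mbox{Eff}_k^1>0$ is cleaner than the paper's ad hoc exclusion of $w_k^1=0$.
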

\begin{proof}
	    For the fix $k$, assume that $p_{1}^{*}=1,~ a_{ik}^{*}=S_{ik}^{*-}=0,~\forall~ i,~ b_{dk}^{*}=S_{dk}^{*}=0,~\forall~ d,~ \lambda_{j}^{'*}=1 (j=k)$ otherwise $\lambda_{j}^{'*}=0$, then model \eqref{mod004} always gives a feasible solution.  Now, putting the value of $b_{dk}=0$ and multiplying $x_{ik}$ in the first constraint of the model \eqref{mod004} as:
	    \begin{eqnarray}\label{mod4.4}
	    p_{1} x_{ik} \ge w_{k}^{1}x_{ik}
	    \end{eqnarray}
	    where $p_{1}\ge 0$ is considered as the arbitrary value. From equation \eqref{mod4.4} and third constraint of the model \eqref{mod004}, we have
	    \begin{eqnarray}\label{mod4.5}
	    w_{k}^{1}x_{ik}-\sum_{j=1}^{n} x_{ij} \lambda_{j}^{'}-S_{ik}^{-} \ge 0.
	    \end{eqnarray}
	    Now, $\lambda_{j}^{'*}=1 (j=k)$ and $S_{ik}^{-}=0$  then equation \eqref{mod4.5} becomes: $(w_{k}^{1}-1) x_{ik}   \ge  0, \,\,   \implies~~ w_{k}^{1}   \ge   1.
	  $\\
	   Thus, by the definition \eqref{def1}, $\mbox{Eff}_k^1=\frac{1}{ w_{k}^{1}} \le 1.$
	
	  Again, if we consider $w_{k}^{1}=0,$ then  the  equation \eqref{mod4.5} becomes $-x_{ik} \ge 0$ which is not possible. Thus $\mbox{Eff}_k^1 \in (0,~1].$
	
	\end{proof}
Next, we provide the second stage SBM model as follows:
	\begin{flalign}\label{05}
	&\max z_{k}^2= p_2+\dfrac{1}{s_1+s_2}{\left({\sum\limits_{r_1=1}^{s_1}{{S_{r_1k}^+}}/{y_{r_1k}^D}}-{\sum\limits_{r_2=1}^{s_2}{{S_{r_2k}^+}}/{y_{r_2k}^U}}\right)}\nonumber&\\
	&\mbox{subject to   } \Bigg\{p_2-{\dfrac{1}{D}\left({\sum\limits_{d=1}^{D}{{S_{dk}}}/{z_{dk}}}\right)}=1,\,\,\,p_2 z_{dk} -\sum\limits_{j=1}^{n}{z_{dj}\lambda_j'}-{{S_{dk}}}=0,\,\,\,\,\forall d,\,\,\,p_2 y_{r_1k}^D -\sum\limits_{j=1}^{n}{y_{r_1j}^D\lambda_j'}+{{S_{r_1k}^+}}=0,\,\,\,\,\forall r_1,\nonumber&\\
	& -p_2 y_{r_2k}^U +\sum\limits_{j=1}^{n}{y_{r_2j}^U\lambda_j'}+{{S_{r_2k}^+}}=0,\,\,\,\,\forall r_1,\,\,\,p_2-\sum\limits_{j=1}^{n}{\lambda_j'}=0,\,\,p_2 >0;\,\,\lambda_j'\geq 0, \,\forall j, \,\,\,{S_{ik}^-}\geq 0,\,\,\,{S_{r_1k}^+}\geq 0,\,\,\,{S_{r_2k}^+}\geq 0.\Bigg\}&
	\end{flalign}
For $k=1,2,...,n,$ the following model \ref{07} is equivalent to model \ref{05}:
	
	\begin{flalign}\label{07}
	&\max w_k^2& \nonumber\\
	&\mbox{subject to  }\Bigg\{w_k^{2}-p_2-\dfrac{1}{s_1+s_2}{\left({\sum\limits_{r_1=1}^{s_1}{{S_{r_1k}^+}}/{y_{r_1k}^D}}-{\sum\limits_{r_2=1}^{s_2}{{S_{r_2k}^+}}/{y_{r_2k}^U}}\right)}\leq 0,\,\,\,p_2-{\dfrac{1}{D}\left({\sum\limits_{d=1}^{d}{{S_{dk}}}/{z_{dk}}}\right)}\leq 1,\nonumber&\\
	&p_2 z_{dk} -\sum\limits_{j=1}^{n}{z_{dj}\lambda_j'}-{{S_{dk}}}\geq 0,\,\,\,\,\forall d,\,\,\, p_2 y_{r_1k}^D -\sum\limits_{j=1}^{n}{y_{r_1j}^D\lambda_j'}+{{S_{r_1k}^+}}\leq 0,\,\,\,\,\forall r_1,\,\,\, -p_2 y_{r_2k}^U +\sum\limits_{j=1}^{n}{y_{r_2j}^U\lambda_j'}+{{S_{r_2k}^+}}\leq 0,\,\,\,\,\forall r_1,\,\,\,\nonumber&\\
	&p_2-\sum\limits_{j=1}^{n}{\lambda_j'}= 0,\,\,p_2 >0;\,\,\lambda_j'\geq 0, \,\forall j, \,\,\,{S_{r_1k}^+}\geq 0,\,\,\,{S_{r_2k}^+}\geq 0. \Bigg\}&
	\end{flalign}
The following model \ref{09} is equivalent to model \ref{07} for $k=1,2,...,n$:	
	\begin{flalign}\label{09}
	&\max w_k^2& \nonumber\\
	&\mbox{subject to  } \Bigg\{w_k^{2}-p_2-\dfrac{1}{(s_1+s_2)}{\left(\sum\limits_{r_1=1}^{s_1}{{c_{r_1k}}}-\sum\limits_{r_1=1}^{s_2}{{c_{r_2k}}}\right)}\leq 0,\,\,\,p_2-{\dfrac{1}{D}\left({{\sum\limits_{d=1}^{D}{{b_{dk}}}}}\right)}\leq 1,\nonumber&\\
	& p_2 z_{dk} -\sum\limits_{j=1}^{n}{z_{dj}\lambda_j'}-{{S_{dk}}} \geq 0,\,\,\,\,\forall d,\,\,\, p_2 y_{r_1k} -\sum\limits_{j=1}^{n}{y_{r_1j}\lambda_j'}+{{S_{r_1k}^+}}\leq 0,\,\,\,\,\forall r_1,\,\,\, -p_2 y_{r_2k} +\sum\limits_{j=1}^{n}{y_{r_2j}\lambda_j'}+{{S_{r_2k}^+}}\leq 0,\,\,\,\,\forall r_2,\nonumber&\\
	& p_2-\sum\limits_{j=1}^{n}{\lambda_j'}= 0,\,\, {y_{r_1k}}{c_{r_1k}}-{S_{r_1k}^+} \leq 0,\,\, {y_{r_2k}}{c_{r_2k}}-{S_{r_2k}^+} \leq 0,\nonumber&\\
	&{z_{dk}}{b_{dk}}-{S_{dk}} \leq 0,\,\,\,p_2 >0;\,\,\lambda_j'= 0, \,\forall j, \,\,\,{S_{dk}}\geq 0,\,\,\,{S_{r_1k}^+}\geq 0,\,\,\,{S_{r_2k}^+}\geq 0.\Bigg\}&
	\end{flalign}
Thus, Model \ref{09} is equivalent to model \ref{mod2} for $DMU_k$. The model \ref{09} is the appropriate second stage model to apply robust optimization in which intermediate and output data are uncertain.
\begin{definition}
		Let $\mbox{Eff}_k^2=1/w_k^2$ in Model \ref{09}. $\mbox{Eff}_k^2$ is also called the second stage SBM (SSSBM) efficiency (SSSBME) of $DMU_{k}$. Let $\mbox{Eff}_k^{2*}$ be the optimal value of $\mbox{Eff}_k^2$. Then $DMU_{k}$ is called SSSBM-efficient if $\mbox{Eff}_k^{2*}  = 1$; otherwise, $DMU_{k}$ is called SSSBM-inefficient.
	\end{definition}	
\begin{definition}
The overall efficiency, $\mbox{Eff}_k^{O}$, of $DMU_k$ is determined using first and second stage efficiencies $\mbox{Eff}_k^{O*}=\mbox{Eff}_k^{1*}\times \mbox{Eff}_k^{2*}$. $DMU_{k}$ is called efficient if $\mbox{Eff}_k^{O*}  = 1$, i.e. iff $\mbox{Eff}_k^{2*}=1$ and $\mbox{Eff}_k^{2*}=1$; otherwise, $DMU_{k}$ is called inefficient.
	\end{definition}	
	
	\begin{thm}\label{Thm.effi2}
	If $(p_2^{*}, \lambda_j^{*'},~ S_{dk}^{*},~S_{r_{1}k}^{*+},~S_{r_{2}k}^{*+})$ is the optimal solution of the model \eqref{09}, then $w_{k}^{2} \ge 1$ and consequently $\mbox{Eff}_k^2 \in (0,~1]$.
	\end{thm}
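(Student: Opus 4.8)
The plan is to mirror the argument used for Theorem \ref{Thm.effi1}, transporting it from the input side of stage~1 to the intermediate (input) side of stage~2. First I would exhibit an explicit feasible point of model \eqref{09}: take $p_2=1$, $\lambda_k'=1$ with $\lambda_j'=0$ for $j\neq k$, and set every slack $S_{dk},S_{r_1k}^{+},S_{r_2k}^{+}$ together with every auxiliary variable $b_{dk},c_{r_1k},c_{r_2k}$ equal to zero. A one-line check confirms each constraint: the intermediate balance gives $p_2 z_{dk}-\sum_j z_{dj}\lambda_j'-S_{dk}=z_{dk}-z_{dk}-0=0$, the two output balances vanish identically, the normalisation $p_2-\sum_j\lambda_j'=0$ holds, and $p_2-\tfrac{1}{D}\sum_d b_{dk}=1\le 1$. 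This establishes that \eqref{09} is feasible, so an optimal solution exists.

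Next I would derive the lower bound $w_k^2\ge 1$ exactly as in \eqref{mod4.4}--\eqref{mod4.5}. Setting the output auxiliaries $c_{r_1k}=c_{r_2k}=0$, the first constraint of \eqref{09} reduces to $w_k^2\le p_2$; multiplying by the intermediate $z_{dk}\ge 0$ yields $p_2 z_{dk}\ge w_k^2 z_{dk}$, the stage-2 analogue of \eqref{mod4.4}. Combining this with the intermediate balance constraint $p_2 z_{dk}-\sum_j z_{dj}\lambda_j'-S_{dk}\ge 0$ gives $w_k^2 z_{dk}-\sum_j z_{dj}\lambda_j'-S_{dk}\ge 0$, the analogue of \eqref{mod4.5}. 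Inserting $\lambda_k'=1$ (others zero) and $S_{dk}=0$ collapses this to $(w_k^2-1)z_{dk}\ge 0$, and since $z_{dk}>0$ I conclude $w_k^2\ge 1$. By the definition $\mbox{Eff}_k^2=1/w_k^2$ this forces $\mbox{Eff}_k^2\le 1$.

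Finally, to pin down the open lower endpoint I would rule out $w_k^2=0$: substituting $w_k^2=0$ into the relation $(w_k^2-1)z_{dk}\ge 0$ produces $-z_{dk}\ge 0$, which is impossible whenever the intermediate value is strictly positive. Hence $w_k^2>0$, and therefore $\mbox{Eff}_k^2\in(0,1]$, completing the proof.

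I expect the main obstacle to be bookkeeping rather than anything conceptual. The chief subtlety is the sign convention attached to the undesirable outputs: their contribution enters the objective with a minus sign and the associated balance constraint (the fifth constraint of \eqref{09}) carries the opposite orientation, so I must check that setting $c_{r_2k}=0$ genuinely removes those terms without flipping an inequality. A secondary point, inherited from the proof of Theorem \ref{Thm.effi1}, is making the combination of the first constraint with the intermediate balance fully rigorous; it is cleanest to read the whole argument as exhibiting a feasible point that attains $w_k^2=1$, so that the maximisation immediately yields optimal value $\ge 1$, rather than as a chain of inequalities on an arbitrary optimum. I would also state explicitly the standing positivity assumption $z_{dk}>0$ on the intermediate data, since the strictness of the lower endpoint in $(0,1]$ depends on it.
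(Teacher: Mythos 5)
Your proposal is correct and follows essentially the same route as the paper: the paper's proof of Theorem \ref{Thm.effi2} is just the remark that it is ``similar to Theorem-\ref{Thm.effi1}'', and you carry out exactly that transcription --- the feasible point $p_2=1$, $\lambda_k'=1$, all slacks and auxiliaries zero, followed by the $(w_k^2-1)z_{dk}\ge 0$ chain with $z_{dk}$ playing the role of $x_{ik}$. Your closing remarks (reading the argument as exhibiting a feasible point attaining $w_k^2=1$ so that maximisation gives $w_k^{2*}\ge 1$, and making the positivity assumption $z_{dk}>0$ explicit) in fact tighten a looseness present in the paper's own proof of Theorem \ref{Thm.effi1}, which manipulates an ``optimal'' solution while fixing its variable values by hand.
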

	\begin{proof}
	  The proof of this theorem is similar to Theorem-\ref{Thm.effi1}.
	\end{proof}
	
	\subsection{Robust model of two-stage SBM}
	Robust optimization is often used when the data information is given in the form of an uncertainty set. In the SBM model \eqref{mod004}, we consider the affine function of the input parameters $x_{ik}, x_{ij}, z_{dk}$ and $z_{dj}$ are $x_{ik} = x_{ik}^{0}+\sum_{\ell=1}^{L}\zeta_{\ell}x_{ik}^{\ell}$, $x_{ij} = x_{ij}^{0}+\sum_{\ell=1}^{L}\zeta_{\ell}x_{ij}^{\ell}$, $z_{dk} = z_{dk}^{0}+\sum_{\ell=1}^{L}\zeta_{\ell}z_{dk}^{\ell}$ and $z_{dj} = z_{dj}^{0}+\sum_{\ell=1}^{L}\zeta_{\ell}z_{dj}^{\ell}$ where $x_{ik}^{0}, x_{ij}^{0}, z_{dk}^{0}, z_{dj}^{0}$ are the nominal data and $x_{ik}^{\ell}, x_{ij}^{\ell}, z_{dk}^{\ell}, z_{dj}^{\ell}$ are possible deviations, and $\zeta \in \mathcal{R}^{L}$ is the perturbation vector. Thus, the uncertainty sets for the fix $k$ are defined as:
	\begin{eqnarray}
	\mathcal{U}_{x} =\left\{ (x_{ik}, x_{ij}) ~|~ x_{ik}^{0}+\sum_{\ell=1}^{L}\zeta_{\ell}x_{ik}^{\ell}, ~ x_{ij}^{0}+\sum_{\ell=1}^{L}\zeta_{\ell}x_{ij}^{\ell}, ~~\zeta \in \mathcal{Z}_{i} ~\forall~i\right\}
	\end{eqnarray}
	\begin{eqnarray}
	\mathcal{U}_{d} =\left\{ (z_{dk}, z_{dj}) ~|~ z_{dk}^{0}+\sum_{\ell=1}^{L}\zeta_{\ell}z_{dk}^{\ell}, ~ z_{dj}^{0}+\sum_{\ell=1}^{L}\zeta_{\ell}z_{dj}^{\ell}, ~~\zeta \in \mathcal{Z}_{d}, ~\forall~d\right\}
	\end{eqnarray}
	where $\mathcal{Z}_{i},~\mathcal{Z}_{d}\subset \mathcal{R}^{L}$ are the user-defined uncertainty sets. Accordingly, the robust formulation of the first stage SBM model \eqref{mod004} is given by:
	\begin{flalign}
	& \max ~~~w_k^1  &\nonumber \\
	& {\hbox{subject to }}\Bigg\{w_k^1-p_1-{\dfrac{1}{D}\left({{\sum\limits_{d=1}^{D}{b_{dk}}}}\right)}  \leq  0, \,\,\,\nonumber p_1-\dfrac{1}{m}{\sum\limits_{i=1}^{m}{{a_{ik}}}} \leq  1, \,\,\, -(p_1 x_{ik}-\sum\limits_{j=1}^{n}{x_{ij}\lambda_j'}-{{S_{ik}^-}})  \leq  0,~~\forall~ i,~~& \\
	& \nonumber\left(x_{ik},~x_{ij}\right)\in \mathcal{U}_{x},
	\,\,\ p_1 z_{dk} - \sum\limits_{j=1}^{n}{z_{dj}\lambda_j'}+{{S_{dk}}} \leq  0,~~\forall~ d,~~\left(z_{dk},~z_{dj}\right)\in \mathcal{U}_{z},\,\,\,z_{dk}b_{dk}-S_{dk}  \leq 0,~~~\forall~ d,~~z_{dk}\in \mathcal{U}_{z},& \\
	&  x_{ik}a_{ik}-S_{ik}^{-}  \leq  0,~~~\forall ~i,~~x_{ik}\in \mathcal{U}_{x},\,\,p_1-\sum\limits_{j=1}^{n}{\lambda_j'}  =  0,
p_1 >0;\,\,\lambda_j'\geq 0, \,\forall j, \,\,\,{S_{ik}^-}\geq 0,\,\,\,{S_{rk}^+}\geq 0,\,\,\,{S_{dk}}  \geq  0.\Bigg\}\label{mod1.13}&
	\end{flalign}
	\subsection{\bf Ellipsoidal uncertainty set}
	Suppose that the support of uncertain data belongs to the ellipsoidal uncertainty set. Then, to obtain robust equivalence, we present the following theorem:
	\begin{thm}\label{Theorem1DEA}
	Tractable formulation of first stage robust SBM model \eqref{mod1.13} under the ellipsoidal uncertainty sets $\mathcal{Z}_{i}=\left\{\zeta~|~\|\zeta\|_{2}\le \Omega_{i},~\forall ~i\right\}$ and $\mathcal{Z}_{d}=\left\{\zeta~|~\|\zeta\|_{2}\le \Omega_{d},~\forall ~d\right\}$  is equivalent to the polynomial size second order conic optimization problem:
	
		\begin{flalign}
		& \max ~~~w_k^1 & \nonumber\\
		& \hbox{subject to } \Bigg\{w_k^1-p_1-{\dfrac{1}{D}\left({{\sum\limits_{d=1}^{D}{b_{dk}}}}\right)} \leq  0, \,\,\,p_1-\dfrac{1}{m}{\sum\limits_{i=1}^{m}{{a_{ik}}}}  \leq  1,&\nonumber\\
		& -p_1 x_{ik}^{0}+\sum\limits_{j=1}^{n}x_{ij}^{0}\lambda_j'+S_{ik}^{-}+\Omega_{i}\sqrt{\sum_{\ell=1}^{L}\left(\sum\limits_{j=1}^{n}
			\left(x_{ij}^{\ell}\lambda_j'\right)-p_1 x_{ik}^{\ell}\right)^{2} }  \leq  0,~~\forall~ i, & \nonumber\\
		& p_1 z_{dk}^{0} - \sum_{j=1}^{n}{z_{dj}^{0}\lambda_j'}+{{S_{dk}}}+ \Omega_{d}\sqrt{\sum_{\ell=1}^{L}\left(p_{1}z_{dk}^{\ell}-\sum\limits_{j=1}^{n}
			z_{dj}^{\ell}\lambda_j'\right)^{2} } \leq  0,~~\forall~ d,& \nonumber\\
		& z_{dk}^{0}b_{dk}+\Omega_{d}\sqrt{\sum_{\ell=1}^{L}\left(z_{dk}^{\ell}b_{dk}\right)^{2}}-S_{dk}  \leq  0,~~~\forall~ d,\,\, x_{ik}^{0}a_{ik}+\Omega_{i}\sqrt{\sum_{\ell=1}^{L}\left(x_{ik}^{\ell}a_{ik}\right)^{2}}-S_{ik}^{-} \leq  0,~~~\forall ~i,& \nonumber\\
		& p_1-\sum\limits_{j=1}^{n}{\lambda_j'}  =  0,
		p_1 >0;\,\,\lambda_j'\geq 0, \,\forall j, \,\,\,{S_{ik}^-}\geq 0,\,\,\,{S_{dk}} \geq  0.\Bigg\} \label{mod1.14}&
		\end{flalign}
	\end{thm}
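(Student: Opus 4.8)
The plan is to apply the standard robust-counterpart transformation for ellipsoidal uncertainty to each uncertain constraint of Model \eqref{mod1.13} one at a time, using the elementary fact that the worst-case value of a linear function over a Euclidean ball of radius $\Omega$ is $\Omega$ times the $\ell_2$-norm of its coefficient vector. Concretely, a semi-infinite constraint of the form ``nominal part $+\,\max_{\|\zeta\|_2\le\Omega}\sum_\ell \zeta_\ell c_\ell \le 0$'' collapses to the single deterministic conic constraint ``nominal part $+\,\Omega\sqrt{\sum_\ell c_\ell^2}\le 0$'', and the whole proof amounts to carrying out this reduction for every constraint of \eqref{mod1.13} that carries uncertain data.

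First I would separate the constraints of \eqref{mod1.13} into those containing uncertain data and those that do not. The objective $w_k^1$, the constraint $w_k^1-p_1-\frac{1}{D}\sum_{d}b_{dk}\le 0$, the normalization $p_1-\frac{1}{m}\sum_i a_{ik}\le 1$, and $p_1-\sum_j\lambda_j'=0$ involve only decision variables ($w_k^1,p_1,\lambda_j',a_{ik},b_{dk},S_{ik}^-,S_{dk}$) and are therefore transcribed verbatim into \eqref{mod1.14}. The four remaining families---the input-balance constraint $-p_1 x_{ik}+\sum_j x_{ij}\lambda_j'+S_{ik}^-\le 0$, the intermediate-balance constraint $p_1 z_{dk}-\sum_j z_{dj}\lambda_j'+S_{dk}\le 0$, and the two definitional constraints $z_{dk}b_{dk}-S_{dk}\le 0$ and $x_{ik}a_{ik}-S_{ik}^-\le 0$---contain $x_{ik},x_{ij},z_{dk},z_{dj}$ and must be replaced by their robust counterparts. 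For each of these I would substitute the affine parametrizations $x_{ik}=x_{ik}^{0}+\sum_\ell\zeta_\ell x_{ik}^{\ell}$, etc., and group the terms into a nominal part (collecting the superscript-$0$ data) plus a $\zeta$-linear perturbation part. For the input-balance constraint this gives the nominal expression $-p_1 x_{ik}^{0}+\sum_j x_{ij}^{0}\lambda_j'+S_{ik}^-$ together with the perturbation $\sum_\ell \zeta_\ell\bigl(\sum_j x_{ij}^{\ell}\lambda_j'-p_1 x_{ik}^{\ell}\bigr)$.

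The crux of the argument, and the step I expect to be the main (though routine) obstacle, is evaluating the inner maximization $\max_{\|\zeta\|_2\le\Omega_i}\sum_\ell \zeta_\ell c_\ell$ with $c_\ell=\sum_j x_{ij}^{\ell}\lambda_j'-p_1 x_{ik}^{\ell}$. By the Cauchy--Schwarz inequality, $\sum_\ell \zeta_\ell c_\ell\le\|\zeta\|_2\,\|c\|_2\le\Omega_i\sqrt{\sum_\ell c_\ell^{2}}$, with equality attained at $\zeta=\Omega_i c/\|c\|_2$ (and trivially at $\zeta=0$ when $c=0$); this is exactly where the self-duality of the $\ell_2$-norm and the affine structure of the uncertainty set are essential. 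Substituting the worst-case value back into the constraint reproduces the third inequality of \eqref{mod1.14}, namely $-p_1 x_{ik}^{0}+\sum_j x_{ij}^{0}\lambda_j'+S_{ik}^-+\Omega_i\sqrt{\sum_{\ell=1}^{L}\bigl(\sum_j x_{ij}^{\ell}\lambda_j'-p_1 x_{ik}^{\ell}\bigr)^{2}}\le 0$.

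Finally I would repeat the identical computation for the three other uncertain constraints. For the intermediate-balance constraint the coefficient vector is $p_1 z_{dk}^{\ell}-\sum_j z_{dj}^{\ell}\lambda_j'$ with radius $\Omega_d$, giving the fourth constraint of \eqref{mod1.14}; for the two definitional constraints the perturbation vectors collapse to $z_{dk}^{\ell}b_{dk}$ and $x_{ik}^{\ell}a_{ik}$, yielding the fifth and sixth constraints with terms $\Omega_d\sqrt{\sum_\ell (z_{dk}^{\ell}b_{dk})^2}$ and $\Omega_i\sqrt{\sum_\ell (x_{ik}^{\ell}a_{ik})^2}$. Collecting all converted conic inequalities together with the unchanged deterministic constraints yields precisely Model \eqref{mod1.14}. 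Since each robustified constraint is a single second-order cone constraint and there are $O(m+D)$ of them, the reformulation has polynomial size and is a second-order conic program, which establishes the claimed equivalence and tractability.
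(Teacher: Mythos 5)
Your proof is correct and reaches the same reformulation \eqref{mod1.14}, with the same overall skeleton as the paper (constraint-wise worst-case reformulation via a protection function, then substitution of its value back into each of the four uncertain constraint families, with the deterministic constraints carried over verbatim). The one genuine difference is the middle step: where the paper invokes Lagrangian strong duality on the inner maximization---forming $L(\zeta,\lambda_i)=\sum_{\ell}\zeta_{\ell}c_{\ell}-\lambda_i\bigl(\|\zeta\|_2-\Omega_i\bigr)$ and reading off $\gamma_i=\Omega_i\sqrt{\sum_{\ell}c_{\ell}^2}$ from the dual---you evaluate the support function of the Euclidean ball directly by Cauchy--Schwarz, $\max_{\|\zeta\|_2\le\Omega_i}\sum_{\ell}\zeta_{\ell}c_{\ell}=\Omega_i\|c\|_2$, exhibiting the attaining point $\zeta=\Omega_i c/\|c\|_2$ (and handling $c=0$ separately). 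Your route is more elementary and in one respect tighter than the paper's: by showing the worst case is \emph{attained}, you establish that the conic constraint is exactly equivalent to the semi-infinite one, not merely sufficient, and you sidestep the paper's somewhat loosely stated duality bookkeeping (its displayed $\inf_{\zeta}L(\zeta,\lambda_i)$ for what is an inner \emph{maximization} only yields the stated value after the outer optimization over the dual multiplier, a step the paper elides). What the paper's duality template buys in exchange is uniformity: the identical three-step scheme transfers verbatim to the polyhedral case (Theorem \ref{theoremR1}), where no closed-form support function is available and LP duality is genuinely needed, whereas your Cauchy--Schwarz shortcut is specific to the $\ell_2$ ball. Your closing size count ($O(m+D)$ second-order cone constraints, hence a polynomial-size SOCP) matches the tractability claim of the theorem.
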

	\begin{proof}
		Proof of this theorem is divided into three steps which are depicted as follows:\\
		
		\textbf{Step 1} (\textit{Worst case scenario}) The third constraint of the first stage robust SBM model \eqref{mod1.13}  can be articulated by applying the max-min regret approach to define the following worst-case formulation for the fixed $k$ is:
		\begin{eqnarray}
		\nonumber && -p_1 x_{ik}^{0}+\sum\limits_{j=1}^{n}x_{ij}^{0}\lambda_j'+S_{ik}^{-}+ \max_{\zeta \in \mathcal{Z}_{i}}\left\{\sum_{\ell=1}^{L}\zeta_{\ell}\left(\sum_{j=1}^{n} \left(x_{ij}^{\ell}\lambda_j'\right)-p_1 x_{ik}^{\ell}\right) \right\} \le 0, ~~~~\forall~i,\\ \label{proof1}
		&&  -p_1 x_{ik}^{0}+\sum\limits_{j=1}^{n}x_{ij}^{0}\lambda_j'+S_{ik}^{-}+ \gamma_{i}(\lambda_j', p_{1})\le 0 ~~~~\forall~i,
		\end{eqnarray}
		where $\gamma_{i}(\lambda_j', p_{1})=  \max\limits_{\zeta \in \mathcal{Z}_{i}}\left\{\sum_{\ell=1}^{L}\zeta_{\ell}\left(\sum_{j=1}^{n} \left(x_{ij}^{\ell}\lambda_j'\right)-p_1 x_{ik}^{\ell}\right) \right\}~~\forall~i,$ is a protection function against all the uncertain input and output data.\\
		
		\textbf{Step 2} (\textit{Dual approach}) The protection function, defined in Step-1, is clearly a linear objective function with a bounded quadratic constraint, implying that it will provide a feasible solution. Therefore, according to the strong duality approach, Its dual also provide a feasible solution. As a result, the Lagrangian function of the inner problem of the protection function is provided by:
		\begin{eqnarray*}
			L(\zeta, \lambda_{i})=\sum_{\ell=1}^{L}\zeta_{\ell}\left(\sum_{j=1}^{n} \left(x_{ij}^{\ell}\lambda_j'\right)-p_1 x_{ik}^{\ell}\right)-\lambda_{i}\left(\left(\sum_{\ell=1}^{L}\zeta_{\ell}^2\right)^{\frac{1}{2}}- \Omega_{i}\right)
		\end{eqnarray*}
		where $ \lambda_{i}$ denotes the dual variables associated with the inner problem, thereby, the optimal value of the Lagrange dual function is defined as:
$
			\inf_{\zeta} ~~L(\zeta, \lambda_{i})
			 =  \Omega_{i}\sqrt{\sum_{\ell=1}^{L}\left(\sum\limits_{j=1}^{n}
				\left(x_{ij}^{\ell}\lambda_j'\right)-p_1 x_{ik}^{\ell}\right)^{2} }=\gamma_{i}\big(\lambda_j', p_{1} \big)
$
		
		\textbf{Step 3} (\textit{Robust counterpart}) By substituting the value of $\gamma_{i}\big(\lambda_j', p_{1} \big)$ in the \eqref{proof1}, obtain the following equation:
		\begin{eqnarray}
		-p_1 x_{ik}^{0}+\sum\limits_{j=1}^{n}x_{ij}^{0}\lambda_j'+S_{ik}^{-}+\Omega_{i}\sqrt{\sum_{\ell=1}^{L}\left(\sum\limits_{j=1}^{n}
			\left(x_{ij}^{\ell}\lambda_j'\right)-p_1 x_{ik}^{\ell}\right)^{2} } & \leq & 0,~~\forall~ i,
		\end{eqnarray}
		which is the desired result of the third constraint of the first stage robust SBM model. Similarly, we can obtain the tractable formulation of the other uncertain constraints of the first stage robust SBM model \eqref{mod1.13}.
	\end{proof}
	\begin{definition}
		Let ${E^e}_k^{1}=1/w_k^1$ be the first stage efficiency in ellipsoidal perturbation in Model (\ref{mod1.14}). Let ${E^e}_k^{1*}$ be the optimal value of ${E^e}_k^{1}$. Then $DMU_{k}$ is called first-stage efficient in ellipsoidal perturbation if ${E^e}_k^{1*}  = 1$; otherwise, $DMU_{k}$ is called first-stage inefficient in ellipsoidal perturbation.
	\end{definition}	

	\begin{thm}\label{Thm.effi3}
	If $(p_1^{*},~ \lambda_j^{*'},~S_{ik}^{*-},~S_{dk}^{*},~a_{ik}^{*},~b_{dk}^{*})$ is the optimal solution of the model \eqref{mod1.14}, then $w_{k}^{1} \ge 1$ and consequently ${E^e}_k^{1} \in (0,~1]$.
	\end{thm}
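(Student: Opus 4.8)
The plan is to follow the argument of Theorem \ref{Thm.effi1} almost verbatim, the only genuinely new ingredient being the verification that the nonlinear ellipsoidal protection (square-root) terms appearing in \eqref{mod1.14} collapse to zero at the canonical point used to certify feasibility. First I would exhibit the candidate point $p_1 = 1$, $\lambda_k^{'} = 1$ with $\lambda_j^{'} = 0$ for $j \neq k$, and $a_{ik} = b_{dk} = S_{ik}^{-} = S_{dk} = 0$ for all $i,d$, and then check that it satisfies every constraint of \eqref{mod1.14}.

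The key observation driving the feasibility step is that each protection term vanishes at this point. In the third constraint $\sum_{j=1}^{n} x_{ij}^{\ell}\lambda_j^{'} - p_1 x_{ik}^{\ell} = x_{ik}^{\ell} - x_{ik}^{\ell} = 0$ for every $\ell$, so $\Omega_i \sqrt{\sum_{\ell=1}^{L}(\sum_{j} x_{ij}^{\ell}\lambda_j^{'} - p_1 x_{ik}^{\ell})^{2}} = 0$; the identical cancellation $p_1 z_{dk}^{\ell} - \sum_j z_{dj}^{\ell}\lambda_j^{'} = 0$ kills the protection term in the fourth constraint; and the terms $\Omega_d\sqrt{\sum_\ell (z_{dk}^\ell b_{dk})^2}$ and $\Omega_i\sqrt{\sum_\ell (x_{ik}^\ell a_{ik})^2}$ in the last two constraints vanish because $a_{ik} = b_{dk} = 0$. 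Hence every robust constraint reduces to its nominal counterpart and holds, the normalization $p_1 - \sum_j \lambda_j^{'} = 0$ is met, and the point is feasible; at it the first constraint reads $w_k^1 \leq p_1 = 1$, so $w_k^1 = 1$ is attainable, and since \eqref{mod1.14} maximizes $w_k^1$ the optimal value satisfies $w_k^{1*} \geq 1$.

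To reproduce the constraint chain of Theorem \ref{Thm.effi1} and confirm the bound directly, I would set $b_{dk} = 0$ in the first constraint to obtain $w_k^1 \leq p_1$ and multiply through by $x_{ik}^{0} \geq 0$, giving $p_1 x_{ik}^{0} \geq w_k^1 x_{ik}^{0}$. At the canonical point the third constraint loses its protection term and becomes $p_1 x_{ik}^{0} \geq \sum_{j} x_{ij}^{0}\lambda_j^{'} + S_{ik}^{-}$; substituting $\lambda_k^{'} = 1$ (the rest zero) and $S_{ik}^{-} = 0$ yields $(w_k^1 - 1)\, x_{ik}^{0} \geq 0$, whence $w_k^1 \geq 1$ since $x_{ik}^{0} > 0$. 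The value $w_k^1 = 0$ is then excluded because it would force $-x_{ik}^{0} \geq 0$, contradicting positivity of the inputs; therefore $w_k^1 > 0$ and ${E^e}_k^{1} = 1/w_k^1 \in (0,\,1]$, as claimed.

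I expect the main obstacle to be precisely the feasibility step: one must argue carefully that the $\ell_2$-norm protection terms are \emph{identically} zero at the canonical point (not merely small), which hinges on the exact cancellation produced by concentrating all weight on $\lambda_k^{'}$ together with $p_1 = 1$. Once that cancellation is established, the ellipsoidal robust model behaves, along this particular feasible direction, exactly like the nominal model \eqref{mod004}, and the rest of the argument is identical to that of Theorem \ref{Thm.effi1}.
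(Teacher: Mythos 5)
Your proposal is correct and follows essentially the same route as the paper, which simply states that the proof is ``similar to Theorem~\ref{Thm.effi1}'': you exhibit the same canonical feasible point ($p_1=1$, $\lambda_k'=1$, all other variables zero) and reproduce the same constraint chain $(w_k^1-1)x_{ik}\ge 0$ together with the exclusion of $w_k^1=0$. The one detail you add beyond the paper --- the explicit verification that the $\ell_2$-norm protection terms vanish identically at this point, so each robust constraint collapses to its nominal counterpart --- is exactly the step the paper leaves implicit, and including it makes the adaptation rigorous.
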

	\begin{proof}
	    This theorem's proof is similar to Theorem-\ref{Thm.effi1}.
	\end{proof}
	
	\begin{thm}\label{thm3}
		Computationally tractable formulation of the second stage robust SBM model \eqref{09} with the ellipsoidal uncertainty sets $\mathcal{Z}_{r_1}=\left\{\zeta~|~\|\zeta\|_{2}\le \Omega_{r_1},~\forall ~r_1\right\}$, $\mathcal{Z}_{r_2}=\left\{\zeta~|~\|\zeta\|_{2}\le \Omega_{r_2},~\forall ~r_2\right\}$ and $\mathcal{Z}_{d}=\left\{\zeta~|~\|\zeta\|_{2}\le \Omega_{d},~\forall ~d\right\}$  is equivalent to the polynomial size second order conic optimization problem:
		\begin{flalign}
		&\max w_k^2& \nonumber\\
		&\mbox{subject to  }\Bigg\{w_k^{2}-p_2-\dfrac{1}{(s_1+s_2)}{\left(\sum\limits_{r_1=1}^{s_1}{{c_{r_1k}}}-\sum\limits_{r_1=1}^{s_2}{{c_{r_2k}}}\right)}\leq 0,\,\,\,p_2-{\dfrac{1}{D}\left({{\sum\limits_{d=1}^{D}{{b_{dk}}}}}\right)}\leq 1,\nonumber&\\
		& -p_2 z_{dk}^{0}+\sum\limits_{j=1}^{n}z_{dj}^{0}\lambda_j'+S_{dk}+\Omega_{d}\sqrt{\sum_{\ell=1}^{L}\left(\sum\limits_{j=1}^{n}
			\left(z_{dj}^{\ell}\lambda_j'\right)-p_2 z_{dk}^{\ell}\right)^{2} }  \leq  0,~~\forall~ d, & \nonumber
				\end{flalign}
		\begin{flalign}
		& p_2 y_{r_1k}^{0} -\sum\limits_{j=1}^{n}y_{r_1j}^{0}\lambda_j'+{{S_{r_1k}^+}}+\Omega_{r_1} \sqrt{\sum_{\ell=1}^{L}\left(p_2 y_{r_1k}^{\ell}-\sum\limits_{j=1}^{n}y_{r_1j}^{\ell}\lambda_j'\right)^{2}}\leq 0,\,\,\,\,\forall r_1,\nonumber&\\
		& -p_2 y_{r_2k}^{0} +\sum\limits_{j=1}^{n}y_{r_2j}^{0}\lambda_j'+{{S_{r_2k}^+}}+\Omega_{r_2}\sqrt{\sum_{\ell=1}^{L}\left(\sum\limits_{j=1}^{n}
			\left(y_{r_2j}^{\ell}\lambda_j'\right)-p_2 y_{r_2k}^{\ell}\right)^{2} }\leq 0,\,\,\,\,\forall r_2,\nonumber&\\
		& p_2-\sum\limits_{j=1}^{n}{\lambda_j'}= 0,\,\,\, y_{r_1k}^{0}{c_{r_1k}}-{S_{r_1k}^+}+ \Omega_{r_1} \sqrt{\sum_{\ell=1}^{L}\left( y_{r_1k}^{\ell}c_{r_1k}\right)^{2}} \leq 0,\,\,\,y_{r_2k}^{0}{c_{r_2k}}-{S_{r_2k}^+}+ \Omega_{r_2} \sqrt{\sum_{\ell=1}^{L}\left( y_{r_2k}^{\ell}c_{r_2k}\right)^{2}} \leq 0,\nonumber&\\
		&{z_{dk}^{0}}{b_{dk}}-{S_{dk}}+ \Omega_{d} \sqrt{\sum_{\ell=1}^{L}\left( z_{dk}^{\ell}b_{dk}\right)^{2}} \leq 0,\,\,\,p_2 >0;\,\,\lambda_j'= 0, \,\forall j, \,\,\,{S_{dk}}\geq 0,\,\,\,{S_{r_1k}^+}\geq 0,\,\,\,{S_{r_2k}^+}\geq 0. \Bigg\}\label{Secondstageellip}&
		\end{flalign}
	\end{thm}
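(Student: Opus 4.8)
The plan is to mirror the three-step argument used in Theorem \ref{Theorem1DEA}, now applied to each uncertain constraint of model \eqref{09} in turn. First I would substitute the affine parametrizations $z_{dk}=z_{dk}^{0}+\sum_{\ell}\zeta_\ell z_{dk}^{\ell}$, $y_{r_1k}=y_{r_1k}^0+\sum_\ell \zeta_\ell y_{r_1k}^\ell$, $y_{r_2k}=y_{r_2k}^0+\sum_\ell\zeta_\ell y_{r_2k}^\ell$ (and likewise for $z_{dj},y_{r_1j},y_{r_2j}$) into every constraint of \eqref{09} that carries uncertain data, splitting each left-hand side into a nominal part (the $\zeta$-independent terms) and a linear-in-$\zeta$ perturbation part. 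I would note at the outset that the two objective-defining constraints, namely $w_k^2-p_2-\frac{1}{s_1+s_2}(\sum c_{r_1k}-\sum c_{r_2k})\le 0$ and $p_2-\frac{1}{D}\sum b_{dk}\le 1$, involve only the auxiliary variables $c_{r_1k},c_{r_2k},b_{dk}$ and therefore carry no data, so they pass through to \eqref{Secondstageellip} unchanged.

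For each remaining constraint I would enforce robust feasibility by the worst-case (max-regret) reformulation, replacing the perturbation term with its maximum over the corresponding ellipsoid. The single fact driving every step is the closed form $\max_{\|\zeta\|_2\le\Omega}\sum_\ell \zeta_\ell v_\ell = \Omega\sqrt{\sum_\ell v_\ell^2}$, which I would justify exactly as in Step 2 of Theorem \ref{Theorem1DEA} by forming the Lagrangian of the bounded convex inner problem and invoking strong duality. Applied to the intermediate-balance constraint with coefficient vector $v_\ell=\sum_j z_{dj}^\ell\lambda_j'-p_2 z_{dk}^\ell$ over $\mathcal{Z}_d$, to the desirable-output constraint with $v_\ell=p_2 y_{r_1k}^\ell-\sum_j y_{r_1j}^\ell\lambda_j'$ over $\mathcal{Z}_{r_1}$, and to the undesirable-output constraint with $v_\ell=\sum_j y_{r_2j}^\ell\lambda_j'-p_2 y_{r_2k}^\ell$ over $\mathcal{Z}_{r_2}$, this yields precisely the three $\Omega\sqrt{\sum_\ell(\cdot)^2}$ terms appearing in \eqref{Secondstageellip}. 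The auxiliary-defining constraints $y_{r_1k}c_{r_1k}-S_{r_1k}^+\le 0$, $y_{r_2k}c_{r_2k}-S_{r_2k}^+\le 0$ and $z_{dk}b_{dk}-S_{dk}\le 0$ are treated identically: only $y_{r_1k},y_{r_2k},z_{dk}$ are uncertain there, so the perturbation vectors are $v_\ell=y_{r_1k}^\ell c_{r_1k}$, $v_\ell=y_{r_2k}^\ell c_{r_2k}$ and $v_\ell=z_{dk}^\ell b_{dk}$, producing the remaining conic terms.

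The step I expect to require the most care is the sign bookkeeping for the undesirable outputs: because $y_{r_2k}$ enters with the opposite sign to $y_{r_1k}$ in both the balance constraint and the objective, I would verify that the perturbation coefficient is collected as $\sum_j y_{r_2j}^\ell\lambda_j'-p_2 y_{r_2k}^\ell$ before taking the norm, so that the $\Omega_{r_2}$ term matches \eqref{Secondstageellip}; since $\|v\|_2=\|-v\|_2$ this affects only presentation, but it is where an error would most easily slip in. Finally, I would assemble the per-constraint robust counterparts into one model, check that each transformed inequality is a second-order cone constraint, and conclude that the whole problem is a polynomial-size SOCP coinciding with \eqref{Secondstageellip}, which completes the equivalence for the second stage robust model \eqref{09}.
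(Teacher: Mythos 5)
Your proposal is correct and follows exactly the route the paper intends: the paper's proof of this theorem is simply the remark that it ``follows the same pattern as Theorem \ref{Theorem1DEA}'', and you carry out precisely that pattern---the worst-case reformulation of each uncertain constraint of \eqref{09}, the Lagrangian/strong-duality evaluation $\max_{\|\zeta\|_{2}\le\Omega}\sum_{\ell}\zeta_{\ell}v_{\ell}=\Omega\sqrt{\sum_{\ell}v_{\ell}^{2}}$, and the constraint-by-constraint assembly into the second-order conic model \eqref{Secondstageellip}. Your extra attention to the sign bookkeeping for the undesirable outputs is a sensible addition but does not change the argument.
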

\begin{proof}
The proof follows the same pattern as Theorem \ref{Theorem1DEA}.
\end{proof}	
	\begin{definition}
		Let ${E^e}_k^{2}=1/w_k^2$ be the second-stage efficiency in ellipsoidal perturbation in Model (\ref{Secondstageellip}). Let ${E^e}_k^{2*}$ be the optimal value of ${E^e}_k^{2}$. Then $DMU_{k}$ is called second-stage efficient in ellipsoidal perturbation if ${E^e}_k^{2*}  = 1$; otherwise, $DMU_{k}$ is called second-stage inefficient in ellipsoidal perturbation.
	\end{definition}
\begin{definition}
The overall efficiency, ${E^e}_k^{O}$, of $DMU_k$ is determined using first and second stage efficiencies ${E^e}_k^{O*}={E^e}_k^{1*}\times {E^e}_k^{2*}$. $DMU_{k}$ is called efficient if ${E^e}_k^{O*}  = 1$, i.e. iff ${E^e}_k^{2*}=1$ and ${E^e}_k^{2*}=1$; otherwise, $DMU_{k}$ is called inefficient in ellipsoidal perturbation.
	\end{definition}
	
	\begin{thm}\label{Thm.effi4}
	If $(p_2^{*},~ \lambda_j^{*'},~S_{r_{1}k}^{*+},~~S_{r_{2}k}^{*+},~S_{dk}^{*},~c_{r_1k}^{*},~c_{r_2k}^{*},~b_{dk}^{*})$ is the optimal solution of the model \eqref{Secondstageellip}, then $w_{k}^{2} \ge 1$ and consequently ${E^e}_k^{2} \in (0,~1]$.
	\end{thm}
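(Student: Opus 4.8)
The plan is to transcribe the argument of Theorem~\ref{Thm.effi1} to the second stage, exploiting the role-reversal that in model~\eqref{Secondstageellip} the intermediate $z_{dk}$ acts as the stage-2 input (the role played by $x_{ik}$ in stage~1), while the auxiliary variables $c_{r_1k}, c_{r_2k}$ play the role that $b_{dk}$ played there. The two constraints to be combined are the linking (objective-defining) constraint $w_k^2 - p_2 - \frac{1}{s_1+s_2}\big(\sum_{r_1} c_{r_1k} - \sum_{r_2} c_{r_2k}\big) \le 0$ and the intermediate-balance constraint $-p_2 z_{dk}^0 + \sum_j z_{dj}^0\lambda_j' + S_{dk} + \Omega_d\sqrt{\cdots} \le 0$.

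First I would establish feasibility. Fixing $k$, set $p_2 = 1$, $\lambda_k^{'} = 1$ and $\lambda_j^{'} = 0$ for $j \ne k$, and put every surplus and auxiliary variable ($S_{dk}, S_{r_1k}^+, S_{r_2k}^+, b_{dk}, c_{r_1k}, c_{r_2k}$) equal to $0$. With this choice each ellipsoidal protection term \emph{telescopes to zero}: in the intermediate-balance constraint $\sum_j z_{dj}^\ell \lambda_j' - p_2 z_{dk}^\ell = z_{dk}^\ell - z_{dk}^\ell = 0$, and likewise for the desirable- and undesirable-output constraints. Hence every perturbed inequality collapses to its nominal self-referential form $0 \le 0$, the normalization constraint reduces to $1 \le 1$, and the linking constraint becomes $w_k^2 \le 1$. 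The point is therefore feasible, and since $w_k^2$ is maximized, its optimal value already satisfies $w_k^{2} \ge 1$.

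Then I would reinforce the bound exactly as in Theorem~\ref{Thm.effi1}: setting $c_{r_1k} = c_{r_2k} = 0$ in the linking constraint gives $w_k^2 \le p_2$, and multiplying by a nominal intermediate level $z_{dk}^0 > 0$ yields $w_k^2 z_{dk}^0 \le p_2 z_{dk}^0$. Dropping the non-negative protection term in the intermediate-balance constraint gives $p_2 z_{dk}^0 \ge \sum_j z_{dj}^0 \lambda_j' + S_{dk}$; evaluating the right-hand side at the self-referential solution $\lambda_k' = 1$, $S_{dk} = 0$ then produces $(w_k^2 - 1) z_{dk}^0 \ge 0$, so $w_k^2 \ge 1$. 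The value $w_k^2 = 0$ is impossible, since it would force $-z_{dk}^0 \ge 0$. Finiteness of the optimum finally gives ${E^e}_k^{2} = 1/w_k^2 \in (0,~1]$.

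The two genuinely new features relative to the stage-1 template are the negative sign carried by the undesirable-output block $-\sum_{r_2} c_{r_2k}$ in the linking constraint and the ellipsoidal protection terms $\Omega\sqrt{\cdots}$. The crux is to verify that these protection terms are non-negative, so that they may be discarded when passing to the lower-bound inequality, and that they vanish identically at the constructed self-referential feasible point. The sign issue is neutralized by the choice $c_{r_1k} = c_{r_2k} = 0$, which removes the output block from the linking constraint altogether; once this is checked, the remainder is a routine repetition of the stage-1 computation.
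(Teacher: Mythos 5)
Your proposal is correct and takes essentially the same route as the paper: the paper's entire proof of Theorem~\ref{Thm.effi4} is the one-line remark that it is ``similar to Theorem~\ref{Thm.effi1}'', and your transcription --- the self-referential feasible point $p_2^{}=1$, $\lambda_k'=1$, all slacks and auxiliary variables zero, with every ellipsoidal protection term vanishing, forcing the linking constraint to admit $w_k^2=1$ --- is exactly that stage-1 argument adapted to the second stage, with the sign of the undesirable-output block neutralized by $c_{r_1k}=c_{r_2k}=0$. Note that your feasibility paragraph alone already gives $w_k^{2*}\ge 1$; the subsequent ``reinforcement'' chain (combining $w_k^2\le p_2$ with $p_2 z_{dk}^0\ge z_{dk}^0$ does not by itself yield $(w_k^2-1)z_{dk}^0\ge 0$) merely reproduces the same inequality-direction looseness present in the paper's own Theorem~\ref{Thm.effi1} computation and can be dropped without loss.
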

	\begin{proof}
	    The proof of this theorem is similar to Theorem-\ref{Thm.effi1}.
	\end{proof}
	
	\subsection{\bf Polyhedral uncertainty set}
	Here, we consider that the support of uncertain data is given in a polyhedral uncertainty set. Thus, to obtain robust equivalence, we present the following theorem:
	
	\begin{thm}\label{theoremR1}
		Computationally tractable formulation of the  first stage robust SBM model \eqref{mod1.13} under the polyhedral uncertainty sets $\mathcal{Z}_{i}=\left\{\zeta~|~H_{i}\zeta +q_{i} \ge 0,~\forall ~i\right\}$ and $\mathcal{Z}_{d}=\left\{\zeta~|~H_{d}\zeta +q_{d}\ge 0,~\forall ~d\right\}$ where $H_{i}, H_{d} \in \mathcal{R}^{K \times L}, \zeta \in \mathcal{R}^{L} $ and $q_{i}, q_{d} \in \mathcal{R}^{K}$ is equivalent to the following linear optimization problem:\\
		\begin{flalign}
		& \max w_k^1 & \nonumber\\
	&	\hbox{subject to } \Bigg\{w_k^1-p_1-{\dfrac{1}{D}\left({{\sum\limits_{d=1}^{D}{b_{dk}}}}\right)}  \leq  0, \,\,\,p_1-\dfrac{1}{m}{\sum\limits_{i=1}^{m}{{a_{ik}}}}  \leq  1,\,\,\,-p_1 x_{ik}^{0}+\sum\limits_{j=1}^{n}x_{ij}^{0}\lambda_j'+S_{ik}^{-}+\sum_{k=1}^{K}q_{i}^{k}\nu_{k}  \leq  0~~\forall~ i,& \nonumber\\
		& \sum_{k=1}^{K}h_{\ell k}^{i}\nu_{k}+\sum\limits_{j=1}^{n}\left(x_{ij}^{\ell}\lambda_j'\right)-p_1 x_{ik}^{\ell} = 0,~~\forall~ i, \ell,\,\,\,p_1 z_{dk}^{0} - \sum_{j=1}^{n}{z_{dj}^{0}\lambda_j'}+{{S_{dk}}}+ \sum_{k=1}^{K} q_{d}^{k}\nu_{k}^1   \leq  0,~~\forall~ d,& \nonumber\\
		& \sum_{k=1}^{K}h_{\ell k}^{d}\nu_{k}^1+p_{1}z_{dk}^{\ell}-\sum\limits_{j=1}^{n}
	 z_{dj}^{\ell}\lambda_j'  =  0,~~\forall~ i, \ell,\,\,\,z_{dk}^{0}b_{dk}-S_{dk}+\sum_{k=1}^{K}q_{d}^{k}\nu_{k}^{2}  \leq  0,~~~\forall~ d,\,\,\,\sum_{k=1}^{K}h_{\ell k}^{d}\nu_{k}^2+z_{dk}^{\ell}b_{dk} =  0,~~~\forall~ d, \ell,& \nonumber\\
		& x_{ik}^{0}a_{ik}-S_{ik}^{-}+\sum_{k=1}^{K}q_{i}^{k}\nu_{k}^{3}   \leq  0,~~~\forall ~i,\,\,\,\sum_{k=1}^{K}h_{\ell k}^{i}\nu_{k}^3+x_{ik}^{\ell}a_{ik} =  0,~~~\forall~ i, \ell,\,\,\, p_1-\sum\limits_{j=1}^{n}{\lambda_j'} =  0,\,\,\, p_1 >0;& \nonumber\\&\lambda_j'\geq 0 \,\forall j, \,\,\,{S_{dk}} \geq  0 \,\forall d, \Bigg\}\label{mod1.15}&
		\end{flalign}
		where $h_{\ell k}^{i} \in H_{i}, h_{\ell k}^{d} \in H_{d}$ and $\nu_{k}, \nu_{k}^1, \nu_{k}^2, \nu_{k}^3 $ are the auxiliary dual variables.
	\end{thm}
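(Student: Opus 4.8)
The plan is to follow the same three-step template used in the proof of Theorem~\ref{Theorem1DEA}, but to replace the conic/Lagrangian dualization suited to ellipsoids by ordinary linear-programming duality, which is the natural tool once each uncertainty set is a polyhedron. I would treat independently each of the four uncertain constraints of the robust model~\eqref{mod1.13}---the input-balance constraint $-(p_1 x_{ik}-\sum_j x_{ij}\lambda_j'-S_{ik}^-)\le 0$, the intermediate-balance constraint $p_1 z_{dk}-\sum_j z_{dj}\lambda_j'+S_{dk}\le 0$, and the two linking constraints $z_{dk}b_{dk}-S_{dk}\le 0$ and $x_{ik}a_{ik}-S_{ik}^-\le 0$---each of which would contribute one of the four families of auxiliary dual variables $\nu_k,\nu_k^1,\nu_k^2,\nu_k^3$ appearing in~\eqref{mod1.15}. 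The two constraints $w_k^1-p_1-\frac{1}{D}\sum_d b_{dk}\le 0$ and $p_1-\frac{1}{m}\sum_i a_{ik}\le 1$ carry no uncertain data and would pass through unchanged.

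For Step~1 (worst case), I would substitute the affine parametrizations $x_{ik}=x_{ik}^0+\sum_\ell\zeta_\ell x_{ik}^\ell$ and $x_{ij}=x_{ij}^0+\sum_\ell\zeta_\ell x_{ij}^\ell$ into the input-balance constraint and collect the $\zeta$-dependent part, so that robustness over $\mathcal{Z}_i$ becomes
\begin{equation*}
-p_1 x_{ik}^0+\sum_{j}x_{ij}^0\lambda_j'+S_{ik}^-+\max_{\zeta\in\mathcal{Z}_i}\sum_{\ell=1}^{L}\zeta_\ell\Bigl(\sum_j x_{ij}^\ell\lambda_j'-p_1 x_{ik}^\ell\Bigr)\le 0 .
\end{equation*}
The inner maximization is a linear program in $\zeta$ over the polyhedron $\mathcal{Z}_i=\{\zeta:H_i\zeta+q_i\ge 0\}$; the same substitution would yield analogous inner LPs for the other three constraints over $\mathcal{Z}_i$ or $\mathcal{Z}_d$.

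For Step~2 (LP duality), I would write the inner program as $\max\{c^\top\zeta:\,-H_i\zeta\le q_i\}$ with $c_\ell=\sum_j x_{ij}^\ell\lambda_j'-p_1 x_{ik}^\ell$, whose linear-programming dual is $\min\{q_i^\top\nu:\,H_i^\top\nu=-c,\ \nu\ge 0\}$. Since the preliminaries restrict every uncertainty set to be convex and compact, the inner LP is feasible and bounded, so strong duality applies and the optimal primal value equals $q_i^\top\nu^\ast$ at any dual-optimal $\nu^\ast$. For Step~3 (robust counterpart), replacing the inner maximum by its dual and merging the two minimizations would render the input-balance constraint equivalent to the existence of $\nu\ge 0$ satisfying $-p_1 x_{ik}^0+\sum_j x_{ij}^0\lambda_j'+S_{ik}^-+\sum_{k=1}^{K}q_i^k\nu_k\le 0$ together with the dual-feasibility equalities $\sum_{k=1}^{K} h_{\ell k}^i\nu_k+\sum_j x_{ij}^\ell\lambda_j'-p_1 x_{ik}^\ell=0$ for all $\ell$, exactly as in~\eqref{mod1.15}. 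I would then repeat Steps~1--3 verbatim for the intermediate-balance and the two linking constraints, keeping careful track of the sign of the $\zeta$-coefficients (for instance $c_\ell=p_1 z_{dk}^\ell-\sum_j z_{dj}^\ell\lambda_j'$ for the intermediate constraint), which would reproduce the remaining inequality and equality blocks of~\eqref{mod1.15} with $\nu_k^1,\nu_k^2,\nu_k^3$.

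The conceptual content here is light, so the hard part will be bookkeeping and sign discipline rather than any deep argument. Each uncertain constraint contributes its own coefficient vector $c$---whose sign flips between the input/linking constraints and the intermediate constraint---and its own dual vector, and I would need to verify that the dual-feasibility relation $H^\top\nu=-c$ translates component-by-component into the stated equalities $\sum_{k} h_{\ell k}\nu_k+(\cdots)=0$. The only substantive hypothesis to confirm is the applicability of strong LP duality, which the compactness of $\mathcal{Z}_i$ and $\mathcal{Z}_d$ guarantees; once that is secured, assembling the four dualized blocks together with the two unchanged certain constraints yields~\eqref{mod1.15}.
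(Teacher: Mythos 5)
Your proposal is correct and follows essentially the same route as the paper: the identical three-step template (worst-case reformulation of each uncertain constraint, dualization of the inner linear program over the polyhedron $\{\zeta : H\zeta + q \ge 0\}$, then dropping the infimum since one feasible dual certificate suffices), applied constraint-by-constraint to produce the four dual-variable families $\nu_k,\nu_k^1,\nu_k^2,\nu_k^3$, with sign conventions matching \eqref{mod1.15} exactly. Your version is in fact marginally more careful than the paper's, since you make the dual nonnegativity $\nu \ge 0$ and the compactness hypothesis justifying strong LP duality explicit, both of which the paper leaves implicit.
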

	
	\begin{proof}
		Again, proof of this theorem is divided into three steps which are depicted as follows:\\
		
		\textbf{Step 1} (\textit{Worst-case scenario}) The third constraint of the  first stage robust SBM model \eqref{mod1.13} can be formulated by applying  the max-min regret approach to define the following worst-case formulation, for the fixed $k$ as:
		\begin{eqnarray}
		\nonumber && -p_1 x_{ik}^{0}+\sum\limits_{j=1}^{n}x_{ij}^{0}\lambda_j'+S_{ik}^{-}+ \max_{\zeta | H_{i}\zeta +q_{i} \ge 0}\left\{\sum_{\ell=1}^{L}\zeta_{\ell}\left(\sum_{j=1}^{n} \left(x_{ij}^{\ell}\lambda_j'\right)-p_1 x_{ik}^{\ell}\right) \right\} \le 0,~~~~\forall~i,\\ \label{proof2}
		&&  -p_1 x_{ik}^{0}+\sum\limits_{j=1}^{n}x_{ij}^{0}\lambda_j'+S_{ik}^{-}+ \gamma_{i}(\lambda_j', p_{i})\le 0, ~~~\forall~i,
		\end{eqnarray}
		where $\gamma_{i}(\lambda_j', p_{i})=  \sup \limits_{\zeta | H_{i}\zeta +q_{i} \ge 0}\left\{\sum_{\ell=1}^{L}\zeta_{\ell}\left(\sum_{j=1}^{n} \left(x_{ij}^{\ell}\lambda_j'\right)-p_1 x_{ik}^{\ell}\right) \right\}$ is a protection function against all the uncertain parameters.\\
		
		\textbf{Step 2} (\textit{Dual approach}) The protection function is clearly a linear objective function with a bounded linear constraint that yields a feasible solution. Thus, according to the strong duality approach, its dual formulation also provides a feasible solution; as a result, the dual formulation of the underlying inner problem of the protection function is given as:
		\begin{eqnarray*}
			\inf_{\nu} \left\{ \sum_{k=1}^{K}q_{i}^{k}\nu_{k}~\Big|~ \sum_{k=1}^{K}h_{\ell k}^{i}\nu_{k}+\sum\limits_{j=1}^{n}\left(x_{ij}^{\ell}\lambda_j'\right)-p_1 x_{ik}^{\ell} =  0,~~\forall~ i, \ell,\right \}
		\end{eqnarray*}
		where $ \nu \in \mathcal{R}^{K}$ are the dual variables associated with the inner problem of the protection function. Thus, equation \eqref{proof2} becomes:
		\begin{eqnarray}\label{proof3}
		-p_1 x_{ik}^{0}+\sum\limits_{j=1}^{n}x_{ij}^{0}\lambda_j'+S_{ik}^{-}+\inf_{\nu} \left\{ \sum_{k=1}^{K}q_{i}^{k}\nu_{k}~\Big|~ \sum_{k=1}^{K}h_{\ell k}^{i}\nu_{k}+\sum\limits_{j=1}^{n}\left(x_{ij}^{\ell}\lambda_j'\right)-p_1 x_{ik}^{\ell} =  0,~~\forall~ i, \ell,\right \} \le 0.
		\end{eqnarray}
		
		\textbf{Step 3} (\textit{Robust counterpart}) Henceforth, at least one value of $\nu$ is sufficient to hold equation \eqref{proof3}. Thus, we can omit the infimum term from the equation. Now, the final robust counterpart is given by:
		\begin{eqnarray}
		\nonumber -p_1 x_{ik}^{0}+\sum\limits_{j=1}^{n}x_{ij}^{0}\lambda_j'+S_{ik}^{-}+\sum_{k=1}^{K}q_{i}^{k}\nu_{k} & \leq & 0~~\forall~ i,\\
		\nonumber \sum_{k=1}^{K}h_{\ell k}^{i}\nu_{k}+\sum\limits_{j=1}^{n}\left(x_{ij}^{\ell}\lambda_j'\right)-p_1 x_{ik}^{\ell} & = & 0,~~\forall~ i, \ell,
		\end{eqnarray}
		Similarly, we can obtain the tractable formulation of the other uncertain constraints of the first stage robust SBM model \eqref{mod1.13}.
	\end{proof}
		\begin{definition}
		Let ${E^p}_k^{1}=1/w_k^1$ be the first stage efficiency in polyhedral perturbation in Model \eqref{mod1.15}. Let ${E^p}_k^{1*}$ be the optimal value of ${E^p}_k^{1}$. Then $DMU_{k}$ is called first-stage efficient in polyhedral perturbation if ${E^p}_k^{1*}  = 1$; otherwise, $DMU_{k}$ is called first-stage inefficient in polyhedral perturbation.
	\end{definition}
	\begin{thm}\label{Thm.effi5}
	If $(p_1^{*},~ \lambda_j^{*'},~S_{ik}^{*-},~S_{dk}^{*},v_{k}^{*},~v_{k}^{1*},~v_{k}^{2*}, ~v_{k}^{3*})$ is the optimal solution of the model \eqref{mod1.15}, then $w_{k}^{1} \ge 1$ and consequently ${E^p}_k^{1} \in (0,~1]$.
	\end{thm}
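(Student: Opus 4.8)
The plan is to reproduce, essentially verbatim, the three--part argument of Theorem~\ref{Thm.effi1}, because model~\eqref{mod1.15} shares its objective $\max w_k^1$, its objective--linking first constraint $w_k^1-p_1-\frac1D\sum_{d=1}^{D}b_{dk}\le 0$, and its feasibility--defining robust input constraint; only the uncertain constraints have been replaced by their polyhedral robust counterparts, which carry the auxiliary dual variables $\nu_k,\nu_k^1,\nu_k^2,\nu_k^3$ and, unlike the ellipsoidal case, introduce genuine linear \emph{equality} constraints. First I would exhibit a feasible point at which $w_k^1=1$, so that the maximum satisfies $w_k^{1}\ge 1$; then I would read off ${E^p}_k^1=1/w_k^1\in(0,1]$.

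For the feasible point I would take the ``identity'' solution used in Theorem~\ref{Thm.effi1}: $p_1=1$, $\lambda_k'=1$ and $\lambda_j'=0$ for $j\neq k$ (so $p_1-\sum_j\lambda_j'=0$), $S_{ik}^-=0$, $S_{dk}=0$, $a_{ik}=0$, $b_{dk}=0$, and --- the crucial choice for this theorem --- all four families of dual variables set to zero, $\nu_k=\nu_k^1=\nu_k^2=\nu_k^3=0$. The step I expect to be the only real novelty relative to the ellipsoidal Theorem~\ref{Theorem1DEA} is verifying that this choice is consistent with the equality constraints $\sum_k h_{\ell k}^i\nu_k+\sum_j x_{ij}^\ell\lambda_j'-p_1x_{ik}^\ell=0$ and its three companions. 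With $\lambda_k'=1$, $p_1=1$ and $\nu=0$ the residual $\sum_j x_{ij}^\ell\lambda_j'-p_1x_{ik}^\ell$ collapses to $x_{ik}^\ell-x_{ik}^\ell=0$, the intermediate companion collapses to $z_{dk}^\ell-z_{dk}^\ell=0$, and the two companions multiplying $a_{ik}=0$ and $b_{dk}=0$ vanish term by term; simultaneously every robust inequality constraint reduces to $0\le 0$ (the $x$-- and $z$--constraints and the $a_{ik},b_{dk}$ side constraints) or to $0\le 1$ (the constraint $p_1-\frac1m\sum_i a_{ik}\le 1$). Hence the identity point is feasible, and at it the first constraint permits $w_k^1$ up to $p_1+\frac1D\sum_d b_{dk}=1$; taking $w_k^1=1$ shows the feasible region contains a point of value $1$.

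Consequently the optimal value obeys $w_k^{1}\ge 1$, matching the structural reading of Theorem~\ref{Thm.effi1} (multiply $w_k^1\le p_1$ by the nominal input $x_{ik}^0>0$ and use the robust input constraint at the identity point to obtain $(w_k^1-1)x_{ik}^0\ge 0$). Since the hypothesis supplies an optimal solution, $w_k^1$ is finite, and $w_k^1=0$ is impossible because it would force $-x_{ik}^0\ge 0$ against $x_{ik}^0>0$; therefore, by the definition ${E^p}_k^1=1/w_k^1$, we conclude ${E^p}_k^1\in(0,1]$. The main obstacle is thus not the arithmetic but the feasibility bookkeeping: I must confirm that setting $\nu=0$ does not violate any of the dualization--induced equalities (it does not, precisely because the perturbation residuals vanish when $\lambda'$ is the indicator of $k$ and $p_1=1$) and that the nominal data are strictly positive so the multiplication and division steps are legitimate.
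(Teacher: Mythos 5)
Your proposal is correct and follows essentially the same route as the paper, which proves Theorem~\ref{Thm.effi5} simply by declaring it similar to Theorem~\ref{Thm.effi1}: you exhibit the same identity solution ($p_1=1$, $\lambda_k'=1$ and $\lambda_j'=0$ otherwise, all slacks and $a_{ik},b_{dk}$ zero) attaining $w_k^1=1$, and your explicit check that $\nu_k=\nu_k^1=\nu_k^2=\nu_k^3=0$ is consistent with the dualization-induced equality constraints is precisely the bookkeeping the paper leaves implicit in its one-line proof. The only blemish is cosmetic: with $p_1=1$ and $a_{ik}=0$ the normalization constraint reads $1\le 1$ rather than $0\le 1$, which of course still holds and does not affect the argument.
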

	\begin{proof}
	    The proof of this theorem is similar to Theorem-\ref{Thm.effi1}.
	\end{proof}

	Next, we provide a theorem to compute the tractable formulation of the second stage robust SBM model when input and output data are given in the form of a polyhedral uncertainty set.
	
	\begin{thm}
		
		Computationally tractable formulation of the second stage robust SBM model \eqref{09} with the Polyhedral uncertainty sets $\mathcal{Z}_{r_1}=\left\{\zeta~|~H_{r_{1}}\zeta + q_{r_{1}}\ge 0,~\forall ~r_1\right\}$, $\mathcal{Z}_{r_2}=\left\{\zeta~|H_{r_{2}}\zeta + q_{r_{2}}\ge 0,~\forall ~r_2\right\}$ and $\mathcal{Z}_{d}=\left\{\zeta~|~H_{d}\zeta + q_{d}\ge 0,~\forall ~d\right\}$ is equivalent to the polynomial-size second-order conic optimization problem:
		\begin{flalign}\label{012}
			&\max w_k^2& \nonumber\\
			&\mbox{subject to  } \Bigg\{ w_k^{2}-p_2-\dfrac{1}{(s_1+s_2)}{\left(\sum\limits_{r_1=1}^{s_1}{{c_{r_1k}}}-\sum\limits_{r_2=1}^{s_2}{{c_{r_2k}}}\right)}\leq 0,\,\,p_2-{\dfrac{1}{D}\left({{\sum\limits_{d=1}^{D}{{b_{dk}}}}}\right)}\leq 1,& \nonumber\\
			&-p_2 z_{dk}^{0}+\sum\limits_{j=1}^{n}z_{dj}^{0}\lambda_j'+S_{dk}+ \sum\limits_{k_{1}=1}^{K} q_{d}^{k_{1}} v_{k_{1}} \leq  0,~~\forall~ d,  \,\,\sum\limits_{k_{1}=1}^{K} h_{\ell k_{1}}^{d} v_{k_{1}}+  \sum\limits_{j=1}^{n} z_{dj}^{\ell}\lambda_j'-p_2 z_{dk}^{\ell} = 0,~~\forall~ d, \ell,  \nonumber &\end{flalign}
		\begin{flalign}
			& p_2 y_{r_1k}^{0} -\sum\limits_{j=1}^{n}y_{r_1j}^{0}\lambda_j'+{{S_{r_1k}^+}}+ \sum\limits_{k_{1}=1}^{K} q_{r_{1}}^{k_{1}} v_{k_{1}}^{1} \leq 0,\,\,\,\,\forall r_1,\,\,\,\sum\limits_{k_{1}=1}^{K} h_{\ell k_{1}}^{r_{1}} v_{k_{1}}^{1}+ p_2 y_{r_1k}^{\ell}-\sum\limits_{j=1}^{n}y_{r_1j}^{\ell}\lambda_j' = 0,~~\forall~ r_{1}, \ell, &\nonumber	\\
			& -p_2 y_{r_2k}^{0} +\sum\limits_{j=1}^{n}y_{r_2j}^{0}\lambda_j'+{{S_{r_2k}^+}}+\sum\limits_{k_{1}=1}^{K} q_{r_{2}}^{k_{1}} v_{k_{1}}^{2} \leq 0,\,\,\,\,\forall r_2,\,\,\, \sum\limits_{k_{1}=1}^{K} h_{\ell k_{1}}^{r_{2}} v_{k_{1}}^{2}+ \sum\limits_{j=1}^{n}y_{r_2j}^{\ell}\lambda_j'-p_2 y_{r_2k}^{\ell}= 0,~~\forall~ r_{2}, \ell, \nonumber & \\
			& p_2-\sum\limits_{j=1}^{n}{\lambda_j'}= 0,\,\,\,  y_{r_1k}^{0}{c_{r_1k}}-{S_{r_1k}^+}+ \sum\limits_{k_{1}=1}^{K} q_{r_{1}}^{k_{1}} v_{k_{1}}^{3}  \leq 0,\,\,\,\,\forall r_1,\,\,\,  \sum\limits_{k_{1}=1}^{K} h_{\ell k_{1}}^{r_{1}} v_{k_{1}}^{3}+ y_{r_1k}^{\ell}c_{r_1k} = 0, \,\,\,\,\forall r_1, \ell,\nonumber &\\
			&   y_{r_2k}^{0}{c_{r_2k}}-{S_{r_2k}^+}+  \sum\limits_{k_{1}=1}^{K} q_{r_{2}}^{k_{1}} v_{k_{1}}^{4} \leq 0,\,\,\,\,\forall r_2,\,\,\, \sum\limits_{k_{1}=1}^{K} h_{\ell k_{1}}^{r_{2}} v_{k_{1}}^{4}+ y_{r_2k}^{\ell}c_{r_2k}= 0, \,\,\,\,\forall r_2, \ell, \nonumber& \\
			&z_{dk}^{0}{b_{dk}}-{S_{dk}}+ \sum\limits_{k_{1}=1}^{K} q_{d}^{k_{1}} v_{k_{1}}^{5} \Omega_{d} \leq 0, \,\,\,\,\forall d,\,\,\,\sum\limits_{k_{1}=1}^{K} h_{\ell k_{1}}^{d} v_{k_{1}}^{5}+z_{dk}^{\ell}b_{dk} =0 \,\,\,\,\forall d, \ell, \,\,\,p_2 >0;\,\,\lambda_j'= 0, \,\forall j, \nonumber& \\&{S_{dk}}\geq 0,\,\,\,{S_{r_1k}^+}\geq 0,\,\,\,{S_{r_2k}^+}, v_{k_{1}}, v_{k_{1}}^{1}, v_{k_{1}}^{2},v_{k_{1}}^{3},v_{k_{1}}^{4},v_{k_{1}}^{5}\geq 0. \Bigg\}&
		\end{flalign}
	\end{thm}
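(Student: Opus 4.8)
The plan is to mirror the three-step argument used in Theorem \ref{theoremR1}, applying it constraint-by-constraint to each uncertain inequality of the second stage model \eqref{09}. First I would substitute the affine representations of the uncertain intermediate and output data---$z_{dk}=z_{dk}^{0}+\sum_{\ell}\zeta_{\ell}z_{dk}^{\ell}$, $z_{dj}=z_{dj}^{0}+\sum_{\ell}\zeta_{\ell}z_{dj}^{\ell}$, and analogously for $y_{r_1k}^{D},y_{r_1j}^{D},y_{r_2k}^{U},y_{r_2j}^{U}$---into each of the six uncertain constraint families: the intermediate balance constraint, the desirable and undesirable output constraints, and the three linking constraints $z_{dk}b_{dk}-S_{dk}\le 0$, $y_{r_1k}c_{r_1k}-S_{r_1k}^{+}\le 0$, and $y_{r_2k}c_{r_2k}-S_{r_2k}^{+}\le 0$. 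Each such inequality then separates into a nominal part plus a single term that is linear in the perturbation vector $\zeta$.

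In Step 1 (worst-case scenario) I would replace each $\zeta$-dependent term by its supremum over the polyhedral set $\mathcal{Z}=\{\zeta\mid H\zeta+q\ge 0\}$, exactly as in \eqref{proof2}, yielding a protection function for every constraint. In Step 2 (dual approach) I would note that each protection function is the optimal value of a linear program in $\zeta$ over a bounded polyhedron; invoking strong LP duality, that value equals $\inf_{v}\{\sum_{k_1}q^{k_1}v_{k_1}\mid \sum_{k_1}h_{\ell k_1}v_{k_1}+(\text{coefficient of }\zeta_{\ell})=0\}$, with a fresh nonnegative dual vector introduced for each of the six families (hence the variables $v_{k_1},v_{k_1}^{1},\dots,v_{k_1}^{5}$ in \eqref{012}). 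In Step 3 (robust counterpart) I would argue, as in the passage following \eqref{proof3}, that feasibility of the original robust constraint requires only the existence of some dual-feasible $v$, so the infimum may be dropped and its defining equalities adjoined directly to the model, which assembles into \eqref{012}.

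The main obstacle is a bookkeeping hazard rather than a conceptual one: tracking signs and orientations. The intermediate balance enters \eqref{09} as a $\ge$ inequality while the output constraints are $\le$, so before dualizing one must orient each max-min regret in the direction that genuinely captures the worst case; attaching the protection term to the wrong side would invert the robustified inequality. The undesirable outputs are particularly delicate, since they appear with reversed sign, making the coefficient of $\zeta_{\ell}$ in that constraint $\sum_{j}y_{r_2j}^{\ell}\lambda_j'-p_2 y_{r_2k}^{\ell}$, which must then be matched exactly by the corresponding dual equality $\sum_{k_1}h_{\ell k_1}^{r_2}v_{k_1}^{2}+\sum_{j}y_{r_2j}^{\ell}\lambda_j'-p_2 y_{r_2k}^{\ell}=0$. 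Once the orientation of all six families is fixed and a distinct dual vector is assigned to each, the argument is identical to that of Theorem \ref{theoremR1}, and collecting the resulting linear constraints gives the stated tractable reformulation \eqref{012}.
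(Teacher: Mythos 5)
Your proposal is correct and matches the paper's approach exactly: the paper's own proof is a one-line deferral stating that the argument follows the same pattern as Theorem \ref{theoremR1}, and you carry out precisely that plan---worst-case reformulation, strong LP duality on each protection function with a fresh dual vector per uncertain constraint family, then dropping the infimum. Your added attention to the sign orientation of the undesirable-output and intermediate constraints is the right bookkeeping detail and is consistent with the dual equalities appearing in \eqref{012}.
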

	\begin{proof}
The proof follows the same pattern as Theorem \ref{theoremR1}.
	\end{proof}
	\begin{definition}
		Let ${E^p}_k^{2}=1/w_k^2$ be the second-stage efficiency in polyhedral perturbation in Model (\ref{012}). Let ${E^p}_k^{2*}$ be the optimal value of ${E^p}_k^{2}$. Then $DMU_{k}$ is called second-stage efficient in polyhedral perturbation if ${E^p}_k^{2*}  = 1$; otherwise, $DMU_{k}$ is called second-stage inefficient in polyhedral perturbation.
	\end{definition}
\begin{definition}
The overall efficiency, ${E^p}_k^{O}$, of $DMU_k$ is determined using first and second stage efficiencies ${E^p}_k^{O*}={E^p}_k^{1*}\times {E^p}_k^{2*}$. $DMU_{k}$ is called efficient if ${E^p}_k^{O*}  = 1$, i.e. iff ${E^p}_k^{2*}=1$ and ${E^p}_k^{2*}=1$; otherwise, $DMU_{k}$ is called inefficient in ellipsoidal perturbation.

	\end{definition}
	
	\begin{thm}\label{Thm.effi6}
	If $(p_2^{*},~ \lambda_j^{*'},{S_{r_1k}^{*+}},~S_{r_2k}^{*+},~S_{dk}^{*},v_{k_{1}}^{*}, v_{k_{1}}^{*1}, v_{k_{1}}^{*2},v_{k_{1}}^{*3},v_{k_{1}}^{*4},v_{k_{1}}^{*5})$ is the optimal solution of the model (\ref{012}), then $w_{k}^{2} \ge 1$ and consequently ${E^p}_k^{2} \in (0,~1]$.
	\end{thm}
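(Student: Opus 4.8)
The plan is to follow the template of Theorem~\ref{Thm.effi1}, adapted to the second-stage polyhedral model \eqref{012}, in which the intermediate $z_{dk}$ plays the role that the input $x_{ik}$ played in the first stage. \textbf{Step 1 (a reference feasible point).} First I would exhibit an explicit feasible solution of \eqref{012} by the self-selection choice $p_2^{*}=1$, $\lambda_k'=1$ and $\lambda_j'=0$ for $j\neq k$, together with $c_{r_1k}^{*}=c_{r_2k}^{*}=0$, $b_{dk}^{*}=0$, all slacks $S_{dk}^{*}=S_{r_1k}^{*+}=S_{r_2k}^{*+}=0$, and every auxiliary dual variable $v_{k_1}^{*},v_{k_1}^{*1},\dots,v_{k_1}^{*5}$ equal to zero. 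I would then verify each constraint: the normalization-type inequality reduces to $p_2=1\le 1$; each balance inequality (for $z_{dk}$, $y_{r_1k}^{D}$, $y_{r_2k}^{U}$) collapses to an identity such as $-z_{dk}^{0}+z_{dk}^{0}=0\le 0$ because $\lambda_k'=1$ cancels the nominal term; and each coupling equality $\sum_{k_1}h_{\ell k_1}v_{k_1}+(\text{deviation terms})=0$ holds because at $p_2=1,\ \lambda_k'=1$ the deviation terms themselves cancel (e.g.\ $z_{dk}^{\ell}-z_{dk}^{\ell}=0$). This shows the feasible region is nonempty.

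\textbf{Step 2 (the bound $w_k^{2}\ge 1$).} At the point of Step~1 the objective-bound constraint $w_k^{2}-p_2-\frac{1}{s_1+s_2}\big(\sum_{r_1}c_{r_1k}-\sum_{r_2}c_{r_2k}\big)\le 0$ becomes $w_k^{2}\le 1$, so this feasible solution attains $w_k^{2}=1$ and hence the optimum satisfies $w_k^{2*}\ge 1$. To mirror Theorem~\ref{Thm.effi1} more closely, I would instead combine the objective-bound constraint with the intermediate balance constraint: using $b_{dk}\ge 0$ the first constraint gives $w_k^{2}\le p_2$, so $w_k^{2}z_{dk}^{0}\le p_2 z_{dk}^{0}$; substituting into $-p_2 z_{dk}^{0}+\sum_j z_{dj}^{0}\lambda_j'+S_{dk}+\sum_{k_1}q_d^{k_1}v_{k_1}\le 0$ and then inserting $\lambda_k'=1$, $S_{dk}=0$, $v=0$ yields $(w_k^{2}-1)z_{dk}^{0}\ge 0$, whence $w_k^{2}\ge 1$ since $z_{dk}^{0}>0$.

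\textbf{Step 3 (efficiency range).} By the definition ${E^p}_k^{2}=1/w_k^{2}$ and $w_k^{2}\ge 1$ I conclude ${E^p}_k^{2}\le 1$. For strict positivity I would note that $w_k^{2}=0$ forces $-z_{dk}^{0}\ge 0$ in the relation of Step~2, contradicting $z_{dk}^{0}>0$; since $w_k^{2}$ is finite this gives ${E^p}_k^{2}\in(0,1]$.

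\textbf{Main obstacle.} The genuinely new part relative to the first-stage argument is the bookkeeping introduced by the polyhedral robust counterpart: the extra auxiliary variables $c_{r_1k},c_{r_2k},b_{dk}$ and the five families of dual variables $v_{k_1}^{(\cdot)}$, each tied to a coupling equality of the form $\sum_{k_1}h_{\ell k_1}v_{k_1}+(\text{deviation})=0$. The delicate point is confirming that the all-zero choice of these dual variables is consistent with those equalities at the reference point, which in turn relies on the deviation terms cancelling under $p_2=1,\ \lambda_k'=1$; the sign reversal of the undesirable-output block (the minus sign before $\sum_{r_2}c_{r_2k}$ and the flipped $y_{r_2k}^{U}$ constraint) also has to be checked to see that it stays neutral, which it does because $c_{r_2k}=0$ at the reference point.
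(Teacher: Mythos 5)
Your proposal is correct and follows essentially the paper's own route: the paper's proof of this theorem is literally ``similar to Theorem~\ref{Thm.effi1}'', i.e.\ exhibit the self-evaluating feasible point ($p_2=1$, $\lambda_k'=1$, $\lambda_j'=0$ for $j\ne k$, all slacks and the variables $b_{dk},c_{r_1k},c_{r_2k}$ and dual variables $v_{k_1}^{(\cdot)}$ set to zero, under which the coupling equalities hold because the deviation terms cancel) and conclude $w_k^{2}\ge 1$ and ${E^p}_k^{2}\in(0,1]$, exactly as you do. One remark: the valid core of your Step~2 is the first argument (a feasible point attaining $w_k^{2}=1$ forces the optimum to satisfy $w_k^{2*}\ge 1$), while the ``mirror'' chaining inherits the paper's own inequality-direction looseness --- from $w_k^{2}\le p_2$ and $p_2 z_{dk}^{0}\ge z_{dk}^{0}$ one cannot deduce $(w_k^{2}-1)z_{dk}^{0}\ge 0$ --- so that redundant variant is best dropped.
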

	\begin{proof}
	    The proof of this theorem is similar to Theorem-\ref{Thm.effi1}.
	\end{proof}

	\subsection{\bf Budget uncertainty set}
	In this case, we assume that all the input and output $x_{ij}(\zeta),~ x_{ij}(\zeta),~  z_{dj}(\zeta), \hbox{ and } z_{dj}(\zeta)$ are affinely depend on some exogenous uncertainty $\zeta \in \mathcal{Z}=[-1, 1]^{L}$. If $\zeta=0$, then SBM model becomes crisp/nominal. Now, we rewrite the uncertain constraint of the SBM model \eqref{mod1.13} as:
	\begin{eqnarray*}
	&\nonumber -p_1 x_{ik}(\zeta)+\sum\limits_{j=1}^{n}{x_{ij}(\zeta)\lambda_j'}+{{S_{ik}^-}}  \leq  0,~~\zeta \in \mathcal{U}_{\Gamma_{i}}~~\forall~ i,
	&\\
	& \nonumber p_1 z_{dk}(\zeta) - \sum\limits_{j=1}^{n}{z_{dj}(\zeta)\lambda_j'}+S_{dk}^{+} \leq  0,~~\zeta\in \mathcal{U}_{\Gamma_{d}}~~\forall~ d,& \\
	& \nonumber z_{dk}(\zeta)b_{dk}-S_{dk}^{+}  \leq 0,~~~\forall~ d,~~\zeta\in \mathcal{U}_{\Gamma_{d}} ~~\forall~ d,& \\
	& \nonumber x_{ik}(\zeta)a_{ik}-S_{ik}^{-}  \leq  0,~~~\forall ~i,~~\zeta\in \mathcal{U}_{\Gamma_{i}}~~\forall~ i,& \\
	\end{eqnarray*}
	where,  $\mathcal{U}_{\Gamma_{i}}=\{ \zeta \in \mathcal{Z}~|~~~ \|\zeta\|_{1} \le \Gamma_{i}  \},~\forall~ i$ and  $\mathcal{U}_{\Gamma_{d}}=\{ \zeta \in \mathcal{Z}~|~~~ \|\zeta\|_{1} \le \Gamma_{d}  \},~\forall~ d$ sets are known as budget uncertainty sets and $\Gamma_{i}, \Gamma_{d}$ is denotes the budget parameter. If the value of these budget parameters is 0, then again SBM model converts to a crisp model and if its value equals the total number of uncertain input and output data for individual constraint, then the robust SBM  model gives the best feasible robust solutions in the worst-case scenarios. Next, we derive the theorem from computing the tractable formulation of both stages robust SBM model under the budget uncertainty set.
	
\begin{thm}
		Computationally tractable formulation of the first stage robust SBM model \eqref{mod1.13} under the Budget uncertainty sets $\mathcal{U}_{\Gamma_{i}}=\left\{\zeta~|~~~\|\zeta\|_{1} \le \Gamma_{i} ,~\forall ~i\right\}$,  and
	$\mathcal{U}_{\Gamma_{d}}=$ $\left\{\zeta~|~\|\zeta\|_{1} \le \Gamma_{d} ,~\forall ~d\right\}$,  is equivalent to the following linear optimization problem:
	\begin{flalign}
	& \max ~~~w_k^1 & \nonumber\\
&	\hbox{subject to  } \Bigg\{ w_k^1-p_1-{\dfrac{1}{D}\left({{\sum\limits_{d=1}^{D}{b_{dk}}}}\right)}  \leq  0, \,\,\,p_1-\dfrac{1}{m}{\sum\limits_{i=1}^{m}{{a_{ik}}}}  \leq  1,\,\,\,-p_1 x_{ik}^{0}+\sum\limits_{j=1}^{n}x_{ij}^{0}\lambda_j'+S_{ik}^{-}+ \Gamma_{i}\kappa_{i}^{x}+\sum\limits_{\ell=1}^{L} \mu_{i\ell}^{x} \le 0, ~\forall~i,& \nonumber\\
	& \kappa_{i}^{x}+\mu_{i\ell}^{x} \ge -p_1 x_{ik}^{\ell}+\sum\limits_{j=1}^{n}x_{ij}^{\ell}\lambda_j',~\forall~i,\ell,\,\,\, p_1 z_{dk}^{0} - \sum_{j=1}^{n}z_{dj}^{0}\lambda_j'+S_{dk}+ \Gamma_{d}^{z}\kappa_{d}^{z}+\sum\limits_{\ell=1}^{L} \mu_{d\ell}^{z}   \leq  0,~~\forall~ d,& \nonumber\\
	& \kappa_{d}^{z}+\mu_{d\ell}^{z} \ge p_1 z_{dk}^{\ell} - \sum_{j=1}^{n}z_{dj}^{\ell}\lambda_j',~\forall~d,\ell,\,\,\, z_{dk}^{0}b_{dk}+\kappa^{z}+\sum_{\ell=1}^{L}\mu_{d\ell}-S_{dk}  \leq  0,~~~\forall~ d,\,\,\, \kappa^{z}+ \mu_{d\ell} \ge z_{dk}^{\ell}b_{dk},~\forall~d, \ell, & \nonumber\\
	& x_{ik}^{0}a_{ik}+\kappa^{x}+\sum_{\ell=1}^{L}\mu_{i\ell}-S_{ik}^{-} \leq  0,~~~\forall ~i, \,\,\, \kappa^{x}+\mu_{i\ell} \ge x_{ik}^{\ell}a_{ik},~\forall~i, \ell, \,\,\, p_1-\sum\limits_{j=1}^{n}{\lambda_j'} =  0,& \nonumber\\
	& \kappa^{x}\ge 0, \kappa^{z}\ge 0, \mu^{x}\ge 0, \mu^{z}\ge 0, \kappa\ge 0, \mu \ge 0, \,\,\, p_1 >0;\,\,\lambda_j'\geq 0 \,\forall j, \,\,\,{S_{ik}^-}\geq 0\,\forall i,\,\,\,{S_{dk}} \geq  0 \,\forall d, \Bigg\}&\label{mod1.015}
	\end{flalign}
	
	\end{thm}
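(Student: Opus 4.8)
The plan is to mirror the three-step argument used for the polyhedral first-stage result in Theorem \ref{theoremR1}, since the budget set $\mathcal{U}_{\Gamma_i}=\{\zeta\in[-1,1]^L : \|\zeta\|_1\le\Gamma_i\}$ is itself a bounded polytope and the inner worst-case problem is therefore a linear program to which strong LP duality applies verbatim.

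\textbf{Step 1 (Worst-case scenario).} First I would substitute the affine parametrizations $x_{ik}=x_{ik}^0+\sum_\ell \zeta_\ell x_{ik}^\ell$ and $x_{ij}=x_{ij}^0+\sum_\ell \zeta_\ell x_{ij}^\ell$ into the third constraint of \eqref{mod1.13} and separate out the uncertain part as a protection function
\[
\gamma_i(\lambda',p_1)=\max_{\zeta\in\mathcal{U}_{\Gamma_i}}\sum_{\ell=1}^L \zeta_\ell\Big(\sum_{j=1}^n x_{ij}^\ell\lambda_j'-p_1 x_{ik}^\ell\Big),
\]
so that robust feasibility becomes $-p_1x_{ik}^0+\sum_j x_{ij}^0\lambda_j'+S_{ik}^-+\gamma_i(\lambda',p_1)\le 0$. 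The same substitution applied to the intermediate constraint (over $\mathcal{U}_{\Gamma_d}$) and to the two linking constraints $z_{dk}b_{dk}\le S_{dk}$ and $x_{ik}a_{ik}\le S_{ik}^-$ produces entirely analogous protection functions, so it suffices to treat one representative constraint in detail.

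\textbf{Step 2 (Dual approach).} The key step is to dualize the inner LP. Writing $c_{i\ell}=\sum_j x_{ij}^\ell\lambda_j'-p_1 x_{ik}^\ell$, I would argue that the optimal $\zeta_\ell$ aligns with $\mathrm{sign}(c_{i\ell})$, so that $\gamma_i$ reduces to $\max\{\sum_\ell |c_{i\ell}| z_\ell : \sum_\ell z_\ell \le \Gamma_i,\ 0\le z_\ell\le 1\}$. Its LP dual introduces a single multiplier $\kappa_i^x$ for the cardinality (budget) constraint $\sum_\ell z_\ell\le\Gamma_i$ and one multiplier $\mu_{i\ell}^x$ for each box constraint $z_\ell\le 1$; strong duality then gives $\gamma_i=\min\{\Gamma_i\kappa_i^x+\sum_\ell\mu_{i\ell}^x : \kappa_i^x+\mu_{i\ell}^x\ge |c_{i\ell}|,\ \kappa_i^x,\mu_{i\ell}^x\ge 0\}$.

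\textbf{Step 3 (Robust counterpart).} Because $\gamma_i$ sits on the left of a $\le 0$ inequality, it is enough that one feasible dual pair realize the bound, so the infimum may be dropped: the protected inequality is replaced by $-p_1x_{ik}^0+\sum_j x_{ij}^0\lambda_j'+S_{ik}^-+\Gamma_i\kappa_i^x+\sum_\ell\mu_{i\ell}^x\le 0$ together with the coupling constraints $\kappa_i^x+\mu_{i\ell}^x\ge c_{i\ell}$. Repeating this dualization for the intermediate and the two linking constraints, with their own $\kappa,\mu$ families, assembles exactly the linear system \eqref{mod1.015}, establishing the claimed equivalence. The main obstacle is the Step-2 reduction over the combined $\ell_1$-budget and box support $[-1,1]^L$: one must verify that the maximum is attained with each $\zeta_\ell$ taking the sign of its coefficient so that the $|c_{i\ell}|$ terms emerge, which is precisely what forces the pair $(\kappa,\mu)$ of dual variables instead of a single multiplier and what keeps the robust counterpart linear; a careful treatment should retain both $\kappa_i^x+\mu_{i\ell}^x\ge c_{i\ell}$ and its sign-reversed companion to capture the absolute value for coefficients of either sign.
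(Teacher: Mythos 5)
Your proposal is correct and follows exactly the route the paper takes: the paper's own proof of this theorem is a single sentence invoking strong duality, and your three-step argument (protection function, sign-aligned reduction of the inner problem over the $\ell_1$-plus-box support, LP dualization with a budget multiplier $\kappa$ and box multipliers $\mu_\ell$, then dropping the infimum) is precisely the Bertsimas--Sim derivation that sentence gestures at, carried out in the same pattern as the paper's polyhedral-case proof. Your closing caveat is well taken and in fact makes your derivation more careful than the paper's statement: the displayed system retains only the one-sided constraints $\kappa_i^x+\mu_{i\ell}^x\ge -p_1x_{ik}^{\ell}+\sum_{j=1}^{n}x_{ij}^{\ell}\lambda_j'$, which match the worst case only when these coefficients are nonnegative, whereas the exact robust counterpart requires the sign-reversed companions (or auxiliary variables bounding the absolute values) as you note.
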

	
	\begin{proof}
	    By using the strong duality approach, one can obtain the desired result.
	\end{proof}
	\begin{definition}
		Let ${E^b}_k^{1}=1/w_k^1$ be the first stage efficiency in budget perturbation in Model (\ref{mod1.015}). Let ${E^b}_k^{1*}$ be the optimal value of ${E^b}_k^{1}$. Then $DMU_{k}$ is called first-stage efficient in budget perturbation if ${E^b}_k^{1*}  = 1$; otherwise, $DMU_{k}$ is called first-stage inefficient in budget perturbation.
	\end{definition}
	
		\begin{thm}\label{Thm.effi7}
	If $(p_1^{*},~ \lambda_j^{*'},~S_{ik}^{*-},~S_{dk}^{*},\kappa^{*x}, \kappa^{z*}, \mu^{x*}, \mu^{z*}, \kappa^{*}, \mu^{*} )$ is the optimal solution of the model (\ref{mod1.015}), then $w_{k}^{1} \ge 1$ and consequently ${E^b}_k^{1} \in (0,~1]$.
	\end{thm}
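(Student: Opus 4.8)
The plan is to mirror the argument of Theorem \ref{Thm.effi1}, now carried out on the budget-dualized system \eqref{mod1.015}. First I would exhibit one explicit feasible point whose objective value equals $1$; since \eqref{mod1.015} maximizes $w_k^1$, the optimal value then automatically satisfies $w_k^{1*}\ge 1$. Combined with the defining relation ${E^b}_k^{1}=1/w_k^{1}$, this yields ${E^b}_k^{1}\le 1$, and positivity of $w_k^1$ places the efficiency in $(0,1]$.

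For the feasibility step, fix $k$ and set $p_1^{*}=1$, $\lambda_k^{'*}=1$ with $\lambda_j^{'*}=0$ for $j\neq k$, all slacks $S_{ik}^{*-}=0$ and $S_{dk}^{*}=0$, and every auxiliary and dual variable ($a_{ik}$, $b_{dk}$, $\kappa_i^{x}$, $\kappa_d^{z}$, $\kappa^{x}$, $\kappa^{z}$, and all of $\mu_{i\ell}^{x}$, $\mu_{d\ell}^{z}$, $\mu_{i\ell}$, $\mu_{d\ell}$) equal to $0$. The equality $p_1-\sum_{j}\lambda_j'=0$ holds since $\sum_j\lambda_j'=\lambda_k'=1=p_1$. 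The key point is that $\lambda_k'=1$ collapses every deviation term: for example the auxiliary constraint $\kappa_i^{x}+\mu_{i\ell}^{x}\ge -p_1x_{ik}^{\ell}+\sum_j x_{ij}^{\ell}\lambda_j'$ has right-hand side $-x_{ik}^{\ell}+x_{ik}^{\ell}=0$, so $0\ge 0$ holds, and the same cancellation validates the constraints indexed by $d,\ell$ together with the self-consistency rows $x_{ik}^{0}a_{ik}+\kappa^{x}+\sum_{\ell}\mu_{i\ell}-S_{ik}^{-}\le 0$ and $z_{dk}^{0}b_{dk}+\kappa^{z}+\sum_{\ell}\mu_{d\ell}-S_{dk}\le 0$, which reduce to $0\le 0$. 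The robust input row becomes $-p_1x_{ik}^{0}+x_{ik}^{0}\le 0$, i.e. $0\le 0$, while constraint~$1$ reads $w_k^1\le p_1+\tfrac{1}{D}\sum_d b_{dk}=1$, met by taking $w_k^1=1$.

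Hence this point is feasible with objective $1$, so the optimum obeys $w_k^{1*}\ge 1$ and ${E^b}_k^{1}=1/w_k^{1*}\le 1$; since $w_k^{1*}\ge 1>0$ the ratio is finite and positive, giving ${E^b}_k^{1}\in(0,1]$. As in Theorem \ref{Thm.effi1}, the same conclusion can be read off directly: putting $b_{dk}=0$ in constraint~$1$ gives $p_1\ge w_k^1$, hence $p_1x_{ik}^{0}\ge w_k^1x_{ik}^{0}$, and evaluating the robust input row at $\lambda_k'=1$, $S_{ik}^{-}=0$, $\kappa_i^{x}=\mu_{i\ell}^{x}=0$ forces $(w_k^1-1)x_{ik}^{0}\ge 0$, so $w_k^1\ge 1$; moreover $w_k^1=0$ would require the impossible $-x_{ik}^{0}\ge 0$ for a positive nominal input.

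I expect the only genuine work to be the feasibility bookkeeping over the enlarged constraint set produced by the budget dualization---the blocks carrying $\kappa$ and $\mu$---rather than any new idea. This step is routine precisely because the choice $\lambda_k'=1$ annihilates all the deviation contributions $x_{ij}^{\ell}$ and $z_{dj}^{\ell}$ at once, exactly as in Theorem \ref{Thm.effi1}; consequently every auxiliary inequality degenerates to $0\ge 0$ and no estimate on $\Gamma_i$ or $\Gamma_d$ is required.
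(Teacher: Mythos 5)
Your proposal is correct and follows essentially the same route as the paper: the paper's proof of this theorem simply defers to Theorem~\ref{Thm.effi1}, whose argument is exactly your construction of the feasible point $p_1=1$, $\lambda_k'=1$, all slacks and auxiliary variables zero, giving $w_k^{1*}\ge 1$ and hence ${E^b}_k^{1}\in(0,1]$. Your write-up merely carries out explicitly the feasibility bookkeeping over the $\kappa$ and $\mu$ blocks that the paper leaves implicit, which is a faithful (indeed slightly more careful) rendering of the intended proof.
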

	\begin{proof}
	    The proof of this theorem is also same as Theorem-\ref{Thm.effi1}.
	\end{proof}
	
	Further, the following theorem presents a tractable formulation of the second stage robust SBM model under budget uncertainty as:

		\begin{thm}
		Computationally tractable formulation of the second stage robust SBM model \eqref{09} under the Budget uncertainty sets $\mathcal{U}_{\Gamma_{r_1}}=\left\{\zeta~|~~~\|\zeta\|_{1} \le \Gamma_{r_{1}} ,~\forall ~r_1\right\}$, $\mathcal{U}_{\Gamma_{r_2}}=\left\{\zeta~|~~~\|\zeta\|_{1} \le \Gamma_{r_{2}} ,~\forall ~r_2\right\}$ and
	$\mathcal{U}_{\Gamma_{d}}=$ $\left\{\zeta~|~\|\zeta\|_{1} \le \Gamma_{d} ,~\forall ~d\right\}$,  is equivalent to the following linear optimization problem:
		\begin{flalign}
			&\max w_k^2& \nonumber\\
			&\mbox{subject to  } \Bigg\{w_k^{2}-p_2-\dfrac{1}{(s_1+s_2)}{\left(\sum\limits_{r_1=1}^{s_1}{{c_{r_1k}}}-\sum\limits_{r_2=1}^{s_2}{{c_{r_2k}}}\right)}\leq 0,\,\,p_2-{\dfrac{1}{D}\left({{\sum\limits_{d=1}^{D}{{b_{dk}}}}}\right)}\leq 1,\nonumber&\\
			& -p_2 z_{dk}^{0}+\sum\limits_{j=1}^{n}z_{dj}^{0}\lambda_j'+S_{dk}+ \Gamma_{d}^{z}\vartheta_{d}^{z}+\sum\limits_{\ell=1}^{L} \eta_{d\ell}^{z} \leq  0,~~\forall~ d,  \,\,\, \vartheta_{d}^{z}+\eta_{d\ell}^{z} \ge -p_2 z_{dk}^{\ell}+ \sum\limits_{j=1}^{n} z_{dj}^{\ell}\lambda_j', ~~\forall~ d, \ell,  \nonumber &\\
				& p_2 y_{r_1k}^{0} -\sum\limits_{j=1}^{n}y_{r_1j}^{0}\lambda_j'+{{S_{r_1k}^+}}+ \Gamma_{r_{1}}^{y}\vartheta_{r_{1}}^{y}+\sum\limits_{\ell=1}^{L} \eta_{r_{1}\ell}^{y} \leq 0,\,\,\,\,\forall r_1,\,\,\, \vartheta_{r_{1}}^{y}+ \eta_{r_{1}\ell}^{y} \ge  p_2 y_{r_1k}^{\ell}-\sum\limits_{j=1}^{n}y_{r_1j}^{\ell}\lambda_j',~~\forall~ r_{1}, \ell, &\nonumber\\
			& -p_2 y_{r_2k}^{0} +\sum\limits_{j=1}^{n}y_{r_2j}^{0}\lambda_j'+{{S_{r_2k}^+}}+\Gamma_{r_{2}}^{y}\vartheta_{r_{2}}^{y}+\sum\limits_{\ell=1}^{L} \eta_{r_{2}\ell}^{y} \leq 0,\,\,\,\,\forall r_2,\,\,\, \vartheta_{r_{2}}^{y}+ \eta_{r_{2}\ell}^{y} \ge -p_2 y_{r_2k}^{\ell} + \sum\limits_{j=1}^{n}y_{r_2j}^{\ell}\lambda_j',~~\forall~ r_{2}, \ell, \nonumber &
			\end{flalign}
			\begin{flalign}
			& p_2-\sum\limits_{j=1}^{n}{\lambda_j'}= 0,\,\, y_{r_1k}^{0}{c_{r_1k}}+\Gamma_{r_{1}}\vartheta_{r_{1}}^{1}+ \sum\limits_{\ell=1}^{L}\eta_{r_{1}\ell}^{1}-{S_{r_1k}^+}  \leq 0,\,\,\,\,\forall r_1,\,\,\,  \vartheta_{r_{1}}^{1}+ \eta_{r_{1}\ell}^{1} \ge  y_{r_1k}^{\ell}c_{r_1k} , \,\,\,\,\forall r_1, \ell,\nonumber &\\	
			&   y_{r_2k}^{0}{c_{r_2k}}+\Gamma_{r_{2}}\vartheta_{r_{1}}^{2}+ \sum\limits_{\ell=1}^{L}\eta_{r_{2}\ell}^{2}-{S_{r_2k}^+} \leq 0,\,\,\,\,\forall r_2,\,\,\,\vartheta_{r_{1}}^{2}+ \eta_{r_{2}\ell}^{2} \ge y_{r_2k}^{\ell}c_{r_2k}= 0, \,\,\,\,\forall r_2, \ell, \nonumber& \\
			&z_{dk}^{0}{b_{dk}}+\Gamma_{d}\vartheta_{d}^{3}+ \sum\limits_{\ell=1}^{L}\eta_{d\ell}^{3}-{S_{dk}} \leq 0, \,\,\,\,\forall d,\,\,\,\vartheta_{d}^{3}+ \sum\limits_{\ell=1}^{L}\eta_{d\ell}^{3} \ge z_{dk}^{\ell}b_{dk} =0 \,\,\,\,\forall d, \ell, \nonumber & \\
				&  \vartheta_{r_{1}}^{y}, \vartheta_{r_{2}}^{y},  \vartheta_{d}^{z}, \eta_{r_{1}\ell}^{y}, \eta_{r_{2}\ell}^{y}, \eta_{d\ell}^{z} \ge 0\,\,\,\,\forall r_{1}, r_2, d, \ell,\,\,\, \vartheta_{r_{1}}^{1}, \vartheta_{r_{2}}^{2},  \vartheta_{d}^{3}, \eta_{r_{1}\ell}^{1}, \eta_{r_{2}\ell}^{2}, \eta_{d\ell}^{3} \ge 0\,\,\,\,\forall r_{1}, r_2, d, \ell,\nonumber &\\
			&p_2 >0;\,\,\lambda_j'= 0, \,\forall j, \,\,\,{S_{dk}}\geq 0,\,\,\,{S_{r_1k}^+}\geq 0,\,\,\,{S_{r_2k}^+}\Bigg\} &\label{mod1.0015}
		\end{flalign}
	\end{thm}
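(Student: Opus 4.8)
The plan is to mirror the three-step argument of Theorem \ref{theoremR1}, since the budget case is the closest linear analogue of the polyhedral case: in both, the inner maximization over the uncertainty set is a linear program, so the robust counterpart is again a linear program. The only structural change is that the polyhedral dual is replaced by the cardinality-constrained (Bertsimas--Sim) dual, which is precisely the strong-duality step invoked in the first-stage budget theorem.

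First (\emph{worst-case scenario}) I would isolate each of the six uncertain constraints of the second stage robust model \eqref{09}: the intermediate-balance constraint in $z_{dk}$, the desirable-output constraint in $y_{r_1k}^D$, the undesirable-output constraint in $y_{r_2k}^U$, and the three linearization constraints $z_{dk}b_{dk}-S_{dk}\le 0$, $y_{r_1k}c_{r_1k}-S_{r_1k}^+\le 0$, $y_{r_2k}c_{r_2k}-S_{r_2k}^+\le 0$. Substituting the affine perturbations $z_{dk}(\zeta)=z_{dk}^0+\sum_\ell\zeta_\ell z_{dk}^\ell$ (and likewise for $z_{dj}$, $y_{r_1j}$, $y_{r_2j}$), each constraint takes a nominal-plus-protection form in which the protection term is an inner maximization $\max_{\zeta\in\mathcal{U}_\Gamma}\sum_\ell\zeta_\ell d_\ell$ over the budget set $\{\zeta:\|\zeta\|_1\le\Gamma,\ \|\zeta\|_\infty\le1\}$, with $d_\ell$ the relevant signed deviation coefficient, e.g. $d_\ell=-p_2 z_{dk}^\ell+\sum_j z_{dj}^\ell\lambda_j'$ for the intermediate constraint.

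Second (\emph{dual approach}) I would note that each inner problem is a bounded linear program in $\zeta$, so strong duality applies. Assigning a dual variable $\vartheta\ge0$ to the budget constraint $\|\zeta\|_1\le\Gamma$ and dual variables $\eta_\ell\ge0$ to the box constraints $|\zeta_\ell|\le1$ gives the dual $\min\{\Gamma\vartheta+\sum_\ell\eta_\ell:\ \vartheta+\eta_\ell\ge d_\ell,\ \vartheta,\eta_\ell\ge0\}$. Substituting this value back and discarding the minimization — since a single feasible dual point suffices to certify each inequality — converts every uncertain constraint into a linear inequality carrying the protection term $\Gamma\vartheta+\sum_\ell\eta_\ell$ together with the coupling inequalities $\vartheta+\eta_\ell\ge d_\ell$. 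Performing this for all six constraints generates exactly the auxiliary families $\vartheta_d^z,\eta_{d\ell}^z$; $\vartheta_{r_1}^y,\eta_{r_1\ell}^y$; $\vartheta_{r_2}^y,\eta_{r_2\ell}^y$; and $\vartheta_{r_1}^1,\vartheta_{r_2}^2,\vartheta_d^3$ with their $\eta^1,\eta^2,\eta^3$ that appear in \eqref{mod1.0015}.

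Third (\emph{robust counterpart}) I would collect these linear inequalities together with the unchanged certain constraints (the objective-defining inequality, $p_2-\tfrac1D\sum_d b_{dk}\le1$, $p_2-\sum_j\lambda_j'=0$, and the sign restrictions) to recover the linear program \eqref{mod1.0015}. The main obstacle is not any individual duality step, each being routine once the inner LP is identified, but the bookkeeping: six distinct uncertain constraints, each with its own budget parameter and its own block of $\vartheta,\eta$ variables, and the desirable and undesirable outputs entering with opposite signs. One must therefore verify that the deviation coefficient $d_\ell$ fed into each coupling constraint $\vartheta+\eta_\ell\ge d_\ell$ carries the correct sign before assembling the constraints into the final model.
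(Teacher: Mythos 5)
Your proposal is correct and takes essentially the same route as the paper: the paper's own proof is a one-line appeal to strong duality theory, and your three-step argument (worst-case reformulation, Bertsimas--Sim dualization of the inner LP over the budget-with-box set yielding the $\Gamma\vartheta+\sum_\ell\eta_\ell$ protection terms and coupling constraints $\vartheta+\eta_\ell\ge d_\ell$, then assembly) is precisely that strong-duality argument written out in the pattern of Theorem \ref{theoremR1}. Your closing caution about verifying the sign of each deviation coefficient $d_\ell$ is well placed, since the exact Bertsimas--Sim dual carries $\vartheta+\eta_\ell\ge |d_\ell|$ and the one-sided form you (and the paper's model \eqref{mod1.0015}) use coincides with it only under the sign conventions implicit in the paper's data.
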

	\begin{proof}
	    One can prove the desired result using the strong duality theory.
	\end{proof}
	\begin{definition}
		Let ${E^b}_k^{2}=1/w_k^2$ be the second-stage efficiency in budget perturbation in Model (\ref{mod1.0015}). Let ${E^b}_k^{2*}$ be the optimal value of ${E^b}_k^{2}$. Then $DMU_{k}$ is called second-stage efficient in budget perturbation if ${E^b}_k^{2*}  = 1$; otherwise, $DMU_{k}$ is called second-stage inefficient in budget perturbation.
	\end{definition}
\begin{definition}
The overall efficiency, ${E^b}_k^{O}$, of $DMU_k$ is determined using first and second stage efficiencies ${E^b}_k^{O*}={E^b}_k^{1*}\times {E^b}_k^{2*}$. $DMU_{k}$ is called efficient if ${E^b}_k^{O*}  = 1$, i.e. iff ${E^b}_k^{2*}=1$ and ${E^b}_k^{2*}=1$; otherwise, $DMU_{k}$ is called inefficient in budget perturbation.
	\end{definition}
	
	\begin{thm}\label{Thm.effi8}
	If $(p_2^{*},~ \lambda_j^{*'},{S_{r_1k}^{*+}},~S_{r_2k}^{*+},~S_{dk}^{*},\vartheta_{r_{1}}^{1*}, \vartheta_{r_{2}}^{2*},  \vartheta_{d}^{3*}, \eta_{r_{1}\ell}^{1*}, \eta_{r_{2}\ell}^{2*}, \eta_{d\ell}^{3*}, \vartheta_{r_{1}}^{*y}, \vartheta_{r_{2}}^{*y},  \vartheta_{d}^{*z}, \eta_{r_{1}\ell}^{*y}, \eta_{r_{2}\ell}^{*y}, \eta_{d\ell}^{*z})$ is the optimal solution of the model (\ref{mod1.0015}), then $w_{k}^{2} \ge 1$ and consequently ${E^b}_k^{2} \in (0,~1]$.
	\end{thm}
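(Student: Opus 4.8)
The plan is to reproduce the argument of Theorem~\ref{Thm.effi1}, now for the budget model~\eqref{mod1.0015}. Because the objective is $\max w_k^2$, it is enough to exhibit a single feasible point at which $w_k^2=1$: the optimal value is then automatically at least $1$, which is exactly the assertion $w_k^{2}\ge 1$. First I would take the ``self-evaluator'' solution for $DMU_k$, namely $p_2=1$, $\lambda_k'=1$ and $\lambda_j'=0$ $(j\neq k)$, all slacks $S_{dk}=S_{r_1k}^+=S_{r_2k}^+=0$, all scaled-deviation variables $b_{dk}=c_{r_1k}=c_{r_2k}=0$, and every auxiliary budget variable $\vartheta_{\cdot}^{\cdot}=\eta_{\cdot\ell}^{\cdot}=0$.

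Next I would check feasibility constraint by constraint. For the structural (in)equalities only the $j=k$ term of each $\sum_{j}(\cdot)\lambda_j'$ survives, so the intermediate constraint becomes $-z_{dk}^{0}+z_{dk}^{0}=0\le0$ and the desirable/undesirable output constraints likewise reduce to $0\le0$, while the normalization $p_2-\sum_j\lambda_j'=1-1=0$ holds. The constraints genuinely new to the budget set are the epigraph inequalities $\vartheta+\eta\ge(\cdot)$: at the chosen point their right-hand sides equal $\sum_{j}z_{dj}^{\ell}\lambda_j'-p_2 z_{dk}^{\ell}=0$ (and the analogous $y^{\ell}$ expressions), while the inequalities $\vartheta+\eta\ge y_{r_ik}^{\ell}c_{r_ik}$ and $\vartheta+\eta\ge z_{dk}^{\ell}b_{dk}$ have vanishing right-hand sides once $b=c=0$, so each reduces to $0\ge0$. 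Hence the point is feasible, and with $c_{r_1k}=c_{r_2k}=0$ the first constraint reads $w_k^2\le p_2=1$; since $w_k^2$ occurs in no other constraint, $w_k^2=1$ is attainable there, which yields $w_k^{2}\ge1$ at optimality.

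Finally, $w_k^{2}\ge1>0$ makes $\mbox{Eff}_k^2=1/w_k^2$ well defined with $\mbox{Eff}_k^2\in(0,1]$; equivalently, one can argue as in Theorem~\ref{Thm.effi1} that $w_k^2=0$ forces a contradiction such as $-y_{r_1k}^{0}\ge0$ when the nominal data are positive. The scheme is identical to the second-stage ellipsoidal and polyhedral results (Theorems~\ref{Thm.effi4} and~\ref{Thm.effi6}) and to the first-stage budget result (Theorem~\ref{Thm.effi7}). I expect the only delicate step---the main obstacle---to be the bookkeeping that all the extra dual variables $\vartheta$ and $\eta$ may simultaneously be set to zero; this is legitimate precisely because the self-evaluator makes every deviation term vanish, so no robust ``protection'' is required. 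A minor caveat is that strict positivity of $\mbox{Eff}_k^2$ relies on positivity of the nominal intermediate and output values, the standing assumption behind the paper's negative-data treatment.
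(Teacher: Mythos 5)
Your proposal is correct and follows exactly the paper's intended route: the paper proves this theorem by declaring it ``similar to Theorem~\ref{Thm.effi1},'' whose argument is precisely your self-evaluator construction ($p_2=1$, $\lambda_k'=1$, all slacks, scaled deviations, and auxiliary budget variables zero), yielding a feasible point with $w_k^2=1$ and hence $w_k^{2*}\ge 1$, plus the same contradiction argument ruling out $w_k^2=0$ so that ${E^b}_k^{2}\in(0,1]$. Your constraint-by-constraint verification (and the caveat about positivity of the nominal data, as well as reading the paper's evident typo $\lambda_j'=0$ as $\lambda_j'\ge 0$) is in fact more explicit than the paper's one-line proof, but it is the same argument.
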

	\begin{proof}
	    The proof of this theorem is similar to Theorem-\ref{Thm.effi1}.
	\end{proof}

Further, the budget parameter regulates the number of variables that can take their maximum fluctuations simultaneously, allowing for a balance between optimality and robustness. Although in the above SBM formulations, the exact value of $\Gamma$ is applied to all DMUs, the decision-maker may assign different values of $\Gamma$ to the DMUs, reflecting their knowledge of the DMUs and attitude toward risks.	\\

	\section{Case Study: Indian Banks}
In modern India, the first bank was originated in the $18^{th}$ century. In 1770, the first bank of India was created and was named Bank of Hindustan. In 1786, the General Bank of India was originated, and this was failed in the year 1791. In 1806, the Bank of Calcutta was originated in Kolkata, which is in the West of Bengal in India, and this bank is renamed as Bank of Bengal in the year 1809. In 1840, the Bank of Bombay was originated in Bombay, Maharastra. In 1843, the Bank of Madras was originated in Chennai. The Indian presidency government founded the bank of Bengal, Bank of Bombay, and Bank of Maharastra. These presidency banks were working as quasi-central Banks. In 1921, the Imperial Bank of India was originated, and it was the merger of these three banks. In 1935, the Reserve Bank of India was created under the Act (1934) of India's Reserve Bank (R.B.I.). After India's Independence, the Imperial Bank of India was renamed as State Bank of India (S.B.I.) in the year 1955. S.B controlled India's eight banks. in 1960 under the Act (1959) of S.B.I. Now, these banks are called associate banks of S.B.I. The primary regulator and strategy developer of Indian banking are R.B.I., and the currency of India is Rupee. The supply, issue, and credit of rupee are controlled by R.B.I., the central bank of India. R.B.I carries the exercises supervision and monetary policy of India's banks and non-banking finance companies. Banks in India are classified into two types of banks scheduled and non-scheduled. The scheduled banks are organized in S.B.I. and its associates, nationalized, private sector, foreign banks, and regional rural banks.
	Now, Indian Banks are classified into commercial, payment, small finance, and co-operative banks.
	\begin{itemize}
		\item { Commercial Banks: } Commercial banks provide the business for profit of banks like offering investment products, making business loans, and accepting deposits. Commercial banks are considered in both scheduled and non-scheduled banks under the Act (1949) of Banking regulation. Commercial banks are classified in the Public sector, Regional rural, Private sector, Foreign banks.
		\begin{itemize}
			\item {Public Sector Banks:} The major bank in India is Public Sector Banks. The Indian government has a stake larger than 50\%, i.e., majority stake is held by the Indian government R.B.I. The Public sector banks have 22 Indian banks.
			\item {Private Sector Banks:} The Private sector banks in India are operated by private companies or private individuals, i.e., private individuals hold the majority of the share capital bank. The private sector has full shareholders. The private sector banks have 20 Indian banks.
		\end{itemize}
		\item {Payment Banks: } It accepts the deposit with the limit of 1,00,000 Rs per customer. It deals with saving accounts and current accounts. Also, it deals the debit cards, mobile and internet banking. The first payment bank of India is Bharti Airtel. It does not deal the credit cards and loans.
		\item {Small Finance Banks: } It deals the account lending, deposits, small business units such as unorganized sector units, small and micro industries.
		\item {Cooperative Banks: } It deal the retail banking, credit unions, building societies, saving banks.
	\end{itemize}

Banks play a critical part in the development of every country's economy. India has banks as well. The safety of public wealth, the propellant of the economy, the availability of low-cost loans, worldwide reach, rural development, and large-scale economies are all significant benefits of banks. This section uses data from 42 banks in India to illustrate the applicability of the robust DEA models. The banks examined are a set of Indian banks with branches operating all over India. In banking studies, there are two main approaches to evaluating input and output measures: both an intermediary and a production approach \cite{berger1997efficiency}. Banks are seen as intermediaries between savers and investors, primarily transferring capital and employee (inputs) to loans and securities (outputs) by the former \cite{fethi2010assessing}. In terms of the right way to use in literature, there is no universal consensus.  \cite{mostafa2009modeling} surveyed 26 research papers on the banking industry from various countries in order to pick the most suitable bank features. On the percentage of regular selection of these banking steps proposed in \cite{mostafa2009modeling}, employees are typically viewed as input (fixed input). However, the deposit is handled differently in banking studies;. At the same time, 15.38 percent of research papers used it as input to measure deposit, 26.92 percent of the surveyed papers used it as output to measure deposit \cite{lu2015robust}. As a consequence, the raw data are scaled for homogeneity and to minimize round-off errors in DEA models caused by extremely large values \cite{thanassoulis2001introduction}. Uncertainties compelling volatility in banks particular variables were considered in order to determine the efficiency and complexity of the robust. Uncertainties in the banking sector can arise from forecast things like loan and deposit, missing values, and calculation errors, among other things. If any of the input or output data of a DMU is uncertain, it is defined as uncertain DMU.



As a consequence, the raw data are scaled for homogeneity and to minimize round-off errors in DEA models caused by huge values \cite{thanassoulis2001introduction}. Uncertainties compelling volatilities in banks particular variables were considered in order to determine the efficiency and complexity of the robust DEA model in comparison to the DEA model. Uncertainties in the banking sector can arise from forecast things like loan and deposit, missing values, and calculation errors, among other things. If any of the inputs or outputs data of a DMU is uncertain, it is defined as uncertain DMU.

Banks are divided in two stages, first stage is pre-acquisition stage, in this stage inputs are number of employees ($x_{1}$), interest expenses ($x_{2}$), operating expenses ($x_{3}$) and Assets ($x_{4}$) and output is Deposits ($z_{1}$); and second is profit stage, in this input is Deposits ($z_{1}$) and outputs are Total income ($y_{1}$), Loans ($y_{2}$), Debts ($y_{3}$), profit/Loss ($y_{4}$), and NPAs (undesirable) ($y_{5}$). \\

	
	Mathematical terminology which is used in this paper for Indian Bank data is given as follows:
	
	\begin{itemize}
	    \item 	Employees: The number of working employees in each public and private sector bank.
	
	    \item Interest expenses: It includes interest money borrowed, R.B.I./inter-bank borrowings, operation expenses, and miscellaneous costs.
	    \item Operating expenses: It takes the R.B.I./inter-bank borrowings, operating expenses.
	    \item 	Assets: It considers cash on hand, R.B.I. balances, balances with Indian banks, money on call and short notice, balances with foreign banks, investments, and other assets.
	
	    \item      Deposits: It accepts demand deposits from banks and other financial institutions and saves bank deposits and term deposits from banks and other financial institutions.
	
         \item Total income: It includes interest/discounts on advances/bills, investment income, interest on R.B.I. and other inter-bank funds, and interest gained on various gains.

         \item 	Loans: It takes loans subjected to restructuring (standard assets, sub-standard assets, doubtful assets during the year).

		\item Debts: It takes the corporate debt restructured (standard, sub-standard, doubtful assets during the year).
		
	    \item Profit/Loss: It takes the net profit or net loss during the year.
	
	    \item Non-performing assets (NPAs): It takes addition, reduction, and write-off during the year.
	
	\end{itemize}
	\begin{figure}[!htb]
		\centering
		\includegraphics[width=0.65\linewidth]{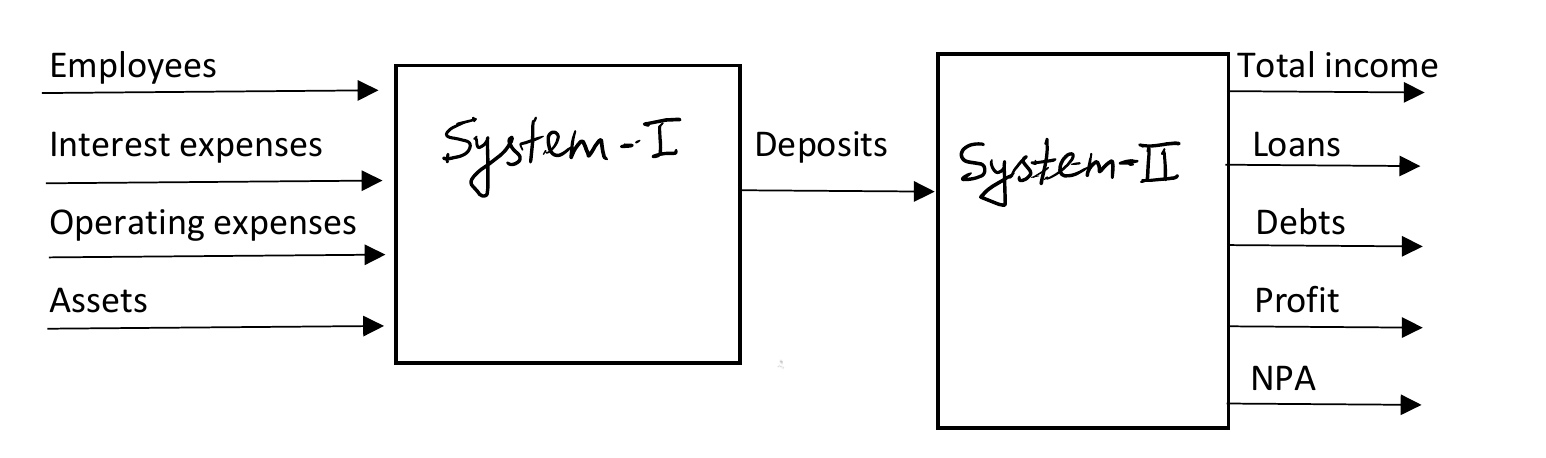}
		\caption{Graphical representation of Banks as two-stage}
		\label{fig:appnetwork2stage}
	\end{figure}
The descriptive statistics of 42 Indian banks' analysis of data collected is represented in Table \ref{tab:appstat}.

		\begin{table}[!htb]
	\centering
	\caption{\textbf{Descriptive Stat. for 42 Indian Banks}}\label{tab:appstat}
	\resizebox{\textwidth}{!}{
		\begin{tabular}{llllllllll}
			\hline
	Data variables & Mean & Standard Error & Median & Standard Deviation & Kurtosis & Skewness & Minimum & Maximum & Sum \\\hline
	Employees $(x_1)$ & 30165.54762 & 6637.095357 & 18233 & 43013.294 & 21.68767066 & 4.191538662 & 844 & 264041 & 1266953 \\
	Interest expended $(x_2)$ & 149864.8095 & 35581.59925 & 100000.5 & 230595.1184 & 25.98282779 & 4.66774452 & 4114 & 1456456 & 6294322 \\
	Operating expenses $(x_3)$& 59716.88095 & 15071.13852 & 30562 & 97672.14076 & 23.42008693 & 4.446734878 & 1437 & 599434 & 2508109 \\
	Assets $(x_4)$ & 3412872.31 & 856951.5095 & 2188412 & 5553680.524 & 24.82204601 & 4.562995354 & 81147 & 34547520 & 143340637 \\
	Deposits $(z_1)$ & 2684764.262 & 666553.997 & 1695684 & 4319763.616 & 25.54323925 & 4.634198403 & 73319 & 27063433 & 112760099 \\
	Total income $(y_1)$ & 273141.1429 & 66299.23182 & 170961 & 429668.13 & 23.52052708 & 4.433238318 & 6528 & 2651000 & 11471928 \\
	Loans $(y_2)$ & 51909.80952 & 12048.28308 & 21218.5 & 78081.7985 & 11.45468592 & 2.995793568 & 2 & 417313 & 2180212 \\
	Debts $(y_3)$ & 21909.42857 & 4838.774529 & 6809 & 31358.84302 & 11.45624445 & 2.787250858 & 214 & 170779 & 920196\\
	Profit/Loss $(y_4)$ & 81219.09524 & 10281.98666 & 100940 & 66634.88941 & -1.725655474 & -0.00185079 & 489 & 174867 & 3411202 \\
	NPAs $(y_5)$ & 123498.881 & 29569.84988 & 54904 & 191634.5296 & 16.97668662 & 3.620016142 & 383 & 1108547 & 5186953 \\\hline
	\end{tabular}}
		\small	Data is collected by Centre for Monitoring Indian Economy Pvt. Ltd. (CMIE).
\end{table}


	In general,  all these input and output parameters describe above are not explicitly fixed. The reason behind data uncertainty has already been explored in the introduction. Consequently, in these cases, there's always the potential that the data would deviate. In this case study, we consider two distinct sorts of potential deviations: $10\%$ of the nominal data and $20\%$ of the nominal data. Because there are two types of possible deviations, $L=2$. Further, we assume that information about deviation vectors is contained in several forms of uncertainty sets, such as ellipsoidal, polyhedral, and budget uncertainty sets. For the fixed $k$, uncertainty set is defined as:
	\begin{eqnarray}\label{mod2.0015}
	\mathcal{U}_{x} =\left\{ (x_{ik}, x_{ij}) ~|~ x_{ik}^{0}+\sum_{\ell=1}^{2}\zeta_{\ell}x_{ik}^{\ell}, ~ x_{ij}^{0}+\sum_{\ell=1}^{2}\zeta_{\ell}x_{ij}^{\ell}, ~~\zeta \in \mathcal{Z}_{i} ~\forall~i\right\}\\\label{mod2.0016}
	\mathcal{U}_{d} =\left\{ (z_{dk}, z_{dj}) ~|~ z_{dk}^{0}+\sum_{\ell=1}^{2}\zeta_{\ell}z_{dk}^{\ell}, ~ z_{dj}^{0}+\sum_{\ell=1}^{2}\zeta_{\ell}z_{dj}^{\ell}, ~~\zeta \in \mathcal{Z}_{d}, ~\forall~d\right\}
	\end{eqnarray}
	where $\zeta $ is a perturbation or deviation vector, $\mathcal{Z}_{x} \subset \mathcal{U}_{x}\in \mathcal{R}^{2} \hbox{  and } \mathcal{Z}_{d} \subset \mathcal{U}_{d} \in \mathcal{R}^{2}$ are the prescribed uncertainty set.
	

\subsection{\bf Ellipsoidal uncertainty set}
In this portion, we consider that the perturbation vector of the input and output data of all the 42 Indian banks are given in the form of an ellipsoidal uncertainty set for both stages of the SBM model. Mathematically, the ellipsoidal set is defined as:

\begin{equation*}
    \mathcal{Z}_{i}=\left\{\zeta~|~\sqrt{\zeta_{1}^2+\zeta_{2}^2} \le \Omega_{i},~\forall ~i\right\} \hbox{ and } \mathcal{Z}_{d}=\left\{\zeta~|~\sqrt{\zeta_{1}^2+\zeta_{2}^2} \le \Omega_{d},~\forall ~d\right\}.
\end{equation*}

Now, taking unit ellipsoidal uncertainty set, i.e. ($\Omega_{i}=\Omega_{d}=1, ~\forall~i,d$) and using Lingo 11.0 software, we solved both models using nominal and deviated data in the first and second stages of tractable SBM models  \eqref{mod1.14} and \eqref{Secondstageellip}, and computed the efficiency shown in the Figure-\ref{figR1}. Table \ref{tab:baeff} shows the efficiencies for all banks employing ellipsoidal uncertainty in RO, and columns 3 and 7 representing first and second stage efficiencies, respectively. In the ellipsoidal instance, the overall efficiencies for all banks are found by multiplying the efficiencies of the first and second stages because the first and second stages are connected in series, as shown in column 11.
\begin{table}[!htb]
		\centering
		\caption{\textbf{Efficiencies of the Banks}}\label{tab:baeff}
		\resizebox{\textwidth}{!}{
			\begin{tabular}{ccccccccccccc}
				\hline
				DMUs &	\multicolumn{4}{c}{First stage efficiency} &	\multicolumn{4}{c}{Second stage efficiency} &	 \multicolumn{4}{c}{Overall efficiency}\\\hline
				& \multirow{2}{*}{Crisp/Nominal SBM model} &   & Robust SBM model  &  & \multirow{2}{*}{Crisp/Nominal SBM model}  &  & Robust SBM model &   & \multirow{2}{*}{Crisp/Nominal SBM model}  &  & Robust SBM model &   \\ \cline{3-5} \cline{7-9} \cline{11-13}
				&                &  Ellipsoidal & Polyhedral & Budget  &  & Ellipsoidal & Polyhedral & Budget &   & Ellipsoidal & Polyhedral & Budget  \\ \hline
				AB    & 0.915143 & 0.923345 & 0.953017 & 0.915143 & 0.683844 &	0.747429 &	1 &	1        & 0.625815 & 0.690136 & 0.953017 &	 0.915143 \\
				ANB   & 1        & 1        & 1        & 1        & 0.760777 &	0.80889	 &  1 &	0.813979 & 0.76077  & 0.80889  & 1        & 0.813979 \\
				BoB   & 1        & 1        & 1        & 1        & 1        &  1	     &  1 &	1        & 1        &  1	   & 1        &	 1        \\
				BoI   & 1        & 1        & 1        & 1        & 0.827858 &	0.862479 &	1 &	0.827858 & 0.827857 & 0.862479 & 1        &	 0.827857 \\
				BoM   & 1        & 1        & 1        & 1        & 0.875011 &	0.900148 &	1 &	0.875011 & 0.87501  & 0.900148 & 1        &	 0.875011\\
				CB    & 0.934988 & 0.93993  & 0.965296 & 0.934988 & 0.908061 &	0.926551 &	1 &	1        & 0.873528 & 0.870893 & 0.965296 & 0.934988\\
				CBI   & 1        & 1        & 1        & 1        & 0.748257 &	0.798888 &	1 &	0.760988 & 0.748257 & 0.798887 & 1        &	 0.760988 \\
				CoB   & 0.899957 & 0.916468 & 0.968766 & 0.899957 & 1        &  1        &  1 &	1        & 0.899957 & 0.916468 & 0.968766 &	 0.899957 \\
				DB    & 0.831376 & 0.850984 & 0.970836 & 0.831376 & 1        &  1        &  1 &	1        & 0.831376 & 0.850984 & 0.970836 &	 0.831376 \\
				IDBI  & 1        & 1        & 1        & 1        & 1        &  1        &  1 &	1        & 1        &  1       &  1       &	 1        \\
				IB    & 0.97089  & 0.97663  & 0.988567 & 0.970891 & 0.675827 &	0.741025 &	1 &	0.720826 & 0.656155 & 0.723707 & 0.988567 &	 0.699843 \\
				IOB   & 0.847362 & 0.866638 & 1        & 0.847362 & 0.955082 &	0.964044 &	1 &	1        & 0.809301 & 0.835477 & 1        &	 0.847362 \\
				OBC   & 1        & 1        & 1        & 1        & 1        &  1        &  1 &	1        & 1        &  1       &  1       &	 1         \\
				PASB  & 1        & 1        & 1        & 1        & 0.683394 &	0.74707	 &  1 &	0.705094 & 0.683393 & 0.747069 & 1        &	 0.705094 \\
				PNB   & 1        & 1        & 1        & 1        & 0.674843 &	0.740239 &	1 &	0.67991  & 0.674843 & 0.740239 & 1        &	 0.679910 \\
				SBI   & 1        & 1        & 1        & 1        & 1        &  1        &  1 &	1        & 1        &  1       &  1       &	 1        \\
				SB    & 0.87749  & 0.880397 & 0.936408 & 0.867749 & 0.725866 &	0.781    &	1 &	0.74129  & 0.629870 &  0.68759 & 0.936407 &	 0.643254 \\
				UCOB  & 1        & 1        & 1        & 1        & 0.804125 &	0.84352	 &  1 &	0.804125 & 0.804125 & 0.843520 &	1     & 0.804125 \\
				UBI   & 1        & 1        & 1        & 1        & 0.814866 &	0.852101 &	1 &	0.814866 & 0.814866 & 0.852101 &	1     & 0.814866\\
				UnBoI & 1        & 1        & 1        & 1        & 1        &  1        &  1 &	1        &  1       &  1      &  1       &	 1   \\
				VB    & 1        & 1        & 1        & 1        & 0.523083 &	0.619001 &	1 &	1        & 0.523083 & 0.619001 & 	1 &	1 \\
				AXIS  & 0.712414 & 0.755616 & 1        & 0.712714 & 0.963365 &	0.970733 &	1 &	1        & 0.686604 & 0.733501 & 	1 &	 0.712714\\
				BBL   & 0.60703  & 0.640842 & 1        & 0.607603 &  1       &  1        &  1 &	1        & 0.606608 & 0.640842 & 	1 &	 0.607603 \\
				CSBL  & 1        & 1        & 1        & 1        & 0.90052  &	0.920527 &	1 &	1        & 0.900519 & 0.920527 &	1	&  1 \\
				CUBL  & 0.794045 & 0.815753 & 0.905729 & 0.794045 & 0.658396 &	0.7271   &	1 &	0.909114 & 0.522796 &	0.593134 & 	 0.905729008	& 0.721878\\
				DCB   & 0.687234 & 0.720167 & 1        & 0.687234 & 0.739176 &	0.791633 &	1 & 1        & 0.507961 &	0.570108 & 	1	        & 0.687234 \\
				FBL   & 0.878429 & 0.896505 & 0.972642 & 0.878529 & 0.493838 &	0.595637 &	1 &	1        & 0.433850 &	0.533992 & 	 0.972641539	& 0.878529 \\
				HDFC  & 0.846798 & 0.883893 & 1        & 0.846798 & 1        &  1        &  1 &	1        & 0.846798 &	0.883893 & 	1	        & 0.846798 \\
				ICICI & 0.71417  & 0.753078 & 1        & 0.714177 & 1        &  1        &  1 &	1        & 0.714177 &	0.753078 & 	1	        & 0.714177 \\
				IDFC  & 0.514533 & 0.593277 & 1        & 0.514533 & 1        &  1        &  1 &	1        & 0.514533 &	0.593277 & 	1	        & 0.514533 \\
			INDUSIND  & 0.65028  & 0.696719 & 1        & 0.650281 & 1        &  1        &  1 &	1        & 0.650281 &	0.696719 & 	1	        & 0.650281 \\
				JAKBL & 1        & 1        & 1        & 1        & 0.601617 &	0.68174  &	1 &	0.707071 & 0.601617 &	0.681739 & 	1	        & 0.707070 \\
				KBL   & 0.870011 & 0.881709 & 0.958245 & 0.870011 & 0.512722 &	0.610724 &	1 &	0.890941 & 0.446074 &	0.538481 & 	 0.958245414	& 0.775128 \\
				KVBL  & 0.864648 & 1        & 0.956739 & 0.804649 & 0.583563 &	0.667318 &	1 &	1        & 0.469563 &	0.667317 & 	 0.956739127	& 0.804648 \\
				KMBL  & 0.701251 & 0.736096 & 1        & 0.701250 & 0.866349 &	0.893229 &	1 &	0.868665 & 0.607528 &	0.657502 & 	1	        & 0.609152 \\
				LVBL  & 0.788489 & 0.816203 & 0.934744 & 0.788489 & 0.881718 &	0.905508 &	1 &	0.91497  & 0.695224 &	0.739078 & 	 0.934744548	& 0.721444 \\
				NBL   & 1        & 1        & 1        & 1        & 1        &  1        &  1 &	1        & 1        & 1        &  1        &  1  \\
				RBL   & 0.734734 & 0.778543 & 1        & 0.734734 & 0.72431  &	0.779757 &	1 &	1        & 0.532175	& 0.607075 & 	1	        & 0.734734 \\
				SIBL  & 0.917022 & 0.925750 & 0.957092 & 0.917022 & 0.486904 &	0.590099 &	1 &	1        & 0.446502	& 0.546283 &	 0.957092582	& 0.917022 \\
				TMBL  & 0.834759 & 0.854635 & 0.940642 & 0.834759 & 0.642422 &	0.714338 &	1 &	1        & 0.536267	& 0.610499 &	 0.940642628	& 0.834759\\
				TDB   & 0.797561 & 0.819232 & 1        & 0.797561 & 1        &  1        &  1 &	1        & 0.797561	& 0.819232 &	1	        & 0.797561 \\
				YES   & 0.815035 & 0.852235 & 1        & 0.815035 & 0.903366 &	0.922801 &	1 &	1        & 0.736275	& 0.786444 &	1	        & 0.815035 \\\hline
		\end{tabular}}

		\end{table}

 \begin{figure}[!htb]
		    \centering
		    \includegraphics[width=17cm, height=6cm]{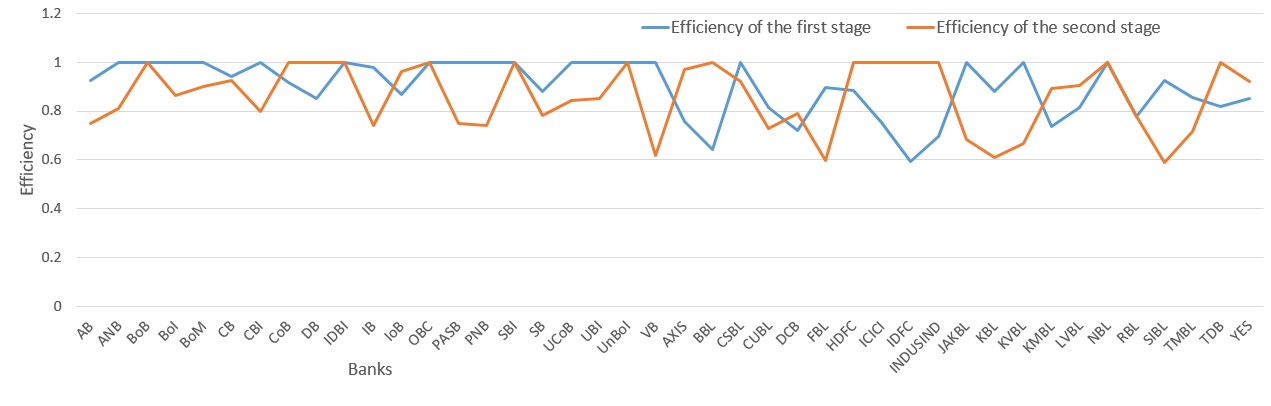}
		    \caption{Efficiencies for both stage SBM model under ellipsoidal uncertainty set}
		    \label{figR1}
		\end{figure}

The model's efficiency plays an essential part in the SBM framework. If the efficiency of any model in this framework is $1$, it implies that the model is efficient and has no chance of improvement. However, if the efficiency score is between $0$ to $1$, the model can improve and is considered praiseworthy under the DMUs. From the Figure-\ref{figR1} one may observe that a total of $18$ banks (ANB, BoB, BoI, BoM, CBI, IDBI, OBC, PASB, PNB, SBI, UCoB, UBI, UnBoI, VB, CSBL, JAKBL, KVBL, and NBL) are efficient in the first stage of the robust SBM model under the ellipsoidal uncertainty set. In contrast, total $14$ banks (BoB, CoB, DB, IDBI, OBC, SBI, UnBoI, BBL, HDFC, ICICI, IDFC, INDUS, NBL, TDB) are efficient in the second stage. Aside from that, the efficiency of the remaining models in both stages lies in $(0,1)$, with no one model being insufficient.

In the first stage robust SBM model, the efficiency of IDFC banks is $\textbf{0.593277}$, which is the lowest among all of them. Therefore, compared to other banks whose efficiency ranges from $0$ to $1$, the possibilities of improvement for this bank are pretty high. On the other hand, in the second stage efficiency of SIBL banks is $\textbf{0.590099}$, which is the lowest among all of them. Therefore, compared to other banks, the possibilities of improvement for this bank are quite high. Bank ANB is efficient in the first stage, in which the first stage's inputs are fully utilized to produce the first stage's outputs (intermediates), but inefficient in the second stage, in which the second stage's inputs (intermediates) are not fully utilized to produce the final outputs, resulting in overall inefficiency. As a result, the ANB is not effectively utilizing intermediates although this cannot be determined using black-box model.

\subsection{\bf Polyhedral uncertainty set}
Here, we consider that perturbation vector $\zeta_{1}$ and $\zeta_{2}$ of the input and output data are given in the form of a polyhedral uncertainty set for both stages robust SBM models. In this first stage robust SBM model, we consider only affine perturbation of the input and output parameters. Therefore, in this case all the matrices $H_{i}=H_{d}=\textbf{O}, ~\forall ~i,d$, where $\textbf{O} \in \mathcal{R}^{K \times L}$ denotes the zero matrix. As a result, uncertainty sets $\mathcal{Z}_{i}~\forall ~i,$ and $\mathcal{Z}_{d} ~\forall ~d$ will became empty sets.
However, in the second stage, the data regarding deposits and earning are affinely defined, thus corresponding the uncertainty sets $\mathcal{Z}_{r_{1}}~\forall ~r_{1},$ and $\mathcal{Z}_{d} ~\forall ~d$ become and empty and their nominal values are $y_{r_{1}j}^{0},~ \forall ~r_{1}, j$, $y_{dj}^{0},~ \forall ~d, j$ $y_{r_{2}j}^{0},~ \forall ~r_{2}, j$. Information about possible deviation has already been defined. In this stage, a few more restrictions have been placed in a few Banks. The names of those banks for $r_2=1$, i.e., constraint related Loans, are AB, ANB, PASB, VB, BBL, CSBL, CUBL, and DCB. Further, for $r_{2}=2$ i.e., constraint related debts, those banks names are ANB, PASB, VB, BBL, CSBL, SUBL, DCB, FBL, HDFC, INDUSIND, KVBL, NBL, RBL, SIBL, TDB, YES. Again, for $r_{2}=3$ i.e., constraint related profit and $r_{2}=4$ i.e., constraint related NPAs, banks names are AXIS, DCB, JAKBL, NBL and BBL, CSBL, DCB, NBL respectively. Thus, with the help of the equations \eqref{mod2.0015} and \eqref{mod2.0016}, the uncertainty set is defined as follows:
\begin{eqnarray*}
\mathcal{Z}_{r_{2}} = \left\{ \zeta~|~ H_{1}\zeta +q_{1}\ge 0,~ H_{2}\zeta +q_{2}\ge 0,~ H_{3}\zeta +q_{3}\ge 0,~ H_{4}\zeta +q_{4}\ge 0 \right\}
\end{eqnarray*}

where
\begin{eqnarray*}
H_{1}^{T} & = & \begin{bmatrix} 945 & 1649.7 & 636.7 & 49.7 & 49.7 & 55.8 & 84.8 & 68.3 \\
 1890 & 3299.4 & 1273.4 & 99.4 & 99.4 & 111.6 & 169.6 & 136.6 \\
\end{bmatrix} \\
H_{2}^{T} & = &
\scriptsize{\begin{bmatrix} 61.5 & 69.6 & 68.3 & 22.4 & 22.4 & 68.3 & 68.3 & 100.2 & 85.2 & 82.5 & 96.9 & 68.3 & 85.2 & 45 & 21.4 & 24.8\\
123 &  139.2 & 136.6 & 44.8 & 44.8 & 136.6 & 136.6 & 200.4 & 170.4 & 165 & 193.8 & 136.6 & 170.4 & 90 & 42.8 & 49.6 \\
\end{bmatrix}}\\
H_{3}^{T} & = & \begin{bmatrix} 275.7 &  245.3 & 202.7 & 48.9 \\
                                551.4 & 490.6 & 405.4 & 97.8
\end{bmatrix}\\
 H_{4}^{T}& = & \begin{bmatrix} 172.9 & 416.3 & 146.7 & 38.3 \\
                                345.8 & 832.6 & 293.4 & 76.6
\end{bmatrix}
\end{eqnarray*}

and
\begin{eqnarray*}
q_{1} & =& (1700,~ 503,~ 1633,~ 503,~ 703,~ 342,~ 152,~ 269) \\
q_{2} & =& (435,~ 431,~ 317,~ 326,~ 526,~ 317,~ 517,~ 489,~ 348,~ 198,~ 331,~ 217,~ 348,~ 150,~ 586,~ 452) \\
q_{3} & =& (443,~ 457,~ 473,~ 211) \\
q_{4} & =& (771,~ 837,~ 833,~ 417) \\
\end{eqnarray*}

where all the values of the matrix $H_{r_{2}}~\forall~ r_{2}$ are computed using $10\%$ and $20\%$ of the nominal value of those banks where extra conditions are being imposed, now, after putting all the information of the input and output data in both models \eqref{mod1.15} and \eqref{012}, we have computed the efficiencies of both stages model with the help of Lingo 11.0 software that has been shown in the Figure-\ref{figR2}. Table \ref{tab:baeff} shows the efficiencies for all banks employing polyhedral uncertainty in RO, and columns 4 and 8 representing first and second stage efficiencies, respectively. In the polyhedral instance, the overall efficiencies for all banks are presented in column 12.

\begin{figure}[!htb]
		    \centering
		    \includegraphics[width=17cm, height=6cm]{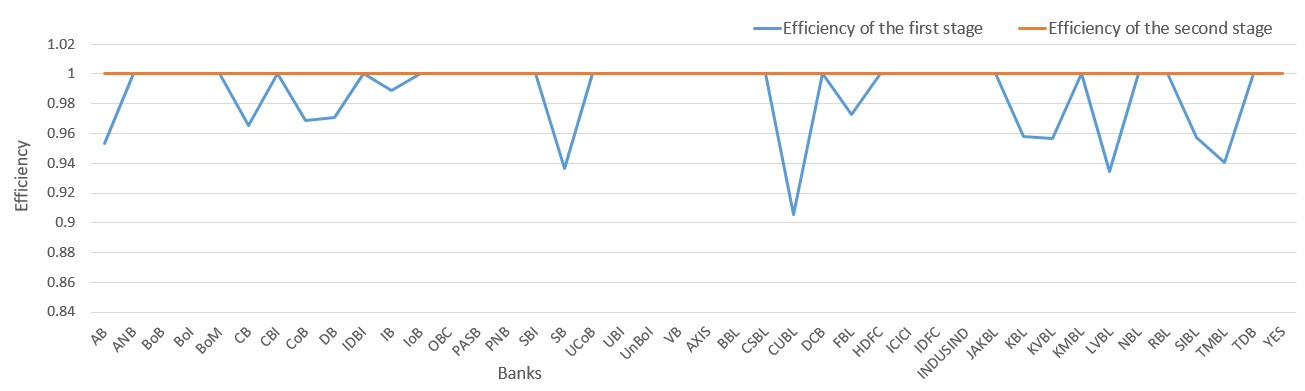}
		    \caption{Efficiencies for both stage SBM model under polyhedral uncertainty set}
		    \label{figR2}
		\end{figure}
	
From the Figure-\ref{figR2} one may observe that a total of $13$ banks (AB, CB, CoB, DB, IB, SB, CUBL, FBL, KBL, KVBL, LVBL, SIBL, TMBL) efficiency lies between $0$ to $1$ in which CUBL bank has the lowest efficiency (\textbf{0.905729}), whereas, in the second stage, all models of banks are efficient. Apart from that, no one model is insufficient. The proposed models give more information, for ex: Bank AB is inefficient in the first stage, in which the first stage's inputs are not fully utilized to produce the first stage's outputs (intermediates), but efficient in the second stage, in which the second stage's inputs (intermediates) are fully utilized to produce the final outputs, resulting in overall inefficiency. As a result, the AB does not effectively utilize initial inputs although this cannot be determined using the black-box model.

\subsection{\bf Budget uncertainty set}
Again, we consider that the perturbation vector of the input and output data is given in the form of a budget uncertainty set for both stages of the SBM models. Mathematically, the first stage of this set is defined as:
\begin{eqnarray*}
\mathcal{U}_{\Gamma_{i}}=\{\zeta~~|~~|\zeta_{1}|+|\zeta_{2}| \le \Gamma_{i}~~~\forall~ i=1,2,3,4\} \hbox{ and } \mathcal{U}_{\Gamma_{d}}=\{\zeta~~|~~|\zeta_{1}|+|\zeta_{2}| \le \Gamma_{d}~~~\forall~ d=1\}.
\end{eqnarray*}
Similarly, we can define the second stage uncertainty sets. The value of the budget parameters $\Gamma_i$ and $\Gamma_d$ can be select from the interval $[0, 42]$ since the first uncertain constraint of the first stage robust SBM model \eqref{mod1.015} includes 42 uncertain parameters. If all the values of budget parameter $\Gamma_{i}=\Gamma_{d}=0$, then the robust SBM model will transform into the crisp model. In contrast, if the value of  $\Gamma_{i}=\Gamma_{d}=42$, then the robust SBM model gives the feasible solution in the worst-case scenario. Similarly, we might choose alternative budget parameter values for the remaining uncertain constraints. The choice of value selection of budget parameters entirely depends on the intention of the decision-maker. In this case, tractable formulation of both stages robust SBM models is linear. Now, after putting all the information of the input and output data in both models \eqref{mod1.015} and \eqref{mod1.0015}, we have computed the efficiencies of both stages model with the help of Lingo 11.0 software that has been shown in the figure \ref{figR3}. Table \ref{tab:baeff} shows the efficiencies for all banks employing budget uncertainty in RO, and columns 5 and 9 representing first and second stage efficiencies, respectively. In the budget uncertainty set instance, the overall efficiencies for all banks are presented in column 13.
\begin{figure}[!htb]
		    \centering
		    \includegraphics[width=17cm, height=6cm]{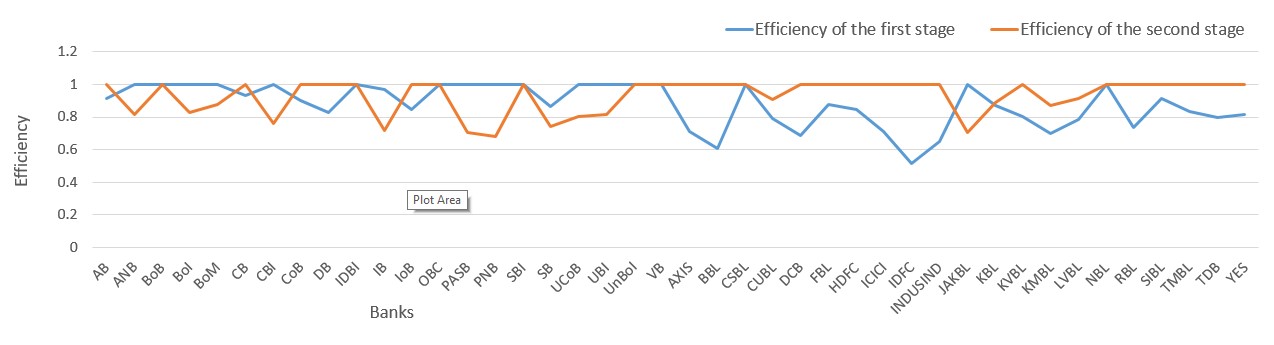}
		    \caption{Efficiencies for both stage SBM model under budget uncertainty set}
		    \label{figR3}
		\end{figure}

In Figure-\ref{figR3} one may observe that a total of $17$ banks (ANB, BoB, BoI, BoM, CBI, IDBI, OBC, PASB, PNB, SBI, UCoB, UBI, UnBoI, VB, CSBL, JAKBL, and NBL) are efficient in the first stage of the robust SBM model under the ellipsoidal uncertainty set, whereas total $27$ banks (AB, BoB, CB, CoB, DB
IDBI, IoB, OBC, SBI, UnBoI, VB, AXIS, BBL, CSBL, DCB, FBL, HDFC, ICICI, IDFC, INDUSIND, KVBL, NBL, RBL, SIBL, TMBL, TDB, and YES) are efficient in the second stage. Here, in the first stage, robust SBM model IDFC banks have the lowest efficiency ($\textbf{0.544534}$), whereas PNB banks have the lowest efficiency in the second stage robust SBM model ($\textbf{0.67991}$). As a result, in comparison to other banks with efficiency levels ranging from $0 \hbox{ to } 1$, the possibilities of improvement for these banks are quite high. The proposed models give more information, for ex: Bank BoI is efficient in the first stage, in which the first stage's inputs are fully utilized to produce the first stage's outputs (intermediates), but inefficient in the second stage, in which the second stage's inputs (intermediates) are not fully utilized to produce the final outputs, resulting in overall inefficiency. As a result, the AB is not effectively utilizing intermediates, although this cannot be determined using the black-box model.
\subsection{Results Analysis}
 A case study on Indian banks is discussed in the preceding section. This part evaluates and contrasts the efficiency of the crisp/nominal and uncertain models of both stages of the robust SBM model. In the robust SBM model, we obtained three different models' efficiencies. Various forms of uncertainty sets, such as ellipsoidal, polyhedral, and budget uncertainty sets, distinguish these models.



Figure-\ref{figR4} provides a comparison between efficiencies for crisp and robust SBM models in the first stage. 
		\begin{figure}[!htb]
		    \centering
		    \includegraphics[width=17cm, height=6cm]{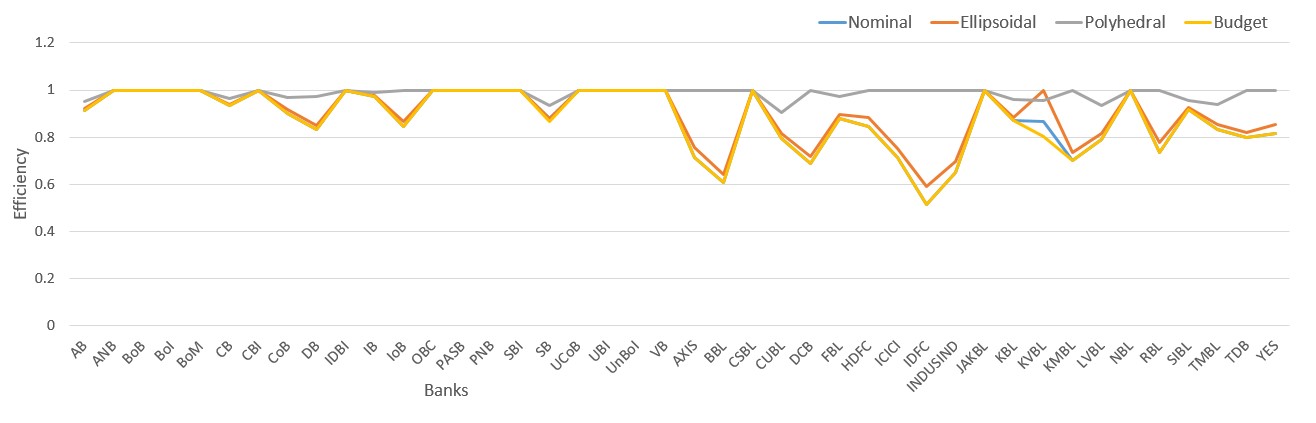}
		    \caption{Comparison of the first stage efficiencies for all the banks}
		    \label{figR4}
		\end{figure}

In this figure, it can be easily seen that no model provides an inefficient solution in any of the instances. In the robust SBM model, for the polyhedral case (i.e., when the data information is given a polyhedral uncertainty set), a maximum of 29 banks give efficient solutions. In contrast, in the case of nominal, ellipsoidal, and budget, a total of 17, 18, and 17 banks, respectively, provides an efficient solution. Furthermore, there are few banks (IoB, AXIS, BBL, DCB, HDFC, ICICI, IDFC, INDUSIND, KMBL, RBL, TDB, YES) whose efficiency lies in $(0,1)$ in case of nominal, ellipsoidal, and budget, whereas in the case of the polyhedral same banks become efficient. In the case of nominal, polyhedral, and budget,  KVBL bank efficiency lies in $(0,1)$, but it becomes efficient in the case of the ellipsoidal. The efficiency of the remaining banks is either lies in $(0,1)$ or the model becomes efficient. The lowest efficiencies of all the models in the first stage SBM model are presented in the following Table-\ref{mod.1tab1}:

\begin{table}[h!]
    \centering
    \begin{tabular}{ccc}
    \hline
       Model  &  Bank & Lowest efficiency \\\hline
       Nominal/Crisp  & IDFC  & 0.514533\\
       Ellipsoidal    & IDFC  & 0.593277\\
       Polyhedral     & CUBL  & 0.905729\\
       Budget         & IDFC  & 0.514534\\
       \hline
    \end{tabular}
    \caption{Lowest efficiencies in the first stage SBM model}
    \label{mod.1tab1}
\end{table}		
Therefore, in the first stage of the SBM model, the possibilities of improvement for these banks are quite high.

	Figure-\ref{figR5} compares efficiency in the second stage SBM model for all of the scenarios mentioned above.

\begin{figure}[!htb]
		    \centering
		    \includegraphics[width=15cm, height=6cm]{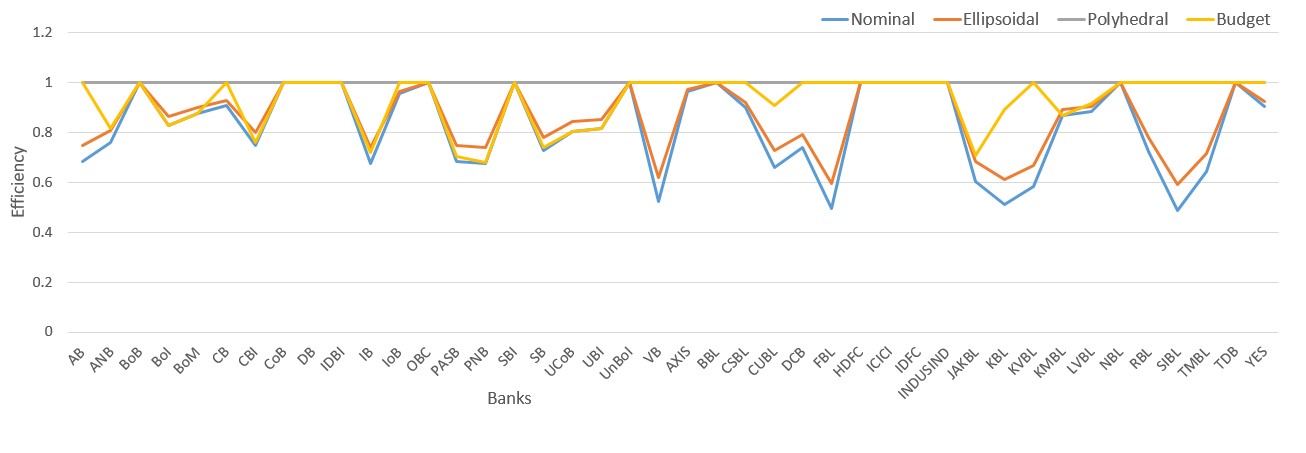}
		     \caption{Comparison of the second stage efficiencies for all the banks}
		     \label{figR5}
		\end{figure}
	In this figure, there isn't a single model that offers an inefficient solution. In the robust SBM model, for the polyhedral case, all the banks give efficient solutions, while in the case of nominal, ellipsoidal, and budget, a total of 14, 14, and 27 banks, respectively, provide the efficient solution. Further, there are also few banks (AB, CB, IoB, VB,  AXIS,  CSBL, DCB, FBL, KVBL, RBL, SIBL, TMBL, YES) whose efficiency lies in $(0,1)$ in the case of nominal and ellipsoidal, whereas in the case of the budget similar banks become efficient. The remaining banks' efficiency lies in $(0,1)$, or the banks become efficient. The lowest efficiencies of all the models, in this case, are presented in the following Table-\ref{mod.1tab2}:			
 \begin{table}[!htb]
    \centering
    \begin{tabular}{ccc}
    \hline
       Model  &  Bank & Lowest efficiency \\\hline
       Nominal/Crisp  & SIBL  & 0.486904\\
       Ellipsoidal    & SIBL  & 0.590099\\
       Budget         & PNB  & 0.67991\\
       \hline
    \end{tabular}
    \caption{Lowest efficiencies in the second stage SBM model}
     \label{mod.1tab2}
\end{table}

	In addition, Figure-\ref{figR6} compares the overall efficiency of crisp and robust SBM models.

	\begin{figure}[!htb]
		    \centering
		    \includegraphics[width=17cm, height=6cm]{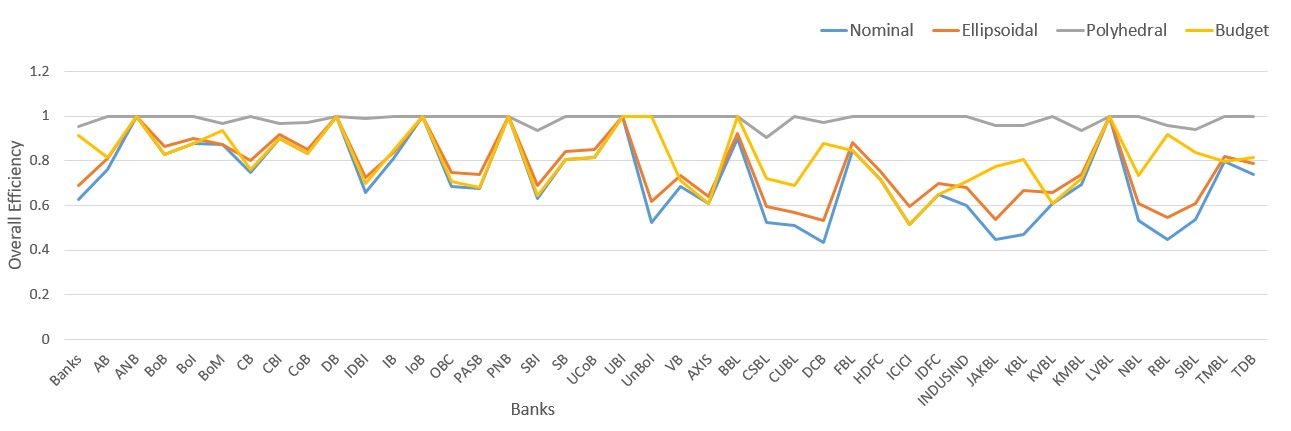}
		    \caption{Comparison of the overall efficiencies for all the banks}
		    \label{figR6}
		\end{figure}
		From this figure, it can be easily seen that no model provides an inefficient solution. In the robust SBM model, for the polyhedral case, a maximum of 29 models give efficient solutions. In contrast, in the case of nominal, ellipsoidal, and budget, a total of 6, 6, and 8 banks, respectively, provides an efficient solution. In addition, there are 21 banks (ANB, BoI, BoM, CBI, IoB, PASB, PNB, UCoB, UBI, AXIS, BBL, DCB,  HDFC, ICICI, IDFC, INDUSIND, JAKBL, KMBL, RBL, TDB, YES) whose efficiency lies between 0 to 1 in the case of nominal, ellipsoidal, and Budget, whereas in the case of the polyhedral similar banks become efficient. Further, in two banks, namely VB and CSBL, whose efficiency lies in $(0,1)$ in the case of nominal and ellipsoidal. But, in the case of the polyhedral and budget, similar banks become efficient. The remaining banks' efficiency either lies between the interval [0, 1] or the model becomes efficient. The lowest efficiencies of all the models, in this case, are presented in the following Table-\ref{mod.1tab3}:
\begin{table}[h!]
    \centering
    \begin{tabular}{ccc}
    \hline
       Model  &  Bank & Lowest efficiency \\\hline
       Nominal/Crisp  & DCB  & 0.433851\\
       Ellipsoidal    & DCB  & 0.533932\\
       Polyhedral    & CSBL  & 0.905729\\
       Budget         & ICICI  & 0.514534\\
       \hline
    \end{tabular}
    \caption{Lowest efficiencies in the second stage SBM model}
     \label{mod.1tab3}
\end{table}	

After reviewing all of the models' efficiencies, one can say that small perturbations in the data could heavily affect the feasibility and efficiencies of the models.

To evaluate the statistical significance of the results for all the models, we apply the  \cite{friedman1937use} rank test. Crisp, ellipsoidal, polyhedral, and budget efficiencies are used to calculate the Friedman test results. In terms of efficiency measures, the Friedman test has three degrees of freedom. The asymptotic p-value is 0.000, which is less than the significance level (0.05), allowing us to reject the null hypothesis (the performance of all models is equal). The performance of all the models can then be stated to be significantly different.
In a nutshell, Banks BoB, OBC, SBI, UnBoI, NBL are the most competitive and productive among
all banks. We would say that the proposed method's evaluation results are
more appropriate, accurate, and true to reality. According to the above discussion, some
robust two-stage SBM virtues are (i) it can deal with two-stage SBM with robust data; (ii) it can deal with robust two-stage SBM with missing, negative and undesirable data; (iii) it can deal with uncertain information in bank problems. So, a robust two-stage SBM is more appropriate to deal with some bank problems under uncertain situations.

\section{Conclusion}
The data in network SBM models are often used as precise, and this study attempted to alleviate network SBM models to uncertain situations. We developed the robust network SBM model in the presence of robust perturbation variables. The proposed robust two-stage SBM models assess the performance of the DMUs with negative, missing, and undesirable data. Since the data information is considered three distinct types of uncertainty sets, ellipsoidal, polyhedral, and budget uncertainty, the max-min approach is used to reformulate into the tractable forms of both stage SBM model. Further, we calculated the efficiencies for the first and second stages of each DMU. Then, overall efficiencies are calculated using first and second-stage efficiencies for ellipsoidal, polyhedral, and budget uncertainty perturbation. Finally, we applied the proposed methods to the Indian Banking sector to illustrate the applicability and efficacy of the developed method in the presence of robust data. The findings suggest that the efficiency values of the Indian banks range from 0.4338 to 1 implying that, the banking sector is generally inefficient. There would be a great perspective to optimize the production process further to improve the banking sector efficiency in India. The developed evaluation method can aid decision-makers substantially in enhancing the operational effectiveness and efficiency of the Indian banking sector. Based on robust optimization techniques, this paper suggested robust two-stage SBM models with a specific emphasis on assessing relative efficiency and improving decision-making for deciding effective combinations of computational operators.

  In future research, it is worthwhile to explore this study to a distributionally robust network SBM where the information of the data is given in an ambiguity set, i.e., a set of the family of distributions.

\section*{CRediT authorship contribution statement}
Both authors contributed equally to this research.

\bibliographystyle{unsrtnat}
\bibliography{references}  






\end{document}